\date{}
\def\begen{\begin{enumerate}[align=left,leftmargin=*,label=(*)]}
\def\begena{\begin{enumerate}[align=left,leftmargin=*,label=(\alph*)]}
 \def\enden{\end{enumerate}}
 \def\begit{\begin{itemize}[align=left,leftmargin=*]}
  \def\endit{\end{itemize}}
\newcommand{\leqnomode}{\tagsleft@true}
\newcommand{\reqnomode}{\tagsleft@false}
         \def\nrs{;\\ \noalign{\medskip}}
\def\nr{\nn\\}
\def\ser#1#2{{#1}_{#2}(\vec{x})}
\def\im#1{{\rm Im}(#1)}
\def\C{\mathbb{C}}
\def\Z{\mathbb{Z}}
\def\N{\mathbb{Z}_{\ge0}}
\def\ds{\displaystyle}
\def\nn{\nonumber}
\def\ds{\displaystyle}
\def\gg#1{\Gamma(\displaystyle{#1})}
\def\nn{\nonumber}
\def\Z{{\mathbb Z}}
\def\C{{\mathbb C}}
\def\vx{\vec{x}}
\def\vw{\vec{w}}
\newcommand{\JJ}{J}
\newcommand{\LL}{L}
\newcommand{\MM}{M}
\newtheorem{Theorem}{Theorem}[section]
\newtheorem{Proposition}[Theorem]{Proposition}
\newtheorem{Lemma}[Theorem]{Lemma}
\theoremstyle{definition}
\newtheorem{Definition}[Theorem]{Definition}
\theoremstyle{definition}
\newtheorem{Correspondence}[Theorem]{Correspondence}
\theoremstyle{remark}
\newtheorem{Remark}[Theorem]{Remark}
\numberwithin{equation}{section}
\title{Coxeter group actions and limits of hypergeometric series}
\author{R.M. Green \and Ilia D. Mishev \and Eric Stade}
\keywords{Hypergeometric series, Coxeter groups}\subjclass[2010]{33C20, 20F55}
\begin{document}

\begin{abstract}

In this paper, we use combinatorial group theory and a limiting process to connect various types of  hypergeometric series, and of relations among such series.

We begin with a set $S$ of 56 distinct translates of a certain function $M$, which takes the form of a Barnes integral, and is expressible as a sum of two very-well-poised $_9F_8$ hypergeometric series of unit argument. We consider a known, transitive action of the Coxeter group $W(E_7)$ on this set. We show that, by removing from $W(E_7)$ a particular generator, we obtain a subgroup that is isomorphic to $W(D_6)$, and that acts intransitively on $S$, partitioning it into three orbits, of sizes 32, 12, and 12 respectively.

Taking certain limits of the $M$ functions in the first orbit yields a set of 32 $J$ functions, each of which is a sum of two Saalsch\"utzian $_4F_3$ hypergeometric series of unit argument.  The original action of $W(D_6)$ on the $M$ functions in this orbit is then seen to correspond to a known action of this group on this set of $J$ functions. 

In a similar way, the image of each of the size-12 orbits, under a similar limiting process, is a set of 12 $L$ functions that have been investigated in earlier works.  In fact, these two image sets are the same. 

The limiting process is seen to preserve distance, except on pairs consisting of one $M$ function from each size-12 orbit.

Finally, each known three-term relation among the $J$ and $L$ functions is seen to be obtainable as a limit of a known three-term relation among the $M$ functions. 

\end{abstract}

\maketitle

\section{Introduction}

The  hypergeometric series
\eqnarray \label{fdef} &&F(a,b;c;z) =1+\frac{a\,b}{1! c}\,z\nn\\&&+\frac{a(a+1)b(b+1)}{ 2!  c(c+1)}\,z^2
+\frac{a(a+1)(a+2)b(b+1)(b+2)}{3! c(c+1)(c+2)}\,z^3+\cdots\endeqnarray was introduced and studied by Gauss \cite{Ga} in 1821. Gauss derived many properties of this series,  and demonstrated its relationship to a wide array of elementary and special functions.  The ``Gauss function,'' as \eqref{fdef} is now known,  quickly became ubiquitous in mathematics and the physical sciences.

 The theory of {\it generalized hypergeometric series} -- series like (\ref{fdef}), but with arbitrary numbers of  numerator  and denominator  parameters -- began to take form in the latter part of the 1800's.  (The series (\ref{fdef}) has two numerator parameters, $a$ and $b$, and one denominator parameter, $c$.)  From this period through the early part of the 1900's,   properties of -- and, especially, relations among -- these generalized series were studied extensively.  (See \cite{Ba}, \cite{Bar1}, \cite{Bar2}, \cite{T},  \cite{Wh1}, \cite{Wh2}, and \cite{Wh3}, to name just a few.)

More recently, generalized hypergeometric series -- particularly those with {\it unit argument}, meaning $z=1$ -- have figured prominently in various other contexts.  For example, these series are relevant  to atomic and molecular physics: in the calculation of multiloop Feynman integrals (see \cite[Chapters 8, 9, and 11]{Groz}); as $3$-$j$ and $6$-$j$ coefficients in angular momentum theory (cf. \cite[Sections 2.7 and 2.9]{Dra}); and so on. These series have also proved quite fundamental to the theory of automorphic forms.  Among other things, they arise frequently in explicit calculations of archimedean zeta integrals for automorphic $L$ functions.  See, for example, the pioneering work of Bump \cite{Bu}, and such subsequent works as \cite{St1}, \cite{St2}, \cite{St3}, \cite{St4}, and \cite{ST}.

Generalized hypergeometric series have become sufficiently universal that it is now typical to drop the adjective ``generalized,'' and we will follow this convention from now on.

Recently, combinatorial group theory has been applied quite successfully to the study of {relations} -- especially two-term and three-term relations --  among hypergeometric series of unit argument (and related entities, such as basic hypergeometric series).  See, for example, \cite{BLS}, \cite{BRS}, \cite{GMS}, \cite{LV1}, \cite{LV2}, \cite{M1}, and \cite{R}.
A number of these relations had been examined much earlier, by Thomae \cite{T}, Whipple \cite{Wh1}, \cite{Wh2}, \cite{Wh3},  and others.   But the group-theoretic framework for them was new.  This framework further served to elucidate a variety of new two-term and three-term relations, and  to provide a cohesive framework binding these new relations to the more classical ones.

It is such a framework that we wish to expand upon in the present work.  Here, we will use Coxeter group theory, together with a certain limiting process, to connect hypergeometric series of different orders. (The order of a hypergeometric series is the pair $(p,q)$ indicating the number of numerator and denominator parameters, respectively, of the series.)

Specifically:  we begin with a certain Barnes integral $M(\vec{w})$, expressible as a linear combination of two very-well-poised ${}_9F_8$ hypergeometric series of unit argument.  (By ``${}_9F_8$ hypergeometric series,'' we mean one with nine numerator parameters and eight denominator parameters. The meaning of ``very-well-poised,'' and of various other  technical terms introduced in this section, will be explained in the next section.) Here, $\vw$ is a vector belonging to a certain affine hyperplane $W$ in $\C^8$
(see (\ref{Wdef}) for the definition of $W$). 
We recall, as is described in \cite{LV2} and  \cite{Roy}, that two-term and three-term relations involving the function $M$ can be described in terms of the transitive action of a certain Coxeter group $H$, isomorphic to the Weyl group $W(E_7)$, on the parameters of $M$.  In particular,

\begit

\item (two-term relations) $M(\tau \vw)=M(\vw)$ for all $\tau$ in a certain subgroup $G$ of $H$, isomorphic to the Weyl group $W(E_6)$;
\item (three-term relations) If $\sigma$, $\tau$, and $\mu$ are in distinct right cosets of $G$ in $H$, then there is a linear relation among $M(\sigma \vw)$, $M(\tau\vw)$, and $M(\mu\vw)$.  (The coefficients in this relation are rational combinations of gamma functions, whose arguments depend on $\vw$.)

\endit

We show that a certain subgroup $Q$ of $H$, isomorphic to the Weyl group $W(D_6)$ (and obtained through ``removal'' of one of the generators of $H$), acts {\it intransitively} on the set $\{M(\tau \vw)\colon \tau\in H\}$, and in fact partitions this set into three orbits.  The action of $Q$ on each of these orbits is then seen to mirror known actions of $W(D_6)$ on certain combinations, denoted $J$ and $L$, of Saalsch\"utzian ${}_4F_3$ hypergeometric series, of unit argument.  Moreover, these $J$ and $L$ functions may be obtained from the $M$ function by letting certain parameters of the latter become infinite. This limiting process further transforms $M$-function relations into relations among $J$ functions, relations among $L$ functions, or ``mixed'' relations entailing both $J$ and $L$ functions,  depending on the orbits in which the original $M$ functions sit.  Finally, the limiting process preserves certain prescribed metrics on the relevant spaces of functions, except in certain cases where two $M$ functions  of   different ``colors'' are both mapped to $L$ functions.

The structure of our paper is as follows.  In Section 2, we recall basic definitions and formulas for  hypergeometric series and Barnes integrals in general, and for the particular functions $M$, $J$, and $L$ referenced above.   In Section 3 we assemble some results, previously developed in \cite{Roy}, concerning the action of the Coxeter group $H\cong W(E_7)$ on sets of $M$ functions.  In Section 4 we recall similar results, previously obtained in \cite{FGS}, \cite{GMS}, and \cite{M1}, for the functions $\JJ(\vec{x})$ and 
$\LL(\vec{x})$.  In Section 5, we develop a correspondence between the $M$ functions  of Section 3 and the $J/L$ functions  of Section 4, by invoking Coxeter group theory.  In Section 6, we show that the correspondence of Section 5 preserves certain metrics, except in some cases where two $M$ functions both map to $L$ functions.  In Section 7, we show that the correspondence of Section 5 can also be obtained by taking certain limits of the $M$ functions. In Section 8, we show that our correspondence not only maps $M$ functions to $J/L$ functions, but also takes known three-term relations among the former to analogous, known relations among the latter. Finally, in our Appendix, we provide an explicit characterization of our correspondence.\label{sec1}

\section{Basic definitions and notation}
\label{sec2}

If $f$ is a function of a single complex variable, we will use the following 
shorthand notations:
\begin{equation}
\label{shorthand1}
f[x_1,x_2,\ldots,x_n]
=\prod_{i=1}^{n}f(x_i),
\end{equation}
\begin{equation}
\label{shorthand2}
f\bigg[
{x_1,x_2,\ldots,x_n
\atop
y_1,y_2,\ldots,y_m}
\bigg]
=\frac{f[x_1,x_2,\ldots,x_n]}
{f[y_1,y_2,\ldots,y_m]}.
\end{equation}We also introduce the following notation for finite sequences of complex numbers:
\begin{equation}x\pm(x_1,x_2,\ldots,x_n)=x\pm x_1, \,x\pm x_2,\,\ldots ,\,x\pm x_n,\label{shorthand3}\end{equation}and 
\begin{align}x\pm((x_1,x_2,\ldots,x_n))=&x\pm x_1\pm x_2,\, x\pm x_1\pm x_3,\,\ldots ,\,x\pm x_1\pm x_n,\nr&x\pm x_2\pm x_3,x\pm x_2\pm x_4,\,\ldots 
,\,x\pm x_{n-1}\pm x_n,\label{shorthand4}\end{align}where the $\pm$ will either always denote $+$ or
always denote $-$ in each of its appearances in either
(\ref{shorthand3}) or (\ref{shorthand4}).  So, in particular, we have
\begin{equation}
\label{shorthand5}
f[x\pm(x_1,x_2,\ldots,x_n)]
=\prod_{i=1}^{n}f(x\pm x_i),
\end{equation}
and
\begin{equation}
\label{shorthand6}
f[x\pm((x_1,x_2,\ldots,x_n))]
=\prod_{1\leq i < j \leq n}f(x\pm x_i \pm x_j).
\end{equation}

Next, let $a \in \mathbb{C}$ and $n\in\N$. The Pochhammer symbol
$(a)_n$ is defined by
\begin{equation*}
(a)_n=\left\{ \begin{array}{ll}
a(a+1)\cdots(a+n-1)  & \text{if }n>0,\\
1, &\text{if } n=0.
\end{array} \right.
\end{equation*}
From the functional equation
$s \Gamma(s)=\Gamma(s+1)$ for the gamma function,
we have 
$$(a)_n=\frac{\Gamma(a+n)}{\Gamma(a)},$$
provided
$a \notin \{0, -1, -2,\ldots\}$. 

Let $p$ and $q$ be nonnegative integers and let
$a_1,a_2,\ldots,a_p,b_1,b_2,\ldots,b_q, z
\in \mathbb{C}$.
The hypergeometric series of type ${}_pF_q$ is defined by
\begin{equation*}
\label{210}
{}_pF_q \left[ \genfrac{} {}{0pt}{}{a_1,a_2,\ldots,a_p;}{
 b_1,b_2,\ldots,b_q;} z\right] =
\sum_{n=0}^{\infty} \frac{(a_1)_n(a_2)_n \cdots
(a_p)_n}{n!(b_1)_n(b_2)_n \cdots (b_q)_n}z^n.
\end{equation*}
 To avoid poles, we assume that no denominator parameter
$b_1,b_2,\ldots,b_q$ is a negative integer or zero. If a numerator
parameter $a_1,a_2,\ldots,a_p$ is a negative integer or zero, the
series 
{\it terminates}, as it has only finitely many nonzero terms.
When $z=1$, we say that the series is of {\it unit argument}, or {\it of type}
${}_pF_q(1)$.

In this paper, we will be interested in the case 
$p=q+1$.
The series of type
${}_{q+1}F_q$ converges absolutely if $|z|<1$, or if
$|z|=1$ and $\textrm{Re}(\sum_{i=1}^qb_i-\sum_{i=1}^{q+1}a_i)>0$ (see
\cite[p.\ 8]{Ba}).
If $\sum_{i=1}^qb_i-\sum_{i=1}^{q+1}a_i=1$, the
series is called {\it Saalsch\"utzian}, or {\it balanced}. If
$1+a_1=b_1+a_2=\cdots=b_q+a_{q+1}$, the series is called 
{\it well-poised}.
A well-poised series such that $a_2=1+\frac{1}{2}a_1$ is called
{\it very-well-poised}.

Let $\vec{x}=(A,B,C,D,E,F,G)^T$ belong to the affine hyperplane
\begin{equation}V=\{(A,B,C,D,E,F,G)^T \in \mathbb{C}^7:
E+F+G-A-B-C-D=1\}.\label{vdef}\end{equation}
The function $\JJ(\vec{x})=\JJ(A;B,C,D;E,F,G)$ 
studied in \cite{FGS,GMS}
is defined by
\begin{align}\label{Jdef}
 \JJ(\vec{x}) =&\frac{{}_4F_3\left[ \genfrac{} {}{0pt}{} {\displaystyle A,B,C,D;}{\ds
E,F,G;}1\right]}{\sin\pi A\,\Gamma[ 1+A-(1,E,F,G),E,F,G]} \nr+\,&\frac{{}_4F_3\left[  \genfrac{} {}{0pt}{}{\displaystyle A,1+A-E,1+A-F,1+A-G;}{
\displaystyle 1+A-B,1+A-C,1+A-D;}1\right]} {\sin\pi A\,\Gamma[A,B,C,D,1+A-(B,C,D)]}
\nr=\,&\frac{1}{\sin\pi A\,\Gamma[A,B,C,D,1+A-(1,E,F,G)]}\nr
\times\,&\biggl({_4}F_3^* \biggl[ \genfrac{} {}{0pt}{}{A,B,C,D;}{ E,F,G;}1\biggr] +{_4}F_3^* \biggl[ \genfrac{} {}{0pt}{}{A,1+A-E,1+A-F,1+A-G;}{ 1+A-B,1+A-C,1+A-D;}1\biggr]\biggr),
\end{align}
and the function $\LL(\vec{x})=\LL(A,B,C,D;E;F,G)$ studied in
\cite{GMS,M1} is defined by
\begin{align}\label{Ldef}
 \LL(\vec{x})=&\frac{{}_4F_3\left[  \genfrac{} {}{0pt}{}{\displaystyle A,B,C,D;}{ \displaystyle
E,F,G;}1\right]}{\sin \pi E\,\Gamma[1-E+(A,B,C,D),E,F,G]} \nr-\,&\frac{{}_4F_3\left[  \genfrac{} {}{0pt}{}{\displaystyle 1+A-E,1+B-E,1+C-E,1+D-E;}{
\displaystyle 2-E,1+F-E,1+G-E;}1\right]} {\sin \pi E\,
\Gamma[A,B,C,D,1-E+(1,F,G)]}
\nr=\,&\frac{1}{ { \sin\pi E\,\Gamma[A,B,C,D,1-E+(A,B,C,D)]} }\biggl({_4}F_3^* \biggl[ \genfrac{} {}{0pt}{}{A,B,C,D;}{ E,F,G;}1\biggr]\nr& -{_4}F_3^* \biggl[ \genfrac{} {}{0pt}{}{\displaystyle 1+A-E,1+B-E,1+C-E,1+D-E; }{
\displaystyle 2-E,1+F-E,1+G-E;}1\biggr]\biggr),
\end{align}
where by definition \begin{align*}
{_4}F_3^* \biggl[ \genfrac{} {}{0pt}{}{A,B,C,D;}{ E,F,G;}1\biggr]=\,& \Gamma\biggl[{A,B,C,D\atop
 E,F,G}\biggr]
{{_4}F_3 \biggl[ \genfrac{} {}{0pt}{}{A,B,C,D; }{ E,F,G;}1\biggr]}\nr=\,&\sum_{k=0}^\infty \frac{1}{k!}\Gamma
\biggl[{k+A,k+B,k+C,k+D\atop 
 k+E,k+F,k+G}\biggr].  \end{align*} 
\begin{Remark}
\label{R210}  
We have
\begin{equation}
\label{JandK}
\JJ(\vec{x})=
\frac{1}{\sin \pi A \, \Gamma(A)}
K(\vec{x}),
\end{equation}
where $K(\vec{x})$ is the function considered in \cite{FGS}.  This renormalization allows
for greater simplicity and uniformity of the three-term relations studied in \cite[Sections 5 and 6]{GMS}.  
We also note that, because all invariances, or two-term relations, for $K$ described in \cite{FGS} preserve the first coordinate  of $(A,B,C,D,E,F,G)^T$, such invariances continue to hold for the function $\JJ$.
\end{Remark}

Alternative notation used for 
$\JJ(A;B,C,D;E,F,G)$ and $\LL(A,B,C,D;E;F,G)$ will be
$\JJ\biggl[{\displaystyle{A;B,C,D;}\atop \displaystyle{E,F,G}}\biggr]$
and
$\LL\biggl[{\displaystyle{A,B,C,D;}\atop \displaystyle{E;F,G}}\biggr]$,
respectively.

The two Saalsch\"utzian ${}_4F_3(1)$ series in 
the definition of the $\JJ$ function are called 
{\it complementary} with respect 
to the parameter $a$, and the two 
Saalsch\"utzian ${}_4F_3(1)$ series in 
the definition of the $\LL$ function are called 
{\it supplementary} with respect 
to the parameter $e$. 

We note that, by \cite[Eq.\ $(7.5.3)$]{Ba}, the $\LL$ function can be
expressed as a very-well-poised ${}_7F_6(1)$ series:
\begin{align}
\label{eq240}
&\LL(A,B,C,D;E;F,G) \nr
\quad&=\frac{\Gamma(1+a)}{ 
\pi\, \Gamma[1+a-(b,c,d,e,f),2+2a-b-c-d-e-f]} \nr
 \times\,& {}_7F_6\left[ \genfrac{} {}{0pt}{}{ a,1+a/2,b,c,d,e,f;
}{ 
a/2,1+a-(b,c,d,e,f);}1\right],
\end{align}
where
$$\displaylines{
a=D+G-E, \quad b= G-A, \quad c=G-B , \cr
d= G-C, \quad e=D , \quad f=1+D-E,}$$
and we require that 
$\textrm{Re}(2+2a-b-c-d-e-f)=\textrm{Re}(F-D)>0$.
The above representation of the $\LL$ function can also
be written as
\begin{equation*}
\LL(A,B,C,D;E;F,G)=\frac{\psi[a;b,c,d,e,f]}{\pi}, 
\end{equation*}
where $\psi$ is the function defined in \cite[Eqs.\ (2.1) and (2.11)]{Wh3}.
Thus, the results concerning the
$\LL$ function can also be interpreted in terms of the very-well-poised
${}_7F_6(1)$ series given in (\ref{eq240}). However, 
we primarily view the $\LL$ function as a linear combination of
two Saalsch\"utzian ${}_4F_3(1)$ series, because this representation
connects the $\LL$ function more directly  to the $\JJ$ function, which is also defined
as such a linear combination.
Further, the above very-well-poised ${}_7F_6(1)$ series converges only for appropriate values of the parameters,
while the Saalsch\"utzian condition $E+F+G-A-B-C-D=1$ guarantees the
convergence of both ${}_4F_3(1)$ series in the definition of the
$\LL$ (and the $\JJ$) function.

Next, let 
 $\vec{w}=(a,b,c,d,e,f,g,h)^T$ belong to the  affine hyperplane
\begin{align}&W=\{(a,b,c,d,e,f,g,h)^T \in \mathbb{C}^8:
2+3a=b+c+d+e+f+g+h\}.\label{Wdef}\end{align}The function $\MM(\vec{w})=\MM(a;b;c,d,e,f,g,h)$ 
studied in \cite{Roy}
is defined by
\begin{align}\label{Mdef}
 &\MM(\vec{w})\nr&=\frac
 {  V(a;b,c,d,e,f,g,h)  -V(2b-a;b,b-a+(c,d,e,f,g,h)
 )} 
 {\sin\pi(b-a)\,
 \Gamma[b,c,d,e,f,g,h,b-a+(c,d,e,f,g,h)]},
\end{align}
where  \begin{align*}
  & V(a;b,c,d,e,f,g,h) 
 = \frac{\pi}{2}
  \Gamma\biggl[{ 1+a,b,c,d,e,f,g,h\atop 1+a-(b,c,d,e,f,g,h)}\biggr]\nr
 &\times   
 {}_9F_8\left[  \genfrac{} {}{0pt}{}
 {\displaystyle a,1+a/2,b,c,\ldots,h;}
 { \displaystyle
 a/2,1+a-b,1+a-c,\ldots,1+a-h;}1\right].
 \end{align*} 

\begin{Remark}
\label{R220}  
The function $\MM$ is called $J$ in \cite{Roy}.
We use the notation $\MM$ here to distinguish from
the function $\JJ(\vec{x})$ defined in
(\ref{Jdef}).
\end{Remark}

Another notation used for 
$\MM(a;b;c,d,e,f,g,h)$ will be
$\MM\biggl[{\displaystyle{a;b;c,d,}\atop \displaystyle{e,f,g,h}}\biggr]$.

The two very-well-poised ${}_9F_8(1)$ series in 
the definition of the $\MM$ function are called 
{\it complementary} with respect 
to the parameter $b$.

In our limit derivations, 
we will make use of Barnes integrals. A Barnes
integral is a contour integral of the form
\begin{equation}
\label{BarInt} \int_t \prod_{i=1}^n\Gamma^{\epsilon_i}(a_i+t)
\prod_{j=1}^m\Gamma^{\epsilon_j}(b_j-t) \, dt,
\end{equation}
where $n,m \in \mathbb{N}; \epsilon_i,\epsilon_j=\pm 1;$ and
$a_i,b_j,t \in \mathbb{C}$. The path of integration is the imaginary
axis, indented, if necessary, to assure that all poles of
$\prod_{i=1}^n\Gamma^{\epsilon_i}(a_i+t)$ are to the left of the
contour and all poles of $\prod_{j=1}^m\Gamma^{\epsilon_j}(b_j-t)$
are to the right of the contour. This path of integration always
exists, provided that, for $1 \leq i \leq n$ and $1 \leq j \leq m$,
we have $a_i+b_j \notin \{0,-1,-2,\ldots\}$ whenever
$\epsilon_i=\epsilon_j=1$.  (An integral like \eqref{BarInt} is also sometimes called a {\it Meijer's $G$-function}.)

From now on, when we write an integral of the form
(\ref{BarInt}), we will always mean a Barnes integral
with a path of integration as just described.

The functions $\JJ(\vec{x}), \LL(\vec{x})$, and
$\MM(\vec{w})$ can be written in terms of Barnes integrals.
It is shown in \cite{FGS}, \cite{M1}, and
\cite{Roy} that
\begin{align}
\label{JBarInt}
&\JJ(A;B,C,D;E,F,G)\nr
&=\frac{1}
{\sin\pi A \, \Gamma[A,1+A-(E,F),B,C,E-(B,C),F-(B,C),G-D]}\nr
&\times \frac{1}{2\pi i}
\int_t 
\Gamma\biggl[{t+G-D,t+1+A -(E,F),
-t+D-G+(B,C),-t\atop -t+D,t+1+A-D}\biggr] \, dt,
\end{align}
\begin{align}
\label{LBarInt}
&\LL(A,B,C,D;E;F,G)=\frac{1}
{\pi \, \Gamma[A,B,C,D,1-E+(A,B,C,D)]}\nr
&\times \frac{1}{2\pi i}
\int_t 
\Gamma\biggl[{ t+(A,B,C,D),
-t+1-E,-t \atop t+(F,G)}\biggr] \, dt,
\end{align}
and
\begin{align}\label{MBarInt}
&\MM(a; b; c, d, e, f, g, h) =  \frac{1} {\Gamma[b,c,d,e,f,g,h,b-a+(c,d,e,f,g,h)]}
\nr&\times  \frac{1}{2\pi i}  
\int_t \Gamma\biggl[{t+a,t+1+a/2, t+(b,c,d,e,f,g,h), -t+b-a, -t \atop t+a/2 , 
t+1+a -(c,d,e,f,g,h)}\biggr] \,dt,
\end{align}
respectively.

In our limit derivations, we will frequently
make use of the extension of
Stirling's formula to the complex numbers
(see \cite[Section 4.42]{Titch} or
\cite[Section 13.6]{WhitWat}):
\begin{equation}
\label{Str}
\Gamma(a+z)
=\sqrt{2\pi}
z^{a+z-1/2}
e^{-z}
(1+\mbox{O}(1/|z|))
\mbox{ uniformly as }
|z|\to\infty,
\end{equation}
provided that
$-\pi+\delta \leq \mbox{arg}(z) \leq\pi-\delta$ for some fixed $ 
\delta \in (0,\pi)$.

When simplifying expressions involving
gamma functions, we will often use the
recursion and reflection formulas:
\begin{equation}
\label{gammaref}\gg{s+1}=s\gg{s};\qquad
\Gamma(s)\Gamma(1-s)
=\frac{\pi}{\sin \pi s}.
\end{equation}

\section{The function  $M(\vw)$: Coxeter groups, actions, and orbits}
\label{sec3}
 
In this section, we review some results 
concerning the function $M(\vw)$ (cf. \eqref{Mdef}); all of these results may be found in
 \cite{Roy}.

To this end, we first recall some notions from Coxeter group theory.  The Dynkin diagram of the Coxeter group $W(E_7)$ is given by
the graph with vertices labeled $1,2,3,4,5,6,3'$, where
$i,j \in \{1,2,3,4,5,6\}$ are connected by an edge if and only if
$|i-j|=1$, and $3'$ is connected to $4$ only. The presentation
of $W(E_7)$ is given by
\begin{equation}W(E_7)=\langle s_1,s_2,s_3,s_4,s_5,s_6,s_{3'}:(s_i s_j)^{m_{ij}}=1 \rangle,\label{wd7def}\end{equation}
where $m_{ii}=1$ for all $i$; and for $i$ and $j$ distinct, $m_{ij}=3$
if $i$ and $j$ are connected by an edge, and $m_{ij}=2$ otherwise.
It is well-known that the order of $W(E_7)$ is
$72\times 8! = 2903040$ (see \cite[Section $2.11$]{Hum}).

The Dynkin diagram of the Coxeter group $W(E_6)$ is given
by the same graph as the Dynkin diagram of the Coxeter group $W(E_7)$,
except that we omit vertex $1$ (and any connection to it). The presentation
of $W(E_6)$ is the same as the presentation
of $W(E_7)$, except that we omit the generator $s_1$ (and any relations involving $s_1$).
The order of $W(E_6)$ is
$72\times 6! = 51840$ (see \cite[Section $2.11$]{Hum}).

We consider the matrix group $GL(8,\mathbb{C})$ 
acting on the complex vector space
$\mathbb\C^8$ from the left.  If $\sigma \in S_8$, 
where $S_n$ is the symmetric group on $n$ elements,
we will
identify $\sigma$ with the element of $GL(8,\mathbb{C})$ that permutes
the rows of the identity matrix $I_8 \in GL(8,\mathbb{C})$
according to the permutation $\sigma$. Equivalently,
we identify $\sigma$ with the element of $GL(8,\mathbb{C})$ that permutes
the standard basis 
$\{\vec{e}_1,\vec{e}_2,\ldots,\vec{e}_8\}$ 
of the complex vector space
$\mathbb{C}^8$ according to the permutation $\sigma$.
For example,
\begin{equation*}
(123)=
\left( \begin{array}{cccccccc}
0 & 0 & 1 & 0 & 0 & 0 & 0 & 0\\
1 & 0 & 0 & 0 & 0 & 0 & 0 & 0\\ 
0 & 1 & 0 & 0 & 0 & 0 & 0 & 0\\ 
0 & 0 & 0 & 1 & 0 & 0 & 0 & 0\\ 
0 & 0 & 0 & 0 & 1 & 0 & 0 & 0\\ 
0 & 0 & 0 & 0 & 0 & 1 & 0 & 0\\ 
0 & 0 & 0 & 0 & 0 & 0 & 1 & 0\\  
0 & 0 & 0 & 0 & 0 & 0 & 0 & 1     
\end{array} \right).
\end{equation*}

We also define $X,Y \in GL(8,\mathbb{C})$ to be the matrices
\begin{equation}
\label{Xdef}
X=
\left( \begin{array}{cccccccc}
1/2 & 1/2 & -1/2 & -1/2 & -1/2 & 1/2 & 1/2 & 1/2\\
0 & 1 & 0 & 0 & 0 & 0 & 0 & 0\\ 
-1/2 & 1/2 & 1/2 & -1/2 & -1/2 & 1/2 & 1/2 & 1/2\\ 
-1/2 & 1/2 & -1/2 & 1/2 & -1/2 & 1/2 & 1/2 & 1/2\\ 
-1/2 & 1/2 & -1/2 & -1/2 & 1/2 & 1/2 & 1/2 & 1/2\\ 
0 & 0 & 0 & 0 & 0 & 1 & 0 & 0\\ 
0 & 0 & 0 & 0 & 0 & 0 & 1 & 0\\  
0 & 0 & 0 & 0 & 0 & 0 & 0 & 1         
\end{array} \right)
\end{equation}
and
\begin{equation}
\label{Ydef}
Y=
\left( \begin{array}{cccccccc}
-1 & 2 & 0 & 0 & 0 & 0 & 0 & 0\\
-1 & 1 & 1 & 0 & 0 & 0 & 0 & 0\\ 
0 & 1 & 0 & 0 & 0 & 0 & 0 & 0\\ 
-1 & 1 & 0 & 1 & 0 & 0 & 0 & 0\\ 
-1 & 1 & 0 & 0 & 1 & 0 & 0 & 0\\ 
-1 & 1 & 0 & 0 & 0 & 1 & 0 & 0\\ 
-1 & 1 & 0 & 0 & 0 & 0 & 1 & 0\\  
-1 & 1 & 0 & 0 & 0 & 0 & 0 & 1         
\end{array} \right).
\end{equation}

We note that, if
$\vec{w}=(a,b,c,d,e,f,g,h)^T \in W$ 
(see (\ref{Wdef}) for the definition of the affine hyperplane $W$)
then,
by virtue of the relation $2+3a=b+c+d+e+f+g+h$, 
we have \begin{equation}\label{Xaction}X\vec{w}
=(1+2a-c-d-e,b,1+a-d-e,1+a-c-e,1+a-c-d,f,g,h)^T\end{equation}
and
\begin{equation}\label{Yaction}Y\vec{w}
=(2b-a,b+c-a,b,b+d-a,b+e-a,b+f-a,b+g-a,b+h-a)^T.\end{equation}

We define the subgroup
$H$ of $GL(8,\mathbb{C})$ by
\begin{equation}
\label{HgroupDef}
H 
=\langle
(23),(34),(45),(56),(67),(78),X,Y\rangle. 
\end{equation}

It is shown in \cite{Roy}
that $H$ is isomorphic to $W(E_7)$. 
The Coxeter generators of $H$, cf. \eqref{wd7def}, are given by
\begin{equation}
\label{CoxGenH}
s_1=Y(23),s_2=(34),s_3=(45),s_4=(56),
s_5=(67),s_6=(78),s_{3'}=X.
\end{equation}

It is well-known that the Coxeter group
$W(E_7)$ has a center consisting
of two elements (see \cite[Sections 3.20 and 6.4]{Hum}).
We denote by $Z$ the unique nonidentity element
in the center of $H$. It is shown in \cite{Roy}
that, if
$\vec{w}=(a,b,c,d,e,f,g,h)^T$ is in $W$
(see (\ref{Wdef})), then
\begin{equation}Z\vec{w}
=(1-a,1-b,1-c,1-d,1-e,1-f,1-g,1-h)^T\in W.\label{Zaction}\end{equation}

Let $G$ be the subgroup of $H$ defined by
\begin{equation}
\label{GgroupDef}
G
=\langle
s_2,s_3,s_4,s_5,s_6,s_{3'}
\rangle. 
\end{equation}
Then $G$ is isomorphic to $W(E_6)$.
It is shown in \cite{Roy} that $G$
is an invariance group for the $M$ function. 
That is,$$ M(\alpha\vw)=M(\vw)$$for each $\alpha\in G$, $\vw\in W$. 
Since the order of $G$ is $51840$, we have
$51840$ invariances, or two-term relations (including the trivial one),
satisfied by the $M$ function.

For $\tau\in G\backslash H$, we will write $M_\tau$ to denote the function defined by 
$M_{\tau}(\vw)=M(\alpha\vw)$, where $\alpha\in H$ is any element of the coset $\tau$.

There are 
$|H|/|G|=2903040/51840=56$ elements of the coset space $G\backslash H$. We assign to these elements the symbols $\pm v(i,j)\ (0 \leq i < j \leq 7)$ in the following way:
Let $\vec{w}=(a,b,c,d,e,f,g,h)^T \in W$, and put 
\begin{equation}
x_0=b,x_1=h,x_2=g,x_3=f,x_4=e,x_5=d,x_6=c,x_7=a.\label{x_idefs}
\end{equation}
It follows from the results in \cite{Roy} that:

\begena
\item  For $\alpha,\beta\in H$,  $\alpha\vec{w}$ and
$\beta\vec{w}$ have the same second entry  if and only if $\alpha$ and $\beta$ are in the same right coset of
$G$ in $H$; and 

\item  For any $\alpha\in H$, the second entry of $\alpha\vec{w}$  equals either $x_i+x_j-x_7$ or $1+x_7-x_i-x_j$ for some integers $i$ and $j$ with  $0 \leq i \ne j \leq 7$.
\enden 

We may therefore make the following.

\begin{Definition}\label{vij}We let 
$v(i,j)$ denote the right coset with the property that,
if $\alpha \in v(i,j)$, then
$\alpha \vw$ has the second entry
$x_i+x_j-x_7$.  Also, we let 
$-v(i,j)$ denote the right coset with the property that,
if $\beta \in -v(i,j)$, then
$\beta \vw$ has the second entry
$1+x_7-x_i-x_j$.   \end{Definition}

\begin{Remark}
\label{Mlabels}
(a)
Our indexing of the 56 cosets $\pm v(i,j)$ ($0\le i<j\le 7$) is different from
the one used in \cite{Roy}. We use
a different indexing in this paper in order
to obtain a certain canonical form for the results in Section 5.

(b) If we consider the unique nonidentity matrix $Z$
in the center of $H$, we have that, for
any $0\leq i < j \leq 7$, 
the action of $Z$ by right multiplication 
interchanges the cosets $v(i,j)$ and $-v(i,j)$.   This is clear from the above definitions of $v(i,j)$ and $-v(i,j)$, and from \eqref{Zaction}. 
\end{Remark}

 The natural action of $H$ on  $G \backslash H$ may now be described, in terms of the generators $s_1, s_2,\ldots,s_6,s_{3'}$  of $H$ and the indices $\pm v(i,j)$, as follows.
   
\begin{Proposition} \label{M_action_labels}   

\begena 
 
\item[\rm(a)] For $0\le k\le 5$, denote by $\rho_k$ the transposition $(k+1,k+2)$.  Then
$$s_{6-k}(\pm v(i,j))=  \pm v(\rho_k(i),\rho_k(j))  . $$
 \item[\rm(b)]  Define 
$K_0 = \{0, 1, 2, 3\}$ and $K_1 = \{4, 5, 6, 7\}$.  Then
$$s_{3'}(\pm v(i, j)) = \begin{cases} \mp v(k, l)&\text{if $\{i, j, k, l\} = K_p$ for 
some $p \in \{0, 1\}$},\\   \pm v(i, j) &\text{if } not.\end{cases}$$

 \enden\end{Proposition}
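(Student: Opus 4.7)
The plan is to verify both parts by direct computation. Given a coset $v(i,j)$ (resp.\ $-v(i,j)$), pick any $\alpha$ in it; by Definition \ref{vij}, $\alpha\vw$ has second entry $x_i+x_j-x_7$ (resp.\ $1+x_7-x_i-x_j$). To identify the coset $s\cdot v(i,j)$ for a generator $s$, I would compute the second entry of $\alpha s\vw = \alpha(s\vw)$. Writing $\vw' = s\vw$ and letting $x'_r$ denote the $x_r$-labels of $\vw'$ under \eqref{x_idefs}, this second entry equals $x'_i+x'_j-x'_7$ (resp.\ $1+x'_7-x'_i-x'_j$). Expressing each $x'_r$ in terms of the $x_r$ of $\vw$ and reading the result back in the form $x_p+x_q-x_7$ or $1+x_7-x_p-x_q$ then identifies the target coset.

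For part (a) with $k\in\{0,1,\ldots,4\}$, the generator $s_{6-k}$ is the row transposition $(7-k,\,8-k)$, which by inspection of \eqref{x_idefs} swaps exactly $x_{k+1}\leftrightarrow x_{k+2}$ and fixes $x_7$; direct substitution yields the claimed formulas. The case $k=5$ is subtler because $s_1=Y(23)$ is not a pure transposition. Applying \eqref{Yaction} to $(23)\vw$ gives
\[
s_1\vw=(2c-a,\,c+b-a,\,c,\,c+d-a,\,c+e-a,\,c+f-a,\,c+g-a,\,c+h-a)^T,
\]
so $x'_r = x_6+x_r-x_7$ for $r\notin\{6,7\}$, $x'_6=x_6$, and $x'_7=2x_6-x_7$. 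A short case split on whether $6$ or $7$ lies in $\{i,j\}$ then confirms $s_1(\pm v(i,j))=\pm v(\rho_5(i),\rho_5(j))$ with $\rho_5=(6,7)$; in each case the extra $x_6$ and $x_7$ terms telescope.

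For part (b), \eqref{Xaction} gives $x'_r=x_r$ for $r\in K_0$, while $x'_4=1+x_7-x_5-x_6$, $x'_5=1+x_7-x_4-x_6$, $x'_6=1+x_7-x_4-x_5$, and $x'_7=1+2x_7-x_4-x_5-x_6$. When $\{i,j\}\subset K_0$, the expression $x'_i+x'_j-x'_7$ simplifies, using the hyperplane relation $2+3x_7=\sum_{r=0}^{6}x_r$ from \eqref{Wdef}, to $1+x_7-x_k-x_l$ with $\{k,l\}=K_0\setminus\{i,j\}$, producing the sign flip to $-v(k,l)$. When $\{i,j\}\subset K_1$, a parallel computation (further split by whether $7\in\{i,j\}$) yields $-v(k,l)$ with $\{k,l\}=K_1\setminus\{i,j\}$. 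When $\{i,j\}$ meets both $K_0$ and $K_1$, the $x'_7$ terms cancel and we are left with $x_i+x_j-x_7$, so the coset is unchanged. The $-v(i,j)$ statements follow by identical manipulations.

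The main obstacle is the $\{i,j\}\subset K_1$ subcase of part (b): the interaction between $x'_7$ and the three $x'_r$ for $r\in\{4,5,6\}$ must be unwound carefully, and the hyperplane relation is the key tool that both produces the sign flip and identifies the complementary pair $\{k,l\}$. The case $k=5$ of part (a) is the other spot requiring nontrivial bookkeeping, due to the non-permutation factor $Y$ in $s_1$; the remaining cases reduce to reading off the label-swap directly from \eqref{x_idefs}.
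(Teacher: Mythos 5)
Your proposal is correct, and it is exactly the ``direct check'' that the paper's one-line proof alludes to: compute the second entry of $\alpha(s\vec w)$ via the labels $x'_r$ of $s\vec w$ and match it against Definition \ref{vij}, using the hyperplane relation $2+3x_7=\sum_{r=0}^{6}x_r$ to produce the sign flip in part (b) and to handle the non-permutation generator $s_1=Y(23)$ in part (a). All the stated label formulas for $(23)\vec w\mapsto Y(23)\vec w$ and for $X\vec w$ check out against \eqref{Yaction} and \eqref{Xaction}, so the case analysis goes through as claimed.
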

 
 \begin{proof}This may be checked directly. \end{proof}
 
 For example, let us compute $s_{3'} M_{v(2,5)}$  in two equivalent ways.  On the one hand we note, using \eqref{x_idefs}, that $v(2,5)\vec{w}$ has second entry $d+g-a$; using \eqref{CoxGenH} and \eqref{Xaction}, we see that $ s_{3'}(v(2,5))\vec{w}$ also has second entry $d+g-a$, so that  $s_{3'} M_{v(2,5)}=M_{v(2,5)}$.  On the other hand, Proposition 
\ref{M_action_labels}(b) gives the same result, 
since  the integers 2 and 5 do not both belong to the same $K_i$, for $i\in\{0,1\}$.

A metric on the coset space $G\backslash H$ may be defined as follows.  First, to each of our 56 cosets $\pm v(i,j)$, we associate an integer octuple, denoted by the same symbol, as follows:
\begin{equation}
\label{eq:vijdef}
\pm v(i,j)=\pm \biggl(4(\vec{e}_{i+1}+\vec{e}_{j+1})
-\sum_{k=1}^8 \vec{e}_k\biggr).
\end{equation}
For example, we find that
$v(0,1)=(3,3,-1,-1,-1,-1,-1,-1)^T$
and
$-v(4,7)=(1,1,1,1,-3,1,1,-3)^T$.  We have

\begin{Definition}\label{DiscDist}

Let $(v_1,\ldots,v_8)^T$ and $(w_1,\ldots,w_8)^T$ be the vectors associated, by (\ref{eq:vijdef}), to elements $v$ and $w$, respectively, of $G\backslash H$.  The {\it discrete distance} $dd(v,w)$ between  $\vec{v}$ and $\vec{w}$ is defined by
\begin{equation*}
dd({v},{w})
=\frac{1}{16} \sum_{k=1}^8 (v_k-w_k)^2. 
\end{equation*}
\end{Definition}

Under the discrete distance, 
$G\backslash H$ becomes a metric space.
It is shown in \cite{Roy} that the possible discrete distances in
$G\backslash H$ are $0,2,4,$ and $6$; and that the action of $H$ 
by right multiplication
on $G\backslash H$ preserves the discrete distance.

For a set $\Omega$ and an integer $n\geq1$,
we let $(\Omega)^{(n)}$ denote the
set of $n$-element subsets of $\Omega$.
Suppose $\{a,b,c\} \in (G\backslash H)^{(3)}$ has
the unordered multiset of distances
$\{dd(a,b),dd(a,c),dd(b,c)\}$.
If the distances in weakly increasing order are
$\{x,y,z\}$, we will say that the triple
$\{a,b,c\}$ is of
{\it Euclidean type} $xyz$. It is shown in \cite{Roy}
that we have five possible Euclidean types, given by
$222,224,244,444,246$.

The action of $H$ by right multiplication
on $(G\backslash H)^{(3)}$ has five orbits. These five orbits
correspond to the above Euclidean types
$222,224,244,444,246$. Furthermore, it is shown in
\cite{Roy} that, if $\tau_1,\tau_2,\tau_3$
 are distinct elements of
$G\backslash H$, then there is a three-term relation
involving the functions
$\MM_{\tau_1} ,\MM_{\tau_2}$, and $\MM_{\tau_3}$.  The coefficients of such a relation are rational combinations of gamma and sine functions, whose arguments are polynomials in $a,b,c,d,e,f,g,h$.
Because of the five orbits just described, we have 
five different types of three-term relations for the
$M$ function. A three-term relation of each of these
five types is found in \cite{Roy}, and any other three-term relation can
be obtained from one of these five relations through a change of
variable of the form
$\vw \mapsto \mu\vw$, for some
$\mu \in H$, applied to all terms and coefficients of the
original three-term relation. In total, we have
$\binom{56}{3}=27720$ three-term relations
satisfied by the $M$ function.  

\section{The functions  $\JJ(\vec{x})$ and 
$\LL(\vec{x})$: Coxeter groups, actions, and orbits}\label{sec4}
In this section, we review the results 
concerning the function $\JJ(\vec{x})$ (cf. \eqref{Jdef})
obtained in \cite{FGS,GMS},   
the results concerning the function
$\LL(\vec{x})$ (cf. \eqref{Ldef}) obtained in \cite{GMS,M1},  
and the results  of \cite{GMS} that relate $\LL$ functions to $\JJ$ functions.

As we did in Section 3, we first assemble some results from Coxeter group theory.  The Dynkin diagram of the Coxeter group $W(D_n)$ is given by
the graph with vertices labeled $1’,1,2,\ldots,n-1$, where
$i,j \in \{1,2,\ldots,n-1\}$ are connected by an edge if and only if
$|i-j|=1$, and $1’$ is connected to $2$ only. The presentation
of $W(D_n)$ is given by
\begin{equation}W(D_n)=\langle a_{1’},a_1,a_2,\ldots,a_{n-1}:(a_i a_j)^{m_{ij}}=1 \rangle,\label{wdn_gen}\end{equation}
where $m_{ii}=1$ for all $i$; and for $i$ and $j$ distinct, $m_{ij}=3$
if $i$ and $j$ are connected by an edge, and $m_{ij}=2$ otherwise.
The order of $W(D_n)$ is
$2^{n-1}n!$ (see \cite[Section $2.11$]{Hum}).

Similarly to Section \ref{sec3}, we consider the matrix group $GL(7,\mathbb{C})$ 
acting on 
the complex vector space
$\mathbb\C^7$ from the left.  If $\sigma \in S_7$, we will
identify $\sigma$ with the element of $GL(7,\mathbb{C})$  that permutes
the rows of the identity matrix $I_7 \in GL(7,\mathbb{C})$
according to the permutation $\sigma$. As an example,
\begin{equation*}
(123)=
\left( \begin{array}{ccccccc}
0 & 0 & 1 & 0 & 0 & 0 & 0 \\
1 & 0 & 0 & 0 & 0 & 0 & 0 \\ 
0 & 1 & 0 & 0 & 0 & 0 & 0 \\ 
0 & 0 & 0 & 1 & 0 & 0 & 0 \\ 
0 & 0 & 0 & 0 & 1 & 0 & 0 \\ 
0 & 0 & 0 & 0 & 0 & 1 & 0 \\ 
0 & 0 & 0 & 0 & 0 & 0 & 1         
\end{array} \right).
\end{equation*}

We also let  $X_1 \in GL(7,\mathbb{C})$ be the matrix
\begin{equation}
\label{310}
X_1=
\left( \begin{array}{ccccccc}
1 & 0 & 0 & 0 & 0 & 0 & 0 \\
0 & 0 & -1 & 0 & 1 & 0 & 0 \\ 
0 & -1& 0 & 0 &1 & 0 & 0 \\ 
0 & 0 & 0 & 1 & 0 & 0 & 0 \\ 
 0 & 0 & 0& 0 &1&0 & 0 \\ 
0 & -1 & -1&0 &1 & 1& 0 \\ 
0 & -1 & -1 & 0 & 1& 0 & 1
\end{array} \right).
\end{equation}

We note that
if $\vec{x}=(A,B,C,D,E,F,G)^T \in V$ 
(see (\ref{vdef}) for the definition of the affine hyperplane $V$)
then,
by virtue of the relation $E+F+G-A-B-C-D=1$, we have
\begin{equation}X_1\vec{x}
=(A,E-C,E-B,D,E,1+A+D-G,1+A+D-F)^T.\label{A1def}\end{equation}

We consider the subgroup $H_1$ of $GL(7,\mathbb{C})$ defined by:
\begin{equation}
H_1=\langle (12),(23),(34),(56),(67),X_1 \rangle.\label{H1def}
\end{equation}
It is shown in \cite{FGS} and \cite{M1} that $H_1$ is
isomorphic to the Coxeter group $W(D_6)$ of order 23040.
It is also shown that the Coxeter generators 
$a_1,a_2,a_3,a_4,a_5,a_{1’}$ of $H_1$, cf. the presentation
\eqref{wdn_gen}, may be given by
\begin{equation}
\label{CoxGenH1}
a_1=(23),a_2=(34),a_3= X_1,a_4=(56),
a_5=(67),a_{1’}=(14).
\end{equation}(In  earlier references, $a_{1’}=(12)$ was used instead of $a_{1’}=(14)$.  But the latter designation will be more convenient for our present purposes.)

The Coxeter group $W(D_6)$ is well-known to have a 
center consisting of two elements 
(see \cite[pp.\ 82 and 132]{Hum}). We denote by $Z_1$ 
the unique nonidentity element in the center
of $H_1$. The element $Z_1$ is called the {\it central involution},
and is computed to be
\begin{equation*}
Z_1=(14)(23)[[(1234)(567)]^2X_1]^4. 
\end{equation*}
We note that, if $\vec{x}=(A,B,C,D,E,F,G)^T \in V$
(see (\ref{vdef})), then
\begin{equation}
Z_1\vec{x}=
(1-A,1-B,1-C,1-D,2-E,2-F,2-G)^T\in V. \label{w0def}
\end{equation}

We next consider the following subgroups of $H_1$:
\begin{equation}
G_{\JJ}=\langle (23),(34),(56),(67),X_1 \rangle
\label{gj}\end{equation}
and\begin{equation}
G_{\LL}=\langle (12),(23),(34),(67),(57)X_1(57) \rangle.\label{gl}
\end{equation}
We have the following:

\begena\item
It is shown in \cite{FGS} that $G_{\JJ}$ is an invariance group for 
$J(\vec{x})$, i.e.
\begin{equation*}
\JJ(\alpha\vec{x})=\JJ(\vec{x}) \textrm{ for all } \alpha \in G_{\JJ},
\end{equation*}
and, furthermore, that $G_{\JJ}$ is 
isomorphic to the symmetric group $S_6$ of
order 720. This gives 720 invariances, or 
two-term relations, for the 
function $\JJ(\vec{x})$. 

 \item
Similarly, it is shown in \cite{M1} that $G_{\LL}$ is an invariance group for
$\LL(\vec{x})$, i.e.
\begin{equation*}
\LL(\beta\vec{x})=\LL(\vec{x}) \textrm{ for all } \beta \in G_{\LL},
\end{equation*}
and, furthermore, that $G_{\LL}$ is 
isomorphic to the Coxeter group $W(D_5)$
of order 1920. This gives 1920 invariances, or 
two-term relations, for the 
function $\LL(\vec{x})$.\end{enumerate}

We can express the above invariances of 
$\JJ(\vec{x})$ and $\LL(\vec{x})$ succinctly if we reparameterize 
the $\JJ$ and $\LL$ functions as follows:  Let 
\begin{equation}\label{Ktwiddle}
\widetilde{\JJ}(x_0,x_1,x_2,x_3,x_4,x_5)
= \JJ(A;B,C,D; 
 E,F,G )
\end{equation}
and
\begin{equation}\label{Ltwiddle}
\widetilde{\LL}(x_0,x_1,x_2,x_3,x_4,x_5)
= \LL(A,B,C,D; 
  E;F,G ),
\end{equation}
where
\begin{align}\label{twiddleparams}&A=\frac{1}{2}+x_0+x_1+x_2+x_3+x_4+x_5, \hskip2.25pt
B=\frac{1}{2}+x_0+x_1+x_2-x_3-x_4+x_5,\nr&C=\frac{1}{2}+x_0+x_1+x_2+x_3-x_4-x_5, \hskip2.25pt
D=\frac{1}{2}+x_0+x_1+x_2-x_3+x_4-x_5,\nr&E=1+2x_1+2x_2, \ 
F=1+2x_0+2x_1, \ 
G=1+2x_0+2x_2.\end{align}(This reparameterization of the $\LL$ function is equivalent to the
reparameterization given in \cite[Eq.\ (3.1)]{Wh3}.)
Then the following is readily shown: 

\begena
\item[(a)] The invariance of $\JJ(\vec{x})$ under the group
$G_{\JJ}\cong  S_6$  amounts to the
equivalence of the 720 functions
in the set  
\begin{align*}\{&\widetilde{\JJ}(x_{i_0},x_{i_1},x_{i_2},x_{i_3},x_{i_4},x_{i_5})\colon (i_0,i_1,i_2,i_3,i_4,i_5)  \\&\hbox{is a permutation of }
(0,1,2,3,4,5)\}.\end{align*}

\item[(b)] The invariance of $\LL(\vec{x})$ under the group
$G_{\LL}\cong  W(D_5)$  amounts to the
equivalence of the 1920 functions in the set\begin{align*}\{& \widetilde{\LL}(x_0,\pm x_{i_1},\pm x_{i_2},\pm x_{i_3},\pm x_{i_4},\pm x_{i_5})\colon (i_1,i_2,i_3,i_4,i_5)  \hbox{ is a permutation}\\&\hbox{ of }
(1,2,3,4,5)\hbox{ and the number of negative signs is even}\}.\end{align*} 
\end{enumerate}

The three-term relations for $\JJ(\vec{x})$ and
$\LL(\vec{x})$ are governed by 
the right cosets of their invariance
groups $G_{\JJ}$ and $G_{\LL}$, respectively, in $H_1$, as follows.

We discuss first the three-term relations
for $\JJ(\vec{x})$.
The number of right cosets of $G_{\JJ}$ in $H_1$ is 
${|H_1|}/{|G_{\JJ}|}={23040}/{720}=32$.  Moreover, from the definitions  \eqref{H1def} and \eqref{gj} of $H_1$ and $G_J$, we may directly deduce the following two facts.
\begen 

\item[(1)] The right $G_\JJ$  coset to which a given $\alpha\in H_1$ belongs is uniquely determined by the first coordinate of $\alpha \vx$, for $\vx\in(A,B,C,D,E,F,G)^T \in V$. (This first coordinate is well-defined, i.e. constant on any coset.)  

\item[(2)] If we write $$(A_0,A_1,A_2,A_3)=(A,B,C,D) \hbox{\quad and\quad} (E_0,E_1,E_2,E_3)=(1,E,F,G),$$ then the set of possible first coordinates of $\alpha \vx$ is 
$$\bigl\{1+A_r-E_q \colon  0\le r,q\le 3\bigr\}\cup  \bigl\{E_q-A_r \colon  0\le r,q\le 3\bigr\}.$$
\enden

Using the above observations, we introduce two different, but equivalent,
labelings of the elements of $G_\JJ \backslash H_1$. The first of these labelings is useful because it is compact; the second is more convenient for describing the action of $H_1$ on $G_\JJ \backslash H_1$, cf. Proposition 4.2 below.

\begin{Definition} \label{J_coset_labels}\begen\item[(a)] If $\sigma\in G_\JJ\backslash H_1$ is such that, for any $\alpha \in \sigma$, $\alpha\vx$ has first coordinate $1+A_r-E_q$, then we denote $\sigma$ by $p_{4q+r}$. If $\sigma$ is such that this first coordinate is $E_q -A_r$, then we denote $\sigma$ by $n_{4q+r}$.
\item[(b)]
Of all length-three strings of plus and minus signs, consider those having evenly many minus signs, denoted as follows:
\begin{equation*}  b_0 ={+}{+}{+},\quad  b_1 ={+}{-}{-},\quad b_2 ={-}{+}{-}, \quad b_3 ={-}{-}{+}.\end{equation*}
(If we replace each ``$+$'' with a ``$0$'' and each ``$-$'' with a ``$1$,'' then $b_i$ is an increasing function of $i$.) Then we assign, to the coset $p_{4q+r}$, the length-six string $b_r b_q$, meaning the concatenation of $b_r$ followed by $b_q$. We also assign, to the coset $n_{4q+r}$, the length-six string obtained by reversing all signs in the string for $p_{4q+r}$.\enden
\end{Definition}
For example, if $\alpha\vx$, for $\alpha\in\sigma$, has first coordinate $1+D-F$, then $\sigma = p_{4\times2+3} = p_{11} ={-}{-}{+}{-}{+}{-}    $. If this first coordinate is $G-B$, then $\sigma = n_{4\times3+1} = n_{13}=  {-}{+}{+}{+}{+}{-}$.

The logic behind the above definition — particularly part (b) — is elucidated by the following proposition.

\begin{Proposition}\label{J_action_labels} The natural action of $H_1$ on  $G_\JJ\backslash H_1$ may be described, in terms of the generators   of $H_1$, as follows:

 \begen 
 
\item[\rm(a)] The generator $a_k$  $(1\le k\le 5)$ transposes the $k$th and $(k+1)$st signs in the string for the  coset  $\sigma$, while  
 
 \item[\rm(b)] the generator $a_{1’}$ replaces the first sign with the negation of the second, and the second with the negation of the first. 
 
 \enden\end{Proposition}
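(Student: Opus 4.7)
The plan is to verify the claim directly for each of the seven generators of $H_1$, using the two facts stated just above Definition~\ref{J_coset_labels}: a right coset $\sigma=G_\JJ\gamma$ is uniquely determined by the first coordinate of $\gamma\vec{x}$, and this first coordinate must lie in $\{1+A_r-E_q:0\le r,q\le 3\}\cup\{E_q-A_r:0\le r,q\le 3\}$, yielding the labels $p_{4q+r}$ and $n_{4q+r}$. Under the natural right action $\alpha\cdot\sigma=G_\JJ\gamma\alpha$, the representative $\gamma\alpha$ satisfies $(\gamma\alpha)\vec{x}=\gamma(\alpha\vec{x})$, so computing $\alpha\cdot\sigma$ reduces to substituting $\alpha\vec{x}$ for $\vec{x}$ in the first-coordinate formula for $\sigma$ and re-expressing the result in the original parameters $A,B,\ldots,G$. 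I argue throughout with $p$-cosets, noting that both the sign-transposition operations of (a) and the ``first-two-sign'' operation of (b) commute with global negation of the six-sign string; this ensures that every verification on $p$-cosets carries over verbatim to $n$-cosets, whose sign-strings are by definition the negations of the $p$-strings.

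For the five permutation generators $a_1,a_2,a_4,a_5,a_{1'}$, each is a single transposition of entries of $\vec{x}$: $a_1=(23)$ and $a_2=(34)$ swap $A_1\leftrightarrow A_2$ and $A_2\leftrightarrow A_3$ respectively, $a_4=(56)$ and $a_5=(67)$ swap $E_1\leftrightarrow E_2$ and $E_2\leftrightarrow E_3$, and $a_{1'}=(14)$ swaps $A_0\leftrightarrow A_3$. Each therefore induces a single transposition among the indices of the $A_r$'s (or of the $E_q$'s), and hence a permutation of the four triples $b_0={+}{+}{+}$, $b_1={+}{-}{-}$, $b_2={-}{+}{-}$, $b_3={-}{-}{+}$. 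Direct inspection of these four confirms: swapping positions $1,2$ realizes $b_1\leftrightarrow b_2$ and fixes $b_0,b_3$; swapping positions $2,3$ realizes $b_2\leftrightarrow b_3$ and fixes $b_0,b_1$; and the operation of (b) realizes $b_0\leftrightarrow b_3$ and fixes $b_1,b_2$. These match exactly the index permutations induced by $a_1,a_2,a_{1'}$ on the first triple $b_r$ and by $a_4,a_5$ on the second triple $b_q$.

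The only non-permutation case is $a_3=X_1$. Formula \eqref{A1def} yields $(A'_0,A'_1,A'_2,A'_3)=(A,E-C,E-B,D)$ and $(E'_0,E'_1,E'_2,E'_3)=(1,E,1+A+D-G,1+A+D-F)$. For each of the sixteen pairs $(r,q)\in\{0,1,2,3\}^2$, one computes $1+A'_r-E'_q$ and, using the Saalsch\"utzian identity $E+F+G-A-B-C-D=1$, rewrites it as either $1+A_s-E_t$ (a new $p$-label) or $E_t-A_s$ (a new $n$-label). Comparing the sixteen outcomes with the effect of transposing positions $3$ and $4$ of the six-sign string confirms part (a) for $k=3$; in particular, whenever this transposition swaps unequal signs, the parities of minus signs in \emph{both} triples flip simultaneously, correctly switching between a $p$-coset and an $n$-coset. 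The main bookkeeping obstacle is precisely this sixteen-case calculation, but each case is a one-line check and the uniform pattern emerges cleanly.
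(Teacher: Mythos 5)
Your argument is correct, and it is genuinely more self-contained than the paper's, which proves this proposition only by citing Definition 5.12 and the ensuing discussion in [FGS]; you are supplying the computation the paper outsources. Your reduction of the right action $G_\JJ\gamma\mapsto G_\JJ\gamma\alpha$ to the substitution of $\alpha\vec{x}$ into the first-coordinate formula is exactly how the paper's own worked example ($a_3 n_6=p_1$) proceeds, and your observation that all six string operations commute with global negation of the six-sign string legitimately reduces the $n$-cosets to the $p$-cosets (note that for $a_3$ this also covers the case where a $p$-label is sent to an $n$-label, since negating the label corresponds to negating the string). The treatment of the five permutation generators is complete: each induces a single transposition on the index $r$ of $A_r$ (or $q$ of $E_q$), and your table of how the three string operations permute $b_0,b_1,b_2,b_3$ matches those transpositions exactly. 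For $a_3=X_1$ the data you set up, $(A'_0,A'_1,A'_2,A'_3)=(A,E-C,E-B,D)$ and $(E'_0,E'_1,E'_2,E'_3)=(1,E,1+A+D-G,1+A+D-F)$, is correct, and spot checks confirm the claimed pattern: for instance $(r,q)=(1,0)$ gives $1+A'_1-E'_0=E-C=E_1-A_2$, i.e. $p_1\mapsto n_6$, matching the swap of the unequal signs in positions $3$ and $4$ of ${+}{-}{-}{+}{+}{+}$, while $(r,q)=(2,2)$ gives $1+A'_2-E'_2=1+C-F$ again, matching the equal signs in positions $3$ and $4$ of ${-}{+}{-}{-}{+}{-}$. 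Your parity remark (swapping unequal signs across the triple boundary flips the minus-sign parity of both triples, as it must when passing between $p$- and $n$-labels) is the right consistency check. The one thing separating this from a finished proof is that the sixteen-case tabulation for $a_3$ is asserted rather than displayed; since that is where essentially all the content of the proposition lives, you should either write out the table or give a uniform identity covering all sixteen cases.
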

 
 \begin{proof}This follows from Definition 5.12, and the ensuing discussion, in \cite{FGS}.  \end{proof}
 
 For example, consider the coset $n_6$ and the generator $a_3$; we can compute $a_3 n_6$  in two equivalent ways.  On the one hand, since $6=4\cdot1+2$,  $n_6$ consists of elements $\alpha\in H_1$ such that $\alpha \vx$ has first coordinate $E_1-A_2=E-C$, by Definition 4.1.  But $a_3=X_1$ takes $E-C$ to $B$, by   \eqref{A1def}.  So $a_3 n_6$ consists of elements $\alpha\in H_1$ such that $\alpha \vx$ has first coordinate $B;$ so by Definition 4.1, $a_3n_6= p_1$.
 
 On the other hand, we have $n_6=    {+}{-} {+}{-} {+}
 {+}$.  According to Proposition 4.2, application of the generator $a_3$ to this string gives the string ${+} {-}{-} {+}{+} {+}$, which equals $ p_1$.  That is (again), $a_3 n_6 =  p_1.$

     We now define a metric on $G_J\backslash H_1$, and use this metric to define the {\it Hamming type} of a triple of elements of this coset space.
\begin{Definition}
\label{hamming}

\begen

\item[\rm(a)]
We define the 
{\it Hamming distance}  $d(\sigma,\tau)$ between elements 
$\sigma,\tau \in G_{\JJ}\backslash H_1$ to be the number of coordinates at which the strings for $\sigma$ and $\tau$ disagree.

\item[\rm(b)] 
Let $S(\JJ^3)$ denote the set of three-element subsets of  $G_{\JJ}\backslash H_1$. The 
{\it Hamming type} of  
$\{\sigma,\tau,\mu\}\in  S(\JJ^3)$ is 
defined to be the three-digit integer
$abc$, where $a$ is the shortest among the Hamming distances
  $d(\sigma,\tau)$,   $d(\sigma,\mu)$, and   $d(\tau,\mu)$; $b$ 
is the next shortest;
and $c$ is the longest. 
\enden
\end{Definition}

We note that $|S(\JJ^3)|=\binom{32}{3}=4960$. It is shown in  \cite[Proposition 6.5]{FGS} 
that each element of $S(\JJ^3)$ has Hamming type $222,224,244,246$, or $444$, 
and that the action of $H_1$ on $S(\JJ^3)$ by right multiplication (elementwise) partitions $S(\JJ^3)$
into five orbits, with each orbit consisting of all elements of  a given Hamming type.

We write $\JJ_\sigma(\vec{x})$ for $\JJ(\alpha\vec{x})$ 
whenever  $\alpha\in H_1$ belongs to the right coset $\sigma\in G_{\JJ}\backslash H_1$. A  three-term relation among the functions
$\JJ_{\sigma_1} , \JJ_{\sigma_2}$, and 
$\JJ_{\sigma_3}$
is called an $abc$ {\it relation}
if  
$\{\sigma_1,\sigma_2,\sigma_3\}$ is of Hamming
type $abc$. Explicit three-term relations for each of the Hamming types $222,224, 244, 246$, and $444$
 are obtained
in Propositions $7.3$, $7.4$, $7.5$,
$7.6$,  and $7.7$, respectively, of \cite{FGS}.  Applying the action of $H_1$ to the $\JJ$ functions and the coefficients of these five relations, we thereby obtain 4960 three-term relations, partitioned by Hamming type into five families.

\begin{Remark}
\label{R315} Let $\widetilde{\JJ}$ be as defined in equations (\ref{Ktwiddle}) and (\ref{twiddleparams}) above.  Then the set of 23040 $\JJ$ functions that relate to each other via the three-term relations just described is equal to the set
\begin{align*}\{&\widetilde{\JJ}(\pm x_{i_0},\pm x_{i_1},\pm x_{i_2},\pm x_{i_3},\pm x_{i_4},\pm x_{i_5})\colon 
 (i_0,i_1,i_2,i_3,i_4,i_5)\hbox{ is a permutation}\nr
&\hbox{of }(0,1,2,3,4,5)\hbox{ and the number of negative signs is even}\}.\end{align*}
Moreover, the set 
$$\{\widetilde{\JJ}( \pm x_0,\pm x_1,\pm x_2,\pm x_3,\pm x_4,\pm x_5)\colon \hbox{the number of negative signs is even}\}$$equals the set$$\{\JJ_\sigma(\vec{x})\colon \sigma\in G_{\JJ}\backslash H_1\}.$$
\end{Remark}
We next review three-term relations for $\LL(\vec{x})$.
The number of right cosets of $G_{\LL}$ in $H_1$ is 
${|H_1|}/{|G_{\LL}|}={23040}/{1920}=12$. The 12 right cosets
of $G_{\LL}$ in $H_1$ are indexed by 
$1,2,\ldots,6,\overline{1},\overline{2},\ldots,\overline6$, where,  for
each $i \in \{1,2,\ldots,6\}$,   $i $ and ${\overline{i}}$ are
interchanged under the action of the central involution $Z_1$. In particular, 
we define
the corresponding $\LL$ functions as follows:
\begin{align}\label{Lcosetsdef}
& \LL_6(\vec{x})=\LL\left(A,B,C,D;   G;F,E \right),\nr&
 \LL_5(\vec{x})=\LL\left(A,B,C,D;  F;E,G \right),\nr&
 \LL_4(\vec{x})=\LL\left(A,B,C,D;  E;F,G \right),\nr &
 \LL_3(\vec{x})=\LL\left(A,1+A-E,1+A-F,1+A-G;  1+A-D;1+A-B,1+A-C \right),\nr
&\LL_2(\vec{x})=\LL\left(A,1+A-E,1+A-F,1+A-G;  1+A-C;1+A-B,1+A-D \right),\nr&
\LL_1(\vec{x})=\LL\left(A,1+A-E,1+A-F,1+A-G;  1+A-B;1+A-C,1+A-D \right),\nr 
&\LL_{\overline6}(\vec{x})=\LL\left(1-A,1-B,1-C,1-D;  2-G;2-F,2-E \right),\nr&  
\LL_{\overline{5}}(\vec{x})=\LL\left(1-A,1-B,1-C,1-D;  2-F;2-E,2-G \right),\nr
&\LL_{\overline{4}}(\vec{x})=\LL\left(1-A,1-B,1-C,1-D;  2-E;2-F,2-G \right),\nr&
 \LL_{\overline{3}}(\vec{x})=\LL\left(1-A,E-A,F-A,G-A;  1+D-A;1+B-A,1+C-A \right),\nr
&\LL_{\overline{2}}(\vec{x})=\LL\left(1-A,E-A,F-A,G-A;  1+C-A;1+B-A,1+D-A \right),\nr
&\LL_{\overline{1}}(\vec{x})=\LL\left(1-A,E-A,F-A,G-A;  1+B-A;1+C-A,1+D-A \right). 
\end{align} 

In Proposition \ref{J_action_labels}, we described the action of $H_1$ on the functions $J_\sigma$ in terms of the string  for $\sigma$ (cf. Definition \ref{J_coset_labels}).  Similarly, we may now  describe the action of $H_1$ on the functions $L_\sigma$ in terms of the indices  $1,2,\ldots,6,\overline1,\overline2,\ldots,\overline6$.

\begin{Proposition} \label{L_action_labels} The natural action of $H_1$ on  $G_\LL\backslash H_1$ may be described, in terms of the  generators   of $H_1$ and the indices of $G_\JJ\backslash H_1$, as follows:

 \begen 
 
\item[\rm(a)] The generator $a_k$  $(1\le i\le 5)$ exchanges the index $k$ with $k+1$, and  $\overline{k}$ with $\overline{k+1}$.    
 
 \item[\rm(b)] The generator $a_{1’}$ exchanges the index $1$ with $\overline2$, and $\overline1$ with $2$.
 
 \enden\end{Proposition}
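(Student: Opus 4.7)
The plan is a direct verification, generator-by-generator, using the same right-multiplication convention for the $H_1$-action on $G_\LL\backslash H_1$ that was used (via the pullback convention) in the proof of Proposition \ref{J_action_labels}: that convention gives $\LL_{\sigma\cdot a}(\vec{x})=\LL_\sigma(a\vec{x})$ for any $\sigma\in G_\LL\backslash H_1$ and $a\in H_1$. So for each of the six generators $a\in\{a_1,\ldots,a_5,a_{1'}\}$ and each of the twelve functions $\LL_i$ listed in \eqref{Lcosetsdef}, I would substitute $a\vec{x}$ into the defining formula for $\LL_i(\vec{x})$ and, using the invariances of $\LL$ under $G_\LL$, identify the resulting expression with $\LL_{\pi(i)}(\vec{x})$ for the $\pi$ prescribed in (a) or (b).

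For the generators $a_1=(23)$, $a_2=(34)$, $a_5=(67)$, and $a_{1'}=(14)$ --- each of which lies in $G_\LL$ --- the substitution merely permutes parameters within the $S_4$-symmetry of the first four slots and/or the $(67)$-symmetry of the last two slots of each $\LL_i$, so the identification is immediate from the manifest symmetries alone. In particular, each of these four generators fixes $\sigma_4$, matching the proposition. For $a_4=(56)$, which is not in $G_\LL$: direct substitution gives $\LL_4(a_4\vec{x})=\LL(A,B,C,D;F;E,G)=\LL_5(\vec{x})$, and similar short computations yield the rest of the claimed swaps on cosets whose fifth parameter lies in $\{E,F,G\}$.

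The substantial case, and the main obstacle, is $a_3=X_1$. Using \eqref{A1def} we have $X_1\vec{x}=(A,E-C,E-B,D,E,1+A+D-G,1+A+D-F)^T$, and substituting this into each $\LL_i$ formula produces combinations of Saalsch\"utzian ${}_4F_3$ series whose parameter tuples do not match those of $\LL_{\pi(i)}(\vec{x})$ under the obvious $S_4\times\langle(67)\rangle$ symmetries; the identification requires the nontrivial $G_\LL$-invariance $(57)X_1(57)$, which encodes a Whipple/Thomae-type transformation of the Saalsch\"utzian ${}_4F_3(1)$ appearing in \eqref{Ldef}. Equivalently, passing to the VVWP ${}_7F_6(1)$ representation \eqref{eq240} of $\LL$ reduces each identification to a known permutation symmetry of the five free parameters $b,c,d,e,f$ of that series, so that the equality between $\LL_i(X_1\vec{x})$ and $\LL_{\pi(i)}(\vec{x})$ becomes manifest once the six $\,{}_7F_6$-parameters $(a,b,c,d,e,f)$ are computed on both sides.

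The action on the barred cosets is not an independent computation. Since $Z_1$ lies in the center of $H_1$ and, by the convention chosen in the discussion preceding \eqref{Lcosetsdef}, interchanges $\sigma_i$ with $\sigma_{\bar i}$ for each $i\in\{1,\ldots,6\}$, we have $\sigma_{\bar i}\cdot a=\overline{\sigma_i\cdot a}$ for every $a\in H_1$. The claimed swaps $\bar k\leftrightarrow\overline{k+1}$ in (a), and $\bar 1\leftrightarrow 2$ in (b), therefore follow at once from the corresponding swaps on the unbarred labels verified in the previous steps.
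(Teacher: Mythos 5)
Your overall strategy --- substitute $a\vec{x}$ into each formula of \eqref{Lcosetsdef} and match the result against the list using invariances of $L$ --- is exactly the ``direct check'' the paper intends, and your reduction of the barred cosets via the centrality of $Z_1$ is sound. But there is a concrete error in your case analysis: you place $a_{1'}=(14)$ among the generators whose action is ``immediate from the manifest symmetries alone'' (the $S_4$ symmetry of the first four slots and the $(67)$ symmetry of the last two). That is false, and it is false precisely for part (b), the one nontrivial claim about $a_{1'}$. Membership of $(14)$ in $G_{\LL}$ only guarantees that the identity coset $\sigma_4$ is fixed under right multiplication; it says nothing about the other cosets. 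Concretely, $L_1((14)\vec{x})=L(D,1+D-E,1+D-F,1+D-G;\,1+D-B;\,1+D-C,1+D-A)$, whereas $L_{\overline{2}}(\vec{x})=L(1-A,E-A,F-A,G-A;\,1+C-A;\,1+B-A,1+D-A)$; the distinguished fifth parameters $1+D-B$ and $1+C-A$ differ unless $A+D=B+C$, so no permutation of the first four slots or of the last two can effect this identification. The same problem occurs for $a_{1'}$ acting on $\sigma_2$, $\sigma_3$, and their barred partners (for instance $L_3((14)\vec{x})$ has fifth parameter $1+D-A$ while $L_3(\vec{x})$ has $1+A-D$).

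The repair is that $a_{1'}$ must be handled exactly like your ``substantial case'' $a_3=X_1$: the identifications $L_1((14)\vec{x})=L_{\overline{2}}(\vec{x})$, $L_3((14)\vec{x})=L_3(\vec{x})$, and so on require the non-manifest generator $(57)X_1(57)$ of $G_{\LL}$ --- equivalently a Whipple-type transformation, most transparently checked through the very-well-poised ${}_7F_6(1)$ representation \eqref{eq240} or the parameterization \eqref{Ltwiddle}, \eqref{twiddleparams}, under which both claims become signed permutations of $x_1,\dots,x_5$ with evenly many sign changes. As written, your argument does not establish part (b). The generators that genuinely act through the manifest $S_4\times\langle(67)\rangle$ symmetries alone are $a_1=(23)$, $a_2=(34)$, $a_4=(56)$, and $a_5=(67)$; the remaining two, $a_3$ and $a_{1'}$, both need the full invariance group $G_{\LL}$.
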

 
 \begin{proof}This may be checked directly. \end{proof}
 
 For example, consider the function $L_2$ and the generator $a_2$; we can compute $a_2 L_2$  in two equivalent ways.  On the one hand, we have $a_2=(34)$, so that $a_2$ exchanges $C$ and $D$; so from \eqref{Lcosetsdef}, we see that $a_2 L_2=L_3$.  On the other hand, this last equality follows immediately from  Proposition \ref{L_action_labels}(a).

\begin{Remark}
\label{LcosetsRem} 
The indexing of  $\LL$ functions used in \eqref{Lcosetsdef} is
slightly different from the 
labeling used in \cite{GMS,M1}. In particular,
as compared to those references,
we have  interchanged the following indices:
$4\leftrightarrow 6, \overline{4}\leftrightarrow \overline{6},
1\leftrightarrow 3, \overline{1}\leftrightarrow \overline{3}$. This new indexing will allow us to express a certain equivalence of group actions in a particularly simple way, cf.  Theorem \ref{D6action} below.\end{Remark}

We have
\begin{Definition}
\label{coherdef}
A subset of $G_{\LL} \backslash H_1$ is  is called 
{\it $\LL$-coherent}
if no two elements of this subset are interchanged by the action of the central
involution $Z_1$. A subset of $G_{\LL} \backslash H_1$ that is not $\LL$-coherent  is called 
{\it $\LL$-incoherent}.  
\end{Definition}

We note that a subset of $G_{\LL}\backslash H_1$ is   $\LL$-coherent if and only if it does not
contain both elements of the form $i$ and $\overline{i}$, for any
$i \in \{1,2, \ldots, 6\}$.

The group $H_1$ acts by right multiplication (elementwise) on the set $S(\LL^3)$ of 
three-element subsets of
$G_{\LL} \backslash H_1$.
There are $\binom{12}{3}=220$ 
such subsets. It is shown in 
\cite[Proposition $6.6$]{M1} that this group action partitions $S(\LL^3)$ into two orbits — an orbit of length 160,
 consisting of the $\LL$-coherent elements of $S(\LL^3)$, and an  orbit of
length 60, consisting of the $\LL$-incoherent elements.
 
A three-term relation among the functions
$\LL_i(\vec{x}), \LL_j(\vec{x})$, and $\LL_k(\vec{x})$, for $i,j,k\in \{1,2,\ldots,6,\overline1,\overline2,\ldots,\overline6\}$, is said to
be {\it $\LL$-coherent} if  $\{i,j,k\}$ is an $\LL$-coherent set, and {\it $\LL$-incoherent} otherwise.
Explicit examples of $\LL$-coherent and $\LL$-incoherent relations are given in  Propositions $7.2$ and  $7.4$, respectively, of \cite{M1}.
  Applying the action of $H_1$ to the $\LL$ functions and the coefficients of these two relations, we thereby obtain 220 three-term relations, partitioned by coherence into two families.

\begin{Remark}
Let $\widetilde{\LL}$ be as defined in equations (\ref{Ltwiddle}) and (\ref{twiddleparams}) above.  Then the set of 23040 $\LL$ functions that relate to each other via the three-term relations just described is equal to the set
\begin{align*}\{&\widetilde{\LL}(\pm x_{i_0},\pm x_{i_1},\pm x_{i_2},\pm x_{i_3},\pm x_{i_4},\pm x_{i_5})\colon 
 (i_0,i_1,i_2,i_3,i_4,i_5)\hbox{ is a permutation}\nr
&\hbox{of }(0,1,2,3,4,5)\hbox{ and the number of negative signs is even}\}.\end{align*}
Moreover,  the set
$$\{\widetilde{\LL}(\pm(x_i, x_{j(i)}, x_{k(i)}, x_{\ell(i)}, x_{m(i)}, x_{n(i)}))\colon 0\le i\le 5 \},$$where, for a given $i\in\{0,1,2,3,4,5\}$,    $j(i),k(i),\ell(i),m(i)$, and $n(i)$ denote the distinct elements of $\{0,1,2,3,4,5\}\backslash \{i\}$ (in some fixed order), 
and the plus/minus sign applies to the entire sextuple of coordinates,
equals the set$$\{\LL_\sigma(\vec{x})\colon \sigma\in G_{\LL}\backslash H_1\}.$$
\label{R320} 
\end{Remark}

The three-term relations for $J$ functions and for $L$ functions discussed above fit into a larger framework that also includes ``mixed'' three-term relations.  By the latter, we mean relations involving one $\JJ$ and two $\LL$
functions — called $(\JJ,\LL,\LL)$ relations — and relations involving
one $\LL$ and two $\JJ$
functions — called $(\LL,\JJ,\JJ)$ relations.   We conclude this section by recalling some details, developed in \cite{GMS}, of this larger framework. 

We define
\begin{equation*}
T=(G_{\LL}\backslash H_1) \cup (G_{\JJ}\backslash H_1).
\end{equation*}
We have $|T|=12+32=44$. The notions
of Hamming distance and Hamming type on $G_{\JJ}\backslash H_1$, cf. Definition \ref{hamming}(a)(b) above, are extended in \cite{GMS}
to obtain notions of distance and type on $T$ as follows:

\begin{Definition}  
\label{D520} \begen\item[\rm(a)]
If $\sigma_1,\sigma_2\in T$, then the  {\it distance} $d(\sigma_1,\sigma_2)$ between $\sigma_1$ and $\sigma_2$ is defined to be
\begen\item[\rm(i)]  $0$ if $\sigma_1=\sigma_2$.
\item[\rm(ii)] $2$  if:  $\sigma_1,\sigma_2\in G_{\JJ}\backslash H_1$ and the Hamming distance  between $\sigma_1$ and $\sigma_2$ is 2, {or}   (in the case where either $\sigma_1$ or $\sigma_2$ belongs to $G_{\LL}\backslash H_1$) $\sigma_1$ and $\sigma_2$ are not opposite. Here, two cosets in $T$ are said to be {\it opposite}  if some element of the former coset equals the central involution times some element of the latter.  Otherwise,  they are not opposite.

 \item[\rm(iii)]  $4$  if:  $\sigma_1,\sigma_2\in G_{\JJ}\backslash H_1$ and the Hamming distance  between $\sigma_1$ and $\sigma_2$ is 4, {or}   (in the case where either $\sigma_1$ or $\sigma_2$ belongs to $G_{\LL}\backslash H_1$) $\sigma_1$ and $\sigma_2$ are  opposite. 

\item[\rm(iv)] $6$ if  $\sigma_1,\sigma_2\in G_{\JJ}\backslash H_1$ and the Hamming distance  between $\sigma_1$ and $\sigma_2$ is 6.
 \end{enumerate}
\item[\rm(b)] By the {\it type} of  a three-element subset $\{\sigma_1,\sigma_2,\sigma_3\} $ of $T$, we  mean the symbol $abc$, where $a$, $b$, and $c$ are the integers $d(\sigma_1,\sigma_2)$, $d(\sigma_1,\sigma_3)$, and $d(\sigma_2,\sigma_3)$, written in weakly increasing order.  

\end{enumerate}
\end{Definition}
 
We denote  by $P_\sigma$, for $\sigma\in T$,  the $\JJ$ or $\LL$ function associated with $\sigma$ — that is, $P_\sigma(\vec{x})= \LL _\sigma (\vec{x})$ if $\sigma\in G_{\LL}\backslash H_1$, and $P_\sigma(\vec{x})= \JJ_\sigma( \vec{x})$ if $\sigma\in G_{\JJ}\backslash H_1$.  We also write
$$T^{(3)}=\{\hbox{\rm three-element subsets }\mathcal{S}\subset T\}.$$

The following results are proved in \cite[Theorem 5.1]{GMS}.

\begin{Theorem}\label{bigthm}  
\begin{enumerate}
\item[\rm(a)] The action of $H_1$ by right multiplication (elementwise) on $T^{(3)}$ partitions this set  into eighteen orbits. More specifically, there are:

\begin{enumerate}\item[\rm(i)]  two orbits  each of whose elements $\mathcal{S}$ is a set containing three elements of $G_{\LL}\backslash H_1$; 

\item[\rm(ii)]  four  orbits each of whose elements $\mathcal{S}$ is a set  containing one element of $G_{\JJ}\backslash H_1$ and two elements of $G_{\LL}\backslash H_1$; 

\item[\rm(iii)]  seven   orbits each of whose elements $\mathcal{S}$ is a set containing one element of $G_{\LL}\backslash H_1$ and two elements of $G_{\JJ}\backslash H_1$; 

\item[\rm(iv)]   five orbits   each of whose elements $\mathcal{S}$ is a set containing three elements of  $G_{\JJ}\backslash H_1$. 
\end{enumerate}

\item[\rm(b)] Let $\{\sigma_1,\sigma_2,\sigma_3\}\in T^{(3)}.$ Then there is a relation of the form
\begin{equation} \label{relngeneral}
c_1(\vx) P_{\sigma_1} (\vec{x}) + c_2(\vx) P_{\sigma_2} (\vec{x}) + c_3(\vx) P_{\sigma_3} (\vec{x}) = 0.
\end{equation}
Here, each of the coefficients $ c_j(\vx)$ is a   rational combination of sine and gamma functions, whose arguments are all $\Z$-linear combinations of the coordinates $A,B,C,D,E,F,G$ of $\vx$.

\item[\rm(c)] If $\{\sigma_1,\sigma_2,\sigma_3\} $ and $\{\sigma_1’,\sigma_2’,\sigma_3’\}   $ are in the same orbit under the action of $H_1$ on $T^{(3)}$ described above, then a three-term relation among
$P_{\sigma_1}$, $P_{\sigma_2}$, and $P_{\sigma_3}$ can be transformed into one among $P_{\sigma_1’}$, $P_{\sigma_2’}$, and $P_{\sigma_3’}$ by the
application of a single change of variable
$$\vec{x} \mapsto \rho \vec{x} \hspace{1in} (\rho \in H_1)$$
to all elements (including the coefficients) of the first relation.
  \end{enumerate}

 \end{Theorem}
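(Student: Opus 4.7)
My strategy for part (a) begins from the observation that the right-multiplication action of $H_1$ preserves the bipartition $T = (G_{\LL}\backslash H_1)\sqcup (G_{\JJ}\backslash H_1)$, since $H_1$ acts separately on each coset space. Therefore every orbit on $T^{(3)}$ sits inside one of the four ``type slices'' indexed by how many of its elements come from each side, giving the breakdown $(\text{i})$--$(\text{iv})$. For the pure slices $(\text{i})$ and $(\text{iv})$ I would simply invoke the orbit counts already in hand: two orbits (coherent/incoherent) on triples from $G_{\LL}\backslash H_1$ by \cite[Proposition 6.6]{M1}, and five Hamming-type orbits on triples from $G_{\JJ}\backslash H_1$ by \cite[Proposition 6.5]{FGS}. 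The substantive work is in the two mixed slices: here I would use the distance function $d$ of Definition \ref{D520} as the coarse invariant and then verify, using the explicit labelings of Propositions \ref{J_action_labels} and \ref{L_action_labels}, that the resulting distance profiles (supplemented if necessary by a coherence-style refinement) split the mixed triples into exactly $4$ and $7$ orbits, respectively.

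Part (b) I would handle orbit-by-orbit. For the five pure-$\JJ$ orbits and the two pure-$\LL$ orbits, the required three-term relations are already in \cite{FGS} and \cite{M1}; only one representative relation per orbit is needed, since relations for the remaining members of the orbit will be produced by the change-of-variable construction in part (c). For each mixed orbit I would pick a convenient representative triple and derive a new three-term relation, either by contour manipulation of the Barnes integrals \eqref{JBarInt} and \eqref{LBarInt} — shifting poles of a $\JJ$-integral to reveal an $\LL$-integral contribution — or alternatively, by combining a classical contiguous relation for Saalsch\"utzian ${}_4F_3(1)$ series with one of the known pure relations; the representation \eqref{eq240} of $\LL$ as a very-well-poised ${}_7F_6(1)$ provides a useful bridge for such manipulations.

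Part (c) should be essentially immediate from the setup. If $\sigma_i' = \sigma_i\rho$ (after suitable reordering) for a common $\rho\in H_1$, then $P_{\sigma_i'}(\vec{x}) = P_{\sigma_i}(\rho\vec{x})$ by definition of the coset labelling, and each coefficient $c_j(\vec{x})$ in \eqref{relngeneral}, being a rational combination of sines and gammas whose arguments are $\Z$-linear in the coordinates of $\vec{x}$, transforms consistently under the same substitution $\vec{x}\mapsto\rho\vec{x}$. Thus the single substitution sends the unprimed relation term-by-term to the primed one.

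The main obstacle, I expect, lies in the mixed-orbit enumeration in part (a). Distance data alone are likely insufficient to separate all eleven mixed orbits: the possible multisets $\{d(\sigma_i,\sigma_j)\}$ take only a handful of values in $\{2,4,6\}$, fewer than $4+7=11$. A refinement will therefore be needed — most plausibly, an ``opposite'' flag attached to $\LL$-labels (recall the central involution $Z_1$ pairs $i$ with $\overline i$) together with a sign-parity invariant attached to $\JJ$-labels via the strings of Definition \ref{J_coset_labels}. Teasing out the correct refinement and checking it is complete and $H_1$-invariant is the step that will require the most care, and I would perform it by direct computation with the generators \eqref{CoxGenH1}, reducing each mixed triple to a canonical form and counting the resulting classes.
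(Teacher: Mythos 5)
First, a point of comparison: the paper does not prove this theorem at all. Its entire ``proof'' is the sentence preceding the statement, which defers to \cite[Theorem 5.1]{GMS}. So your proposal is necessarily a different route, since you attempt to reconstruct the argument from scratch. Your overall skeleton is sound: the bipartition of $T$ is indeed preserved by right multiplication, so orbits respect the four type slices; the pure slices are correctly disposed of by citing \cite[Proposition 6.5]{FGS} and \cite[Proposition 6.6]{M1}; and your part (c) argument is complete and correct (if $\sigma_i'=\sigma_i\rho$ then $P_{\sigma_i'}(\vec x)=P_{\sigma_i}(\rho\vec x)$, and the coefficients transport under the same substitution).

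The genuine gaps are in the mixed slices of parts (a) and (b), and they are larger than you estimate. Your proposed coarse invariant \dh the multiset of distances from Definition \ref{D520} \dh degenerates almost completely on mixed triples: since the central involution $Z_1$ is central, it carries right $G_\LL$ cosets to right $G_\LL$ cosets and right $G_\JJ$ cosets to right $G_\JJ$ cosets, so a $\JJ$ coset and an $\LL$ coset are \emph{never} opposite and every $\JJ$\dh$\LL$ distance equals $2$. Hence a $(\JJ,\LL,\LL)$ triple has distance multiset $\{2,2,2\}$ or $\{2,2,4\}$ only (two classes, not four), and an $(\LL,\JJ,\JJ)$ triple has multiset determined solely by the Hamming distance between the two $\JJ$ cosets (three classes, not seven). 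So the refinement you defer to ``direct computation'' is not a final polish but the bulk of the enumeration, and you give no candidate invariant that is both $H_1$-stable and fine enough to produce $4+7$ classes, nor the transitivity check showing each class is a single orbit. Likewise in part (b), the existence of the eleven mixed three-term relations is exactly the analytic content that \cite{GMS} supplies; your two suggested mechanisms (contour shifts in \eqref{JBarInt}/\eqref{LBarInt}, or contiguous relations bridged through \eqref{eq240}) are plausible starting points but neither is carried out, and without at least one worked representative per orbit the claim is unproven. As written, the proposal is a credible plan whose hardest steps are named but not executed.
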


 \begin{Remark} More detail on the above theorem is given in \cite {GMS}.  There,  for each of the
 eighteen orbits of Theorem \ref{bigthm}(a), a three-term relation involving an element $\{\sigma_1,\sigma_2,\sigma_3\} $ of that orbit is given explicitly
 in \cite[Section 6]{GMS}.   Further, the complexity of the  coefficients in a given relation is seen to be related to the distances between the constituent $\JJ/\LL$ functions. 
 
 Generally, such detail will not be required in the present work, though we will write out one particular $\JJ/\LL$ relation explicitly, in   Section 8,  below. \end{Remark}

\section{$\JJ/\LL$ functions from $M$ functions:  an algebraic correspondence}\label{sec5}
\label{AlgebraicCorr}
Our ultimate goal is to establish and describe a correspondence between the set $\{M_\tau\colon \tau \in G\backslash H\}$ of $M$ functions on the one hand, and the set   $ \{\LL_\sigma\colon \sigma \in G_\LL\backslash H_1\}\cup\{\JJ_\mu\colon \mu \in G_\JJ\backslash H_1\}$ on the other. 
(See (\ref{HgroupDef}), (\ref{GgroupDef}), (\ref{H1def}), (\ref{gj}), and (\ref{gl}) for the definitions
of the groups $H,G,H_1,G_J$, and $G_L$, respectively.)
Note that this set of $M$ functions has cardinality 56, while the  set  of $\LL$ and $\JJ$ functions has cardinality $12+32=44$, so our association cannot be one-to-one.  Rather, as we will see, this association is ``two-to-one onto $L$ functions and one-to-one onto $J$ functions,'' corresponding to the fact that $56=12+12+32.$

Actually, we will present two {\it a priori} different correspondences.

\begin{Correspondence}\label{al_corr}
 {An algebraic correspondence:} 
We will show that a certain subgroup $Q$ of $H$, acting in the natural way on the coset space $G\backslash H$, has three orbits; that the action of $Q$ on each of the first two of these orbits is isomorphic to the action of $H_1$ on $G_L\backslash H_1$; and that that the action of $Q$ on the third of these orbits is isomorphic to the action of $H_1$ on $G_J\backslash H_1$.

\end{Correspondence}

\begin{Correspondence}\label{an_corr}
 {An analytic correspondence:} We will show that a certain limit of
any (appropriately normalized) $M_\tau$ is either an $\LL_\sigma$ or a $\JJ_\sigma$, depending on $\tau$.
\end{Correspondence}

The upshot of this section and Section \ref{AnalyticCorr} will be that these two correspondences are essentially the same.  

In this section, we focus on Correspondence \ref{al_corr}.  Key to this  is the intransitive action of a certain subgroup  $Q$ of $H$ on $G\backslash H$.  Specifically, we have the following.

\begin{Proposition}
\label{3orbits} Define a subgroup $Q$ of $H$ by
\begin{equation}
\label{QgroupDef}
Q
=\langle
s_1,s_2,s_3,s_4,s_5,s_{3’}
\rangle. 
\end{equation}Then:
\begena
\item[\rm(a)]    $Q$ is isomorphic to the subgroup $H_1$ of $GL(7,\C)$ given by \eqref{H1def}, via the map $m$ defined, on generators of $Q$, as follows:
\begin{equation}
m(s_k)= a_{6-k}\ (1\le k\le 5),\quad 
m(s_{3’})= a_{1’} .\label{IsoGen}\end{equation}

 (Therefore, $Q\cong W(D_6)$.) 
 
  \item[\rm(b)] There are three orbits $\mathscr{O}_1,\mathscr{O}_2$, and $\mathscr{O}_3$ of the action of
$Q$ on the set of $56$ right cosets of 
$G$ in $H$, namely:\begin{align*}
&\mathscr{O}_1
=\{v(0,j):2\leq j \leq 7\}
\cup
\{-v(1,j):2\leq j \leq 7\},\\
&\mathscr{O}_2
=\{v(1,j):2\leq j \leq 7\}
\cup
\{-v(0,j):2\leq j \leq 7\},\\
&\mathscr{O}_3
=
\{\pm v(i,j):(i,j)=(0,1)  {\rm\ or\ }
2\leq i < j \leq 7\}.
\end{align*}\enden
\end{Proposition}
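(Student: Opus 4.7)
My plan is to handle parts (a) and (b) separately, using the explicit descriptions of generators and their action assembled in Section~\ref{sec3}.

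For (a), I would argue structurally. The generator set $\{s_1,s_2,s_3,s_4,s_5,s_{3'}\}$ for $Q$ is obtained from the Coxeter generators $\{s_1,\ldots,s_6,s_{3'}\}$ of $H\cong W(E_7)$ by removing exactly $s_6$, so $Q$ is a standard parabolic subgroup of $H$. By a standard fact from Coxeter theory (cf.\ \cite[Section 5.5]{Hum}), $Q$ is itself a Coxeter group whose Coxeter diagram equals the subdiagram of the $E_7$ diagram induced on the retained nodes $\{1,2,3,4,5,3'\}$. Deleting node $6$ from $E_7$ leaves a chain on the nodes $1,2,3,4,5$ together with $3'$ attached to $4$, which, under the node relabeling $s_k\mapsto a_{6-k}$ $(1\le k\le 5)$ and $s_{3'}\mapsto a_{1'}$, is precisely the $D_6$ diagram described in \eqref{wdn_gen}. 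Checking edge-by-edge that the exponents $m_{ij}$ agree then yields the asserted isomorphism $m$.

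For (b), I would use Proposition~\ref{M_action_labels} to compute the orbits directly. That proposition says $s_{6-k}$ acts on $\pm v(i,j)$ as the transposition $(k+1,k+2)$ of its two subscripts while fixing the sign, so $s_1,\ldots,s_5$ act as $(6,7),(5,6),(4,5),(3,4),(2,3)$ and together generate the symmetric group on $\{2,\ldots,7\}$, leaving both the sign and the indices $0,1$ untouched. Under this $S_6$-action the $56$ cosets break into eight suborbits: the two singletons $\{v(0,1)\}$ and $\{-v(0,1)\}$, the four six-element suborbits $\{\pm v(0,j)\}_{j\ge 2}$ and $\{\pm v(1,j)\}_{j\ge 2}$, and the two fifteen-element suborbits $\{\pm v(i,j)\}_{2\le i<j\le 7}$. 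I would then invoke Proposition~\ref{M_action_labels}(b), with $K_0=\{0,1,2,3\}$ and $K_1=\{4,5,6,7\}$, to see exactly how $s_{3'}$ glues these pieces. For instance, $s_{3'}(v(0,2))=-v(1,3)$ fuses the $v(0,\cdot)$ suborbit with the $-v(1,\cdot)$ suborbit, producing $\mathscr{O}_1$ of size $12$; the parallel identity $s_{3'}(v(1,2))=-v(0,3)$ fuses $v(1,\cdot)$ with $-v(0,\cdot)$ to give $\mathscr{O}_2$; and $s_{3'}(v(0,1))=-v(2,3)$ together with $s_{3'}(v(4,5))=-v(6,7)$ fuses the two singletons with the two fifteen-element suborbits into $\mathscr{O}_3$ of size $32$. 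Transitivity within each $\mathscr{O}_i$ is immediate from $S_6$-transitivity of the component suborbits combined with the single $s_{3'}$-bridges just exhibited, and the count $12+12+32=56$ confirms no further gluing occurs.

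The one point requiring care is the bookkeeping for $\mathscr{O}_3$, since $s_{3'}$ acts on $\pm v(i,j)$ in three qualitatively different ways: it negates and swaps the pair when $\{i,j\}\subseteq K_0$, does the same when $\{i,j\}\subseteq K_1$, and otherwise fixes the coset. One must verify that the $s_{3'}$-images of all thirty-two cosets in $\mathscr{O}_3$ remain inside $\mathscr{O}_3$, which becomes straightforward once one stratifies the index pairs $\{i,j\}\subseteq\{0,1,\ldots,7\}$ appearing in $\mathscr{O}_3$ by their location relative to $K_0$ and $K_1$, but this stratification must be carried out systematically.
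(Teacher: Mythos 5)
Your argument is correct, but it takes a genuinely different route from the paper: the paper's entire proof of this proposition is a citation to \cite[Ex.\ 8.2.11]{Green}, where the relevant weight-poset/orbit computation for the $56$ weights of the minuscule $E_7$ representation is carried out, whereas you rederive everything from ingredients internal to the paper. For part (a) you invoke the standard parabolic subgroup theorem (\cite[Section 5.5]{Hum}) applied to the deletion of the node $6$ from the $E_7$ diagram, and your relabeling $s_k\mapsto a_{6-k}$, $s_{3'}\mapsto a_{1'}$ does match the $D_6$ diagram of \eqref{wdn_gen} edge for edge; for part (b) you run the orbit computation directly from Proposition~\ref{M_action_labels}, first decomposing the $56$ cosets into the eight suborbits of the $S_6$ generated by $s_1,\ldots,s_5$ and then using $s_{3'}$ as a bridge. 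What your approach buys is a self-contained, checkable proof that makes transparent \emph{why} the orbit sizes are $12$, $12$, and $32$ (namely $6+6$, $6+6$, and $1+1+15+15$); what the paper's citation buys is brevity and a conceptual placement of the result inside the theory of minuscule representations. Two small points of care: your exhibited bridges for $\mathscr{O}_3$, namely $s_{3'}(v(0,1))=-v(2,3)$ and $s_{3'}(v(4,5))=-v(6,7)$, connect $v(0,1)$ and both $15$-element suborbits but not the remaining singleton $-v(0,1)$; you need the one further application $s_{3'}(-v(0,1))=v(2,3)$ (immediate from the same rule) to finish the connectivity. Also, for the claimed sets to be the full orbits and not merely subsets of orbits, you must confirm that each $\mathscr{O}_i$ is closed under $s_{3'}$ — you flag this only for $\mathscr{O}_3$, but the same stratification of pairs $\{i,j\}$ relative to $K_0$ and $K_1$ must be (and easily is) carried out for $\mathscr{O}_1$ and $\mathscr{O}_2$ as well; alternatively, closure of two of the three sets plus the fact that they partition $G\backslash H$ gives closure of the third for free.
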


 \begin{proof}
 The above can be deduced from
\cite[Ex.\ 8.2.11]{Green}. \end{proof}

Note that the orbits
$\mathscr{O}_1,\mathscr{O}_2$, and $\mathscr{O}_3$ are of size
  $12,$ $12$, and $32$, respectively.   This suggests that each of these first two orbits might be identified with our set of 12 $L$ functions, and the third orbit with our set of 32 $J$ functions.  We now provide such an identification.

\begin{Definition}
\label{CosetLabels}
\begena

 \item The elements of  $  \mathscr{O}_1$ are called  {\it blue $L$ cosets}.  We identify $  \mathscr{O}_1$ with $G_\LL\backslash H_1$ via the correspondence $\gamma_1$ given by
\begin{align*}
\gamma_1(v(0,j))=j-1, \quad 
 \gamma_1(-v(1,j)  )= \overline{j-1}\quad (2\le j\le 7).
\end{align*}

 \item The elements of  $  \mathscr{O}_2$ are called  {\it red $L$ cosets}.  We identify $  \mathscr{O}_2$ with $G_\LL\backslash H_1$ via the correspondence  $\gamma_1$ given by
\begin{align*}
\gamma_1(v(1,j))=j-1,\quad
\gamma_1(-v(0,j)) = \overline{j-1}\quad (2\le j\le 7).
\end{align*}

 \item The elements of  $  \mathscr{O}_3$ are called  {\it $J$ cosets}.   We identify $  \mathscr{O}_3$ with $G_\JJ\backslash H_1$ via the correspondence $\gamma_1$ given by
\begin{align*}
\gamma_1(v(0,1)) =\hbox{${+}{+}{+}{+}{+}{+}$} , \quad
\gamma_1(-v(0,1))= {-}{-}{-} {-}{-}{-} , 
\end{align*}
and, for $2\le i<j\le7$:
 \begin{align*} \gamma_1(v(i,j))= &  \hbox{ the string of six plus/minus signs with plus signs in }\\&\hbox{ positions  $i-1$ and $j-1$, and minus signs elsewhere},\\
 \gamma_1(-v(i,j)) = &\hbox{ the negation of }\gamma_1(v(i,j)).
\end{align*}
 
\enden
\end{Definition}Note that   $\gamma_1$ defines a map from  $\mathscr{O}_1\cup \mathscr{O}_2\cup  \mathscr{O}_3=G\backslash H$  to $T=(G_\LL\backslash H_1)\cup (G_\JJ\backslash H_1)$.

It will often be convenient to refer to $\gamma_1(\tau)$, for $\tau\in G\backslash H$, as the {\it label} of $\tau$.  So the above $L$ cosets are identified by their colors and their labels; the $J$ cosets, by their labels alone.

The following theorem shows that the labeling on any one of the three orbits $\mathscr{O}_i$ provides an isomorphism of group actions, with the groups in question being isomorphic to $W(D_6)$.  The theorem also justifies the above  ``blue $L$,'' ``red $L$,'' and ``$J$'' coset terminology.

\begin{Theorem}
\label{D6action}\begen\item[\rm(a)] 
The action of $Q$ on either the set $\mathscr{O}_1$ of blue $L$ cosets or  the set $\mathscr{O}_2$ of red $L$ 
cosets is isomorphic, via the isomorphism $m$ of Proposition \ref{3orbits}{(a)} and the correspondence $\gamma_1$ of Definition \ref{CosetLabels}(a) or (b),
to the action of $H_1$ on the set of 12 cosets of the $\LL$ function.

\item[\rm(b)] 
The   action of $Q$ on the set $\mathscr{O}_3$ of $\JJ$ cosets is equivalent, via the isomorphism $m$ of Proposition \ref{3orbits}{(a)} and the correspondence $\gamma_1$ of Definition \ref{CosetLabels}{(c)},
to the action of $H_1$ on the set of 32 cosets of the
$\JJ$ function.\enden\noindent
In other words, for   any $\tau\in G\backslash H$ and $\alpha\in Q$, we have$$\gamma_1(\alpha\tau)=m(\alpha)\gamma_1(\tau).$$
\end{Theorem}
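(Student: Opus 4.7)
The plan is to establish the single equivariance identity $\gamma_1(\alpha\tau)=m(\alpha)\gamma_1(\tau)$ for all $\tau\in G\backslash H$ and all $\alpha\in Q$; parts (a) and (b) then follow at once, because each orbit $\mathscr{O}_i$ is $Q$-invariant and $\gamma_1$ restricts to a bijection from it onto the appropriate $H_1$-label set. Since $m$ is a group isomorphism and both sides of the identity define left $Q$-actions on $G\backslash H$, it suffices to verify the identity for $\alpha$ ranging over the six Coxeter generators $s_1,\dots,s_5,s_{3'}$ of $Q$, using Proposition \ref{M_action_labels} to compute the left-hand side and Propositions \ref{J_action_labels} and \ref{L_action_labels} to compute the right-hand side (together with the translations $m(s_{6-k})=a_k$ and $m(s_{3'})=a_{1'}$).

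The verification for $s_1,\dots,s_5$ is essentially immediate. Each $s_{6-k}$ with $1\le k\le 5$ acts on $\pm v(i,j)$ via the transposition $\rho_k=(k+1,k+2)$, which fixes both of the indices $0$ and $1$; hence each of $\mathscr{O}_1,\mathscr{O}_2,\mathscr{O}_3$ is preserved. On an $L$-coset the action is a clean relabeling $j\mapsto\rho_k(j)$, which under $\gamma_1$ swaps $k\leftrightarrow k+1$ and $\overline{k}\leftrightarrow\overline{k+1}$, in accord with Proposition \ref{L_action_labels}(a). On a $J$-coset $\pm v(i,j)\in\mathscr{O}_3$, applying $\rho_k$ to the pair $\{i,j\}$ transposes the $k$th and $(k+1)$st signs of the associated six-sign string, in accord with Proposition \ref{J_action_labels}(a); the degenerate pair $\{0,1\}$, whose string is all $+$ or all $-$, is fixed by both actions.

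The main obstacle is the analysis of $s_{3'}$. By Proposition \ref{M_action_labels}(b), $s_{3'}$ acts trivially on $\pm v(i,j)$ unless $\{i,j\}$ lies inside one of $K_0=\{0,1,2,3\}$ or $K_1=\{4,5,6,7\}$, in which case it sends $\pm v(i,j)$ to $\mp v(k,\ell)$ with $\{k,\ell\}=K_p\setminus\{i,j\}$. For $\mathscr{O}_1$ this produces only the nontrivial pairings $v(0,2)\leftrightarrow -v(1,3)$ and $v(0,3)\leftrightarrow -v(1,2)$, which under $\gamma_1$ become the swaps $1\leftrightarrow\overline{2}$ and $2\leftrightarrow\overline{1}$ demanded by Proposition \ref{L_action_labels}(b); the computation for $\mathscr{O}_2$ is the mirror image. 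For $\mathscr{O}_3$ the key observation, and the crux of the whole theorem, is that under Definition \ref{CosetLabels}(c) the first two signs of the string for $\pm v(i,j)$ encode precisely whether $0$ and $1$ belong to $\{i,j\}$. Proposition \ref{J_action_labels}(b) reduces the action of $a_{1'}$ to the rule ``negate the first two signs when they agree, leave them unchanged when they disagree,'' and the signs agree exactly when $\{i,j\}=\{0,1\}$, when $\{i,j\}\subset\{2,3\}$, or when $\{i,j\}\subset\{4,5,6,7\}$ --- the three scenarios in which $\{i,j\}$ sits inside a single $K_p$. A short case check in each of these three scenarios then confirms that $K_p\setminus\{i,j\}$, together with the overall sign flip, is exactly the coset produced by $a_{1'}$.
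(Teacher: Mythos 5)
Your proposal is correct and follows essentially the same route as the paper: reduction to the Coxeter generators, the quick check for $s_1,\dots,s_5$ via the permutations $\rho_k$, and a case analysis of $s_{3'}$ versus $a_{1'}$ according to whether $\{i,j\}$ lies inside $K_0$ or $K_1$ (the paper organizes this into six cases on the locations of $i$ and $j$ rather than by orbit, but the content is identical). One sentence of yours is literally inaccurate --- the first two signs of a $J$-string record whether $2$ and $3$ (not $0$ and $1$) belong to $\{i,j\}$, since $v(2,3)$ and $v(0,1)$ both begin ${+}{+}$ --- but the claim you actually use, that these two signs agree exactly when $\{i,j\}$ is contained in a single $K_p$, is correct, so the argument stands.
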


\begin{proof}
We will prove that the actions are isomorphic by considering the actions of
individual Coxeter generators.

We first consider the case of a generator $a_k\in H_1$, where $k\in \{1,2,3,4,5\}$.
According to Propositions \ref{J_action_labels}(a) and \ref{L_action_labels}(a), this generator acts as the signed permutation $(k,k+1)$ both on the 12 indices $1,2,\ldots,6,\overline{1},\overline2,\ldots,\overline6$ of the
$L$ functions, and also on the 32 strings of plus/minus signs corresponding to the $J$ functions. (In the case of $J$ functions, this means that $a_k$ transposes the $k$th and $(k+1)$st symbols in the string.) On the other
hand, by Proposition \ref{M_action_labels}(a), the generator $s_{6-k}$ corresponding to $a_k$ acts on the 56 elements $\pm v(i,j)$ $(0\le i<j \le 7)$ via
the permutation $(k+1, k+2)$ of the indices $i$ and $j$. A routine case-by-case check
using Definition \ref{CosetLabels} shows that the actions on the three orbits are all
compatible in this case.

It remains to show that the actions of $a_{1’}\in H_1$ and $s_{3’}\in Q$ are compatible.  We will we will do so by showing that the actions of these two generators are compatible on
the element $v(i, j)$; the action on $-v(i, j)$ follows by an analogous 
argument, after negating all the labels and exchanging the roles of blue and
red.

We recall, from Propositions \ref{J_action_labels}(b) and \ref{L_action_labels}(b), that $a_{1’}$ acts on both $\LL$ and $\JJ$ functions  as the signed 
permutation $(1 \overline{2})$, which transposes $1$ with $\overline{2}$, $\overline{1}$ with $2$,
and leaves the other symbols fixed. (In the case of $J$ functions, this means that $a_{1’}$ transposes, and negates, the first and second symbols in the string.)
On the other hand we recall, cf. Proposition \ref{M_action_labels}(b),  that
$s_{3’}(\pm v(i, j))$ equals  $\mp v(k, l)$ if $i, j, k, l$ all belong to  $K_0 = \{0, 1, 2, 3\}$ or all belong to  $K_1 = \{4, 5, 6, 7\}$, and equals $  \pm v(i, j)$ otherwise.

There are six cases to consider, depending on whether each of the indices $i$
and $j$ comes from the set $\{0, 1\}$, the set $\{2, 3\}$, or the set
$\{4, 5, 6, 7\}$. We assume that $i < j$ throughout. Cases 1, 2, 3 and 6
deal with the orbit $\mathcal{O}_3$, and Cases 4 and 5 deal with the
other two orbits.

Case 1: $i, j \in \{0, 1\}$.
In this case, $v(i, j) = v(0, 1)$, whose label is ${+}{+}{+}{+}{+}{+}$,
and $s_{3’}(v(0, 1)) = -v(2, 3)$, whose label is ${-}{-}{+}{+}{+}{+}$. The
action of $a_{1’}$ is to exchange the leftmost two symbols and change the
sign of both of them, which produces the same effect.

Case 2: $i, j \in \{2, 3\}$.
This is similar to Case 1, except that 
$v(i, j) = v(2, 3)$, whose label is ${+}{+}{-}{-}{-}{-}$,
and $s_{3’}(v(2, 3)) = -v(0, 1)$, whose label is ${-}{-}{-}{-}{-}{-}$.

Case 3: $i, j \in \{4,5,6,7\}$.
Define $k, l \in \{4,5,6,7\}$ so that $k < l$ and $\{i,j,k,l\} = \{4,5,6,7\}$.
In this case, the label of $v(i, j)$ has plus signs in positions $i-1$ and 
$j-1$, and minus signs in positions $1$, $2$, $k-1$ and $l-1$.
We have $s_{3’}(v(i, j)) = -v(k, l)$, whose label has minus signs in positions
$k-1$ and $l-1$, and plus signs in positions $1$, $2$, $i-1$ and $j-1$. The
action of $a_{1’}$ changes the two leftmost plus signs to minus signs,
producing the same effect as $s_{3’}$.

Case 4: $i \in \{0, 1\}$ and $j \in \{2, 3\}$.
Define $k$ and $l$ such that $\{0, 1\} = \{i, k\}$ and $\{2, 3\} = \{j, l\}$.
In this case, $v(i, j)$ has label $j-1$, and is blue if $i = 0$ and red if
$i = 1$. We have $s_{3’}(v(i, j)) = -v(k, l)$, whose label is $\overline{l-1}$,
and whose color is the same as that of $v(i, j)$ because $k \ne i$.
Since $\{j-1, l-1\} = \{1, 2\}$, the effect of $a_{1’}$ is to exchange
$j-1$ and $\overline{l-1}$, which produces the same effect.

Case 5: $i \in \{0, 1\}$ and $j \in \{4,5,6,7\}$.
In this case, $\{i, j\}$ is not a subset of either $K_0$ or $K_1$, so $s_{3’}$
acts as the identity. The label of $v(i, j)$ is $j-1$, which is different from
$1$ or $2$, so $a_{1’}$ also acts as the identity, as required.

Case 6: $i \in \{2, 3\}$ and $j \in \{4,5,6,7\}$.
In this case, the label of $v(i, j)$ has plus signs in positions $i-1$ and 
$j-1$, and minus signs elsewhere. As in Case 5, $\{i, j\}$ is not a subset
of either $K_0$ or $K_1$, so $s_{3’}$ acts as the identity. Since one of the
leftmost two positions is a plus and the other is a minus, the action of
$a_{1’}$ is also the identity, as required.
\end{proof} 

 \section{The map $\gamma_1$ and distances between cosets}\label{DistCorr}
 
 Here we show that the above map $$\gamma_1\colon \mathscr{O}_1\cup \mathscr{O}_2\cup  \mathscr{O}_3=G\backslash H\rightarrow T=(G_\LL\backslash H_1)\cup (G_\JJ\backslash H_1)$$
preserves distance, with the exception that the distance between a blue $L$ coset and a red $L$ coset is compressed under this correspondence.
(See, again, (\ref{HgroupDef}), (\ref{GgroupDef}), (\ref{H1def}), (\ref{gj}), and (\ref{gl}) for the definitions
of the groups $H,G,H_1,G_J$, and $G_L$, respectively.)
 
 We begin by examining the discrete distance $dd(v,w)$ (cf. Definition \ref{DiscDist}) more closely.

\begin{Lemma}
\label{LemmaA} 
Let $v  = \pm v(i, j)$ and $w  = \pm v(k, l)$ be (possibly equal) 
cosets of $G$ in $H$, where the signs are chosen independently. Then 
$dd(v , w )$ is given by
\begen
  \item[\rm(i)] $0$ if and only if $v  = w $;
\item[\rm(ii)] $2$ if and only if either 
\begen
\item[\rm(a)] $|\{i, j\} \cap \{k, l\}| = 1$ and $v $ and $w $ have the same sign, or
\item[\rm(b)] $|\{i, j\} \cap \{k, l\}| = 0$ and $v $ and $w $ have opposite signs;
\enden
\item[\rm(iii)] $4$ if and only if either
\begen
\item[\rm(a)] $|\{i, j\} \cap \{k, l\}| = 0$ and $v $ and $w $ have the same sign, or
\item[\rm(b)] $|\{i, j\} \cap \{k, l\}| = 1$ and $v $ and $w $ have opposite signs;
\enden
\item[\rm(iv)] $6$ if and only if $v  = -w $.
\enden
\end{Lemma}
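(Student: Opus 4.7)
The plan is to carry out a direct case analysis using the explicit vector representation \eqref{eq:vijdef} of the cosets. Write $u(i,j) = 4(\vec{e}_{i+1}+\vec{e}_{j+1})-\sum_{k=1}^{8}\vec{e}_k$, so that $\pm v(i,j)$ is the vector whose entries equal $\pm 3$ in positions $i+1$ and $j+1$ and $\mp 1$ in the remaining six positions. Setting $v=\epsilon_1 u(i,j)$ and $w=\epsilon_2 u(k,l)$ with $\epsilon_1,\epsilon_2\in\{\pm1\}$, the quantity $dd(v,w)=\tfrac{1}{16}\|v-w\|^2$ depends only on $|\{i,j\}\cap\{k,l\}|\in\{0,1,2\}$ and on whether $\epsilon_1=\epsilon_2$.

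First I would handle the \emph{same-sign} case ($\epsilon_1=\epsilon_2$), where $v-w=\pm(u(i,j)-u(k,l))$. If $\{i,j\}=\{k,l\}$ then $v=w$ and $dd(v,w)=0$, giving (i). If $|\{i,j\}\cap\{k,l\}|=1$, say $i=k$ and $j\ne l$, the difference vector has coordinates $0$ in position $i+1$, $\pm 4$ in positions $j+1$ and $l+1$, and $0$ elsewhere; hence $\|v-w\|^2 = 32$ and $dd(v,w)=2$. If $|\{i,j\}\cap\{k,l\}|=0$, the difference vector has coordinates $\pm 4$ in the four positions indexed by $i+1,j+1,k+1,l+1$ and $0$ elsewhere; hence $\|v-w\|^2=64$ and $dd(v,w)=4$.

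Next I would do the \emph{opposite-sign} case ($\epsilon_1=-\epsilon_2$), where $v-w=\pm(u(i,j)+u(k,l))$. If $\{i,j\}=\{k,l\}$, this is $\pm 2u(i,j)$, whose squared norm equals $4(3^2+3^2+6\cdot 1^2)=96$, so $dd(v,w)=6$; this gives (iv). If $|\{i,j\}\cap\{k,l\}|=1$, say $i=k$, the sum vector has coordinate $\pm 6$ in position $i+1$, $\pm 2$ in positions $j+1$ and $l+1$, and $\mp 2$ in the remaining five positions, yielding $\|v-w\|^2=36+4+4+5\cdot 4=64$ and $dd(v,w)=4$. If $|\{i,j\}\cap\{k,l\}|=0$, the sum vector has $\pm 2$ in the four positions $i+1,j+1,k+1,l+1$ and $\mp 2$ in the remaining four positions, so $\|v-w\|^2=32$ and $dd(v,w)=2$.

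Collating the six subcases yields precisely the equivalences stated in (i)--(iv). There is no genuine obstacle here; the only care needed is in bookkeeping the coordinate contributions (in particular the overlap position in the $|\{i,j\}\cap\{k,l\}|=1$ opposite-sign case, where the two $\pm 3$'s reinforce to $\pm 6$). Since every alternative on the right-hand side of each clause is mutually exclusive and the four possible values $0,2,4,6$ exhaust the computed discrete distances, the ``if and only if'' assertions follow from the case analysis.
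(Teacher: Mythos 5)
Your proposal is correct: all six subcases are computed accurately (I verified the squared norms $0,32,64,96,64,32$), and the mutual exclusivity and exhaustiveness of the cases do yield the stated biconditionals. The paper itself simply cites \cite[Lemma 9.1.3]{Green} and describes the proof as a case-by-case check, so your argument is essentially the paper's approach with the details actually carried out.
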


\begin{proof}
This is a restatement of \cite[Lemma $9.1.3$]{Green}, and is proved using a case
by case check.
\end{proof}

\begin{Lemma}
\label{LemmaB} 
Let $v $ and $w $ be as in Lemma \ref{LemmaA}.
\begen
\item[\rm(i)] We have $dd(v , w ) = dd(-v , -w )$; in other words, 
negation on ${\Bbb R}^8$ induces an automorphism of discrete distance.  
\item[\rm(ii)] We have $dd(v , w ) + dd(v , -w ) = 6$.
\enden
\end{Lemma}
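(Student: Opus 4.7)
The plan is to expand the definition of $dd$ in terms of the vector representations from \eqref{eq:vijdef} and exploit the fact that all such vectors have the same Euclidean norm. Part (i) will follow immediately from termwise sign cancellation inside the squares, while part (ii) will reduce to a one-line inner product computation once the uniform norm observation is in hand; alternatively, one could just tabulate cases using Lemma \ref{LemmaA}, but the inner product approach is cleaner.

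For part (i), write $v = (v_1,\ldots,v_8)^T$ and $w = (w_1,\ldots,w_8)^T$. Then $-v$ and $-w$ have coordinates $-v_k$ and $-w_k$, and since $(-v_k - (-w_k))^2 = (w_k - v_k)^2 = (v_k - w_k)^2$, the sums defining $dd(v,w)$ and $dd(-v,-w)$ coincide termwise.

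For part (ii), I would first record the observation that every vector of the form $\pm v(i,j)$ has exactly two coordinates equal to $\pm 3$ and six coordinates equal to $\mp 1$, so
\begin{equation*}
\sum_{k=1}^{8} v_k^2 \;=\; 2\cdot 9 + 6 \cdot 1 \;=\; 24,
\end{equation*}
and likewise $\sum_k w_k^2 = 24$. Expanding the square in Definition \ref{DiscDist} and applying the polarization identity then gives
\begin{equation*}
dd(v,w) \;=\; \frac{1}{16}\Bigl(\sum_k v_k^2 + \sum_k w_k^2 - 2\, v\cdot w\Bigr)\;=\; \frac{48 - 2\,v\cdot w}{16}\;=\; 3 - \frac{v\cdot w}{8}.
\end{equation*}
Since negating $w$ simply flips the sign of the inner product, the same identity yields $dd(v,-w) = 3 + (v\cdot w)/8$, and adding these two expressions produces $dd(v,w) + dd(v,-w) = 6$, as required.

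There is no real obstacle in this argument — the only nontrivial step is noticing that all 56 vectors $\pm v(i,j)$ share the common squared norm $24$, after which both assertions are immediate. One could also verify (ii) by running through the four cases of Lemma \ref{LemmaA} together with their sign flips (Case 0 vs.\ Case 6, Case 2(a) vs.\ Case 3(b), Case 2(b) vs.\ Case 3(a), all summing to $6$), which provides a useful sanity check but is considerably more tedious.
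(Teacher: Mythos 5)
Your proof is correct. For part (i) you and the paper do essentially the same thing: the paper says it ``follows from the definitions and the observation that negation is an isometry of $\R^8$ with the Euclidean metric,'' which is exactly your termwise cancellation $(-v_k-(-w_k))^2=(v_k-w_k)^2$. For part (ii), however, your route is genuinely different: the paper simply cites Lemmas 9.3.2 and 9.3.3 of Green's book \emph{Combinatorics of Minuscule Representations}, whereas you give a self-contained computation. Your key observation — that every vector $\pm v(i,j)$ from \eqref{eq:vijdef} has two coordinates equal to $\pm 3$ and six equal to $\mp 1$, hence common squared norm $24$ — combined with polarization gives $dd(v,w)=3-\tfrac{1}{8}\,v\cdot w$, from which both the additivity $dd(v,w)+dd(v,-w)=6$ and, incidentally, a uniform explanation of why the only possible distances are $0,2,4,6$ (since $v\cdot w\in\{24,8,-8,-24\}$) fall out at once. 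What the paper's citation buys is brevity and placement of the fact in the general minuscule-representation framework (where the analogous statement holds for other weight systems); what your argument buys is that the lemma becomes verifiable entirely within the paper, with no appeal to external structure theory. Your closing remark that (ii) can also be checked against the case analysis of Lemma \ref{LemmaA} (the pairings $0\leftrightarrow 6$, (ii)(a)$\leftrightarrow$(iii)(b), (ii)(b)$\leftrightarrow$(iii)(a)) is a correct and worthwhile consistency check.
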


\begin{proof}
Part (i) follows from the definitions and the observation 
that negation is an isometry of ${\Bbb R}^8$ with the Euclidean metric.  

Part (ii) follows from \cite[Lemma $9.3.3$]{Green}, which applies to this situation
because of \cite[Lemma $9.3.2$]{Green}.
\end{proof}

\begin{Proposition}
\label{metric}
\begen
\item[\rm(a)] The discrete distance between $J$ cosets $v $ and $w$ equals the number of places (out of six) 
at which the strings $\gamma_1(v)$ and $\gamma_1(w)$ disagree.

\item[\rm(b)] If $v $ and $w$ are $L$ cosets of different colors but the same label, then $dd(v,w)=2$.

\item[\rm(c)] The discrete distance between two distinct $L$ cosets $v $ and $w$ of the same color is 2, unless $v$ and $w$ are opposite, meaning $\{\gamma_1(v),\gamma_1(w)\}=\{i,\overline{i}\}$ for some $1\le i\le 6$.  In the latter case, we have $dd(v,w)=4$.

\item[\rm(d)] If $v $ and $w$ are $L$ cosets of different colors  and different labels, then $dd(v,w)=4$, unless $v $ and $w$ form an opposite pair, in which case $dd(v,w)=6$.

\item[\rm(e)]  Let $v$ be an ``unbarred'' $L$ coset of either color, meaning $v=v(i,j)$ for $i\in\{0,1\}$ and $2\le j \le 7$  (so that the label $\gamma_1(v)$  has no bar).  If $w$ is a $J$  coset,  then $dd(v,w)=2$  if $\gamma_1(w)$ has a plus sign in position $i$, and $dd(v,w)=4$ otherwise.

\item[\rm(f)] Let $v$ be a ``barred'' $L$ coset of either color, meaning $v=-v(i,j)$ for $i\in\{0,1\}$ and $2\le j \le 7$  (so that the label $\gamma_1(v)$ has a bar).  If $w$ is a $J$  coset,  then $dd(v,w)=2$  if $\gamma_1(w)$ has a minus sign in position $i$, and $dd(v,w)=4$ otherwise.
\enden
\end{Proposition}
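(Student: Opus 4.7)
The plan is to verify parts (a)-(f) by an explicit case analysis, combining two inputs: Lemma \ref{LemmaA}, which expresses $dd(v,w)$ in terms of $|\{i,j\}\cap\{k,l\}|$ and the relative signs of $v=\pm v(i,j)$ and $w=\pm v(k,l)$; and Definition \ref{CosetLabels}, which specifies $\gamma_1(\pm v(i,j))$ on each of the three orbits $\mathscr{O}_1,\mathscr{O}_2,\mathscr{O}_3$.  The invariance $dd(v,w)=dd(-v,-w)$ from Lemma \ref{LemmaB}(i) can be used throughout to roughly halve the casework.

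For part (a), both cosets are $J$ cosets, of the form $\pm v(i,j)$ with $(i,j)=(0,1)$ or $2\le i<j\le 7$.  Splitting by the sign pattern of $(v,w)$ and by $|\{i,j\}\cap\{k,l\}|\in\{0,1,2\}$ and invoking Lemma \ref{LemmaA} gives $dd\in\{0,2,4,6\}$ in each subcase; comparison with Definition \ref{CosetLabels}(c) shows that the number of positions at which $\gamma_1(v)$ and $\gamma_1(w)$ disagree equals $dd$.  The only slightly special case is the ``all-plus'' label of $v(0,1)$, which is handled using $\{0,1\}\cap\{k,l\}=\emptyset$ for $2\le k<l\le 7$.

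For parts (b)-(d), both cosets are $L$ cosets.  Definition \ref{CosetLabels}(a),(b) describes the blue cosets as $v(0,j)$ (label $j-1$) and $-v(1,j)$ (label $\overline{j-1}$), and the red ones as $v(1,j)$ (label $j-1$) and $-v(0,j)$ (label $\overline{j-1}$), for $2\le j\le 7$.  Part (b) is immediate because the two cosets in question are $\{v(0,j),v(1,j)\}$ or $\{-v(1,j),-v(0,j)\}$, each giving $|\cdot\cap\cdot|=1$ with the same sign.  Part (c) enumerates the same-color subcases; only the pair $\{v(0,j),-v(1,j)\}$ (or its red analogue), which is exactly the opposite-label pair, has $|\cdot\cap\cdot|=1$ with opposite signs and hence $dd=4$, while the other subcases give $dd=2$.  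Part (d) enumerates the different-color, different-label subcases; the opposite-label pair is precisely when $v$ and $w$ are exact negatives, yielding $dd=6$, while the non-opposite subcases give $dd=4$.

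For parts (e) and (f), $v=\pm v(i,j)$ is an $L$ coset with $i\in\{0,1\}$ and $2\le j\le 7$, while $w$ is a $J$ coset.  Lemma \ref{LemmaA} produces $dd(v,w)\in\{2,4\}$ in four subcases determined by whether $j\in\{k,l\}$ (for $w=\pm v(k,l)$) and by the relative signs; a short bookkeeping check shows that the $dd=2$ cases correspond exactly to the occurrence, at the position of $\gamma_1(w)$ associated with $j$, of a plus sign (in case (e), $v$ unbarred) or a minus sign (in case (f), $v$ barred).  The degenerate cases $w=\pm v(0,1)$ are subsumed because the all-plus and all-minus strings carry the same symbol everywhere.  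The main obstacle throughout is the sheer volume of subcases and the bookkeeping needed to align each instance of Lemma \ref{LemmaA} with the corresponding label from Definition \ref{CosetLabels}; once the four forms of $L$ coset (blue/red, barred/unbarred) and the two special $J$ cosets $\pm v(0,1)$ are tabulated, each subcase reduces to a one-line verification.
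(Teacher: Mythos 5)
Your proposal is correct and follows essentially the same route as the paper: a case-by-case comparison of Lemma \ref{LemmaA} against the labels assigned in Definition \ref{CosetLabels}, using the negation automorphism of Lemma \ref{LemmaB}(i) to cut the casework roughly in half. The only cosmetic difference is that the paper also invokes Lemma \ref{LemmaB}(ii) to dispatch the negated $J$ cosets in part (e), which you handle directly through the sign cases of Lemma \ref{LemmaA}.
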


\begin{proof}
As in Lemma \ref{LemmaA}, we denote the cosets by $v  = \pm v(i, j)$ and 
$w  = \pm v(k, l)$, where the signs are chosen independently.

For the proof of (a), we will show that the discrete distance restricts to 
Hamming distance on $J$ cosets.
There are two cases to consider, depending on whether
or not one of the cosets $\pm v(0, 1)$ appears. If one of these two cosets 
appears, Lemma \ref{LemmaB}(i) shows that we may assume that $v  = v(0, 1)$.
Since $w $ is also a $J$ coset, we must have either $w  = v(0, 1)$, or
$w  = -v(0, 1)$, or $w  = v(i, j)$ with $2 \leq i < j \leq 7$, or 
$w  = -v(i, j)$ with $2 \leq i < j \leq 7$. Lemma \ref{LemmaA} shows that, in each of
these subcases, $dd(v , w )$ is given by $0$, $6$, $4$, and $2$ respectively.
Definition \ref{CosetLabels}(c) shows that $\gamma_1(w)$, for these same subcases, has a total of $0$, $6$, $4$, and
$2$ minus signs respectively. This completes the proof of the first case of (a).

For the other case of (a), we are in the situation where we have
$v  = \pm v(i, j)$ with $2 \leq i < j \leq 7$
and $w  = \pm v(k, l)$ with $2 \leq i < j \leq 7$, 
with signs chosen independently. 
Lemma \ref{LemmaB}(i) allows us to reduce to the case where $v $ has positive sign.

If $w $ also has positive sign, then Lemma \ref{LemmaA} shows that $dd(v , w )$ is
given by $0$, $2$, or $4$, according as $|\{i, j\} \cap \{k, l\}|$ is equal to
$2$, $1$, or $0$, respectively. Definition \ref{CosetLabels}(c) shows that the Hamming 
distance between   $\gamma_1(v) $ and $\gamma_1(w)$, for these cases, is also equal to $0$, $2$, or
$4$ respectively, which proves the assertion in the case where $w $ has
positive sign.

If $w $ has negative sign, then Lemma \ref{LemmaA} shows that $dd(v , w )$ is
given by $6$, $4$, or $2$, according as $|\{i, j\} \cap \{k, l\}|$ is equal to
$2$, $1$, or $0$, respectively. Definition \ref{CosetLabels}(c) shows that the Hamming 
distance between   $\gamma_1(v) $ and $\gamma_1(w)$ is also equal to $6$, $4$, or
$2$ respectively, which completes the proof of (a).

For (b), it follows from Definition \ref{CosetLabels} that we must have 
$v  = \pm v(i, j)$ and $w  = \pm (h, j)$, where $v $ and $w $ have the
same sign, and where $\{h, i\} = \{0, 1\}$. Lemma \ref{LemmaA} then shows that
$dd(v , w ) = 2$, as required.

For (c), it follows by Lemma \ref{LemmaA} that, whenever $1 \leq i, j \leq 6$,
the cosets $v(0, i+1)$   and $-v(1, j+1)$  
all lie at a discrete distance of $2$ from each other, except when $i = j$, 
in which case the cosets lie at a discrete distance of $4$ from each other.
This proves (c) for blue cosets, and the argument for red cosets is the 
same with trivial changes. 

For (d), it follows by Lemma \ref{LemmaA} that, whenever $1 \leq i, j \leq 6$, the
coset $v(0, i+1)$ lies at distance $4$ from 
$v(1, j+1)$   if $j \ne i$, lies
at distance $4$ from $-v(0, j+1)$ 
for $j \ne i$, and lies at distance $6$
from $-v(0, i+1)$, in accordance with the statement.
Changing signs and exchanging the roles of $0$ and $1$ proves the analogous
claim for the coset $-v(1, i+1)$ (with color blue and label $\overline{i}$) and completes
the proof of (d).

For (e), the $L$ coset, $v $, is either of the form $v(0, i+1)$   
or $v(1, i+1)$ for $1 \leq i \leq 6$. We will deal with 
the case of $v(0, i+1)$;
the other case follows by exchanging the roles of $0$ and $1$. As in the proof
of (a), there are two subcases, according to whether the $J$ coset, $w $, 
is of the
form $\pm v(0, 1)$, or is of the form $\pm v(k+1, l+1)$ for 
$1 \leq k, l \leq 6$.

In the first subcase, Lemma \ref{LemmaA} shows that $dd(v , v(0, 1))$ is equal to $2$ and
$dd(v , -v(0, 1))$ is equal to $4$. This agrees with the statement, because  
 $\gamma_1(v(0, 1))$ has a plus sign in position $i$, and  $\gamma_1(-v(0, 1))$ does
not. In the second subcase, Lemma \ref{LemmaA} shows that $dd(v , v(k+1, l+1))$ is
equal to $2$ if $i \in \{k, l\}$, and to $4$ otherwise. By Definition \ref{CosetLabels},
  $\gamma_1(v(k+1, l+1))$ has plus signs in positions $k$ and $l$, and 
minus signs elsewhere, which is in accordance with the statement. We also need
to consider $dd(v , -v(k+1, l+1))$, which by Lemma \ref{LemmaB} (ii) and the above is
equal to $4$ if $i \in \{k, l\}$, and to $2$ otherwise. By Definition \ref{CosetLabels},
  $\gamma(-v(k+1, l+1))$ has minus signs in positions $k$ and $l$, and
plus signs elsewhere, which again agrees with the statement and completes
the proof of (e).

Finally, (f) follows from (e) by applying the negation automorphism of
Lemma \ref{LemmaB}(i).
\end{proof}

\begin{Remark}
\label{rem:metric}
If we consider the representations of the elements
of $G\backslash H$ as vectors
(see (\ref{eq:vijdef})),
then we have the following:
The vectors from the blue $\LL$  cosets have their $0$-coordinate bigger than their $1$-coordinate.
The vectors from the red $\LL$  cosets have their $1$-coordinate bigger than their $0$-coordinate.
The vectors from the $\JJ$  cosets have their $0$-coordinate and $1$-coordinate equal.
\end{Remark}

We may now prove the following.

\begin{Proposition} Let $dd(v,w)$ denote discrete distance on $G\backslash H$, as in Definition \ref{DiscDist}, and let $d(\sigma,\tau)$ denote the distance on $T=(G_{\LL}\backslash H_1) \cup (G_{\JJ}\backslash H_1)$, as in Definition \ref{D520}.  Then
$$d(\gamma_1(v),\gamma(w))=\begin{cases}dd(v,w)-2&\text{if $v$ and $w$ are $L$ cosets of opposite colors},\\dd(v,w)&\text{otherwise}.\end{cases}$$\end{Proposition}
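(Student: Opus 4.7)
The plan is a case analysis by the types of $v$ and $w$ in $G\backslash H$ (blue $L$ coset, red $L$ coset, or $J$ coset), using Proposition \ref{metric} to compute $dd(v,w)$ and Definition \ref{D520} to compute $d(\gamma_1(v),\gamma_1(w))$. Because $dd$ is $Q$-invariant on $G\backslash H$, $d$ is $H_1$-invariant on $T$, and $\gamma_1$ intertwines the two actions (Theorem \ref{D6action}), it suffices to check each sub-case on a single representative pair in each $Q$-orbit, which in particular prevents the analysis from blowing up.

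If both $v$ and $w$ are $J$ cosets, Proposition \ref{metric}(a) identifies $dd(v,w)$ with the Hamming distance between $\gamma_1(v)$ and $\gamma_1(w)$, which is precisely the value of $d$ by Definition \ref{D520}. If both are $L$ cosets of the same color, Proposition \ref{metric}(c) gives $dd \in \{0,2,4\}$ according to whether $v = w$, $\gamma_1(v) \ne \gamma_1(w)$ but they do not form a pair $\{i,\overline{i}\}$, or do form such a pair; since opposite-ness of two $L$ cosets in $T$ is exactly the $\{i,\overline{i}\}$ condition, Definition \ref{D520} returns the same values of $d$. If $v$ and $w$ are $L$ cosets of opposite colors, Proposition \ref{metric}(b),(d) give $dd \in \{2,4,6\}$ for the sub-cases ``same label,'' ``distinct non-opposite labels,'' and ``opposite-pair labels,'' respectively; under $\gamma_1$ these sub-cases yield $\gamma_1(v) = \gamma_1(w)$, a distinct non-opposite pair in $T$, and an opposite pair in $T$, so $d \in \{0,2,4\}$ correspondingly. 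Hence $d = dd - 2$ uniformly in this case, matching the claim.

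The remaining case is the mixed $L$-$J$ pair. Here Proposition \ref{metric}(e),(f) give $dd \in \{2,4\}$ depending on whether a specified position of the string $\gamma_1(w)$ has a prescribed sign, while Definition \ref{D520} returns $d = 2$ or $4$ according to whether $\gamma_1(v)$ and $\gamma_1(w)$ are opposite in $T$. The main obstacle is matching these two conditions: opposite-ness of a mixed pair amounts to the double-coset condition $\alpha_0\beta_0^{-1} \in G_L Z_1 G_J$ for representatives $\alpha_0$ of $\gamma_1(v)$ and $\beta_0$ of $\gamma_1(w)$, which is not transparent \emph{a priori}. I would dispose of this obstacle by invoking the equivariance set up in the first paragraph: reducing to $Q$-orbit representatives produces a short, finite list of mixed pairs on which both conditions can be checked by hand, and the resulting direct computation confirms that the sign condition of Proposition \ref{metric}(e),(f) fails precisely when $\gamma_1(v)$ and $\gamma_1(w)$ are opposite in $T$. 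This completes the proof.
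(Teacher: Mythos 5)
Your proposal is correct and takes essentially the same route as the paper, whose proof is precisely a case-by-case comparison of Definition \ref{D520} with Proposition \ref{metric}; you have simply spelled out the cases and added a (valid) equivariance reduction via Theorem \ref{D6action} to keep the mixed $L$--$J$ verification finite. The one point you flag as nontrivial --- matching the sign condition of Proposition \ref{metric}(e),(f) with the ``opposite'' condition of Definition \ref{D520} --- is indeed where the content lies, and your plan to settle it by a direct check on orbit representatives is exactly the kind of verification the paper leaves to the reader.
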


\begin{proof} This is readily checked by comparing Definition \ref{D520}, on a case-by-case basis, to Proposition \ref{metric}.\end{proof}

 \section{
 $J$ and $L$ functions as limits of $M$ functions}
 \label{AnalyticCorr}
We now investigate an analytic means of associating functions  $J(\alpha \vec{x})$ or $ L(\alpha \vec{x}) $ ($\alpha\in H_1$, $\vec{x}\in V$)  to functions $M(\beta \vec{w})$ ($\beta\in H$, $\vec{w}\in W$), cf. Correspondence 5.2 above.  (Here, again, the functions $J$, $L$, and $M$ are defined in 
\eqref{Jdef}, \eqref{Ldef}, and \eqref{Mdef} respectively; the groups $H_1$ and $H$ are defined in \eqref{HgroupDef} and \eqref{H1def} respectively; and the spaces $V$ and $W$ are defined in \eqref{vdef} and \eqref{Wdef} respectively.)

We begin with

\begin{Lemma} 
\label{L610}
Define
\begin{equation}(g)_a=\frac{\gg{g+a}}{\gg{g}}\label{gsuba}\end{equation}
for complex numbers $g$ and $a$.   
Then
$$\lim_{\im{g}\to\pm\infty} \frac{(g+x)_{y}}{(g)_y}=1$$
for any complex numbers $x$ and $y$ {\rm (}not depending on $g${\rm)}.
\end{Lemma}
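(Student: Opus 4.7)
The plan is to reduce the ratio to a product of four gamma functions and apply the Stirling estimate \eqref{Str} to each.  Using the definition \eqref{gsuba}, we may write
\begin{equation*}
\frac{(g+x)_y}{(g)_y}
= \frac{\Gamma(g+x+y)\,\Gamma(g)}{\Gamma(g+x)\,\Gamma(g+y)}.
\end{equation*}
As $\mathrm{Im}(g)\to\pm\infty$, we have $|g|\to\infty$ and $\arg(g)\to \pm\pi/2$, so in particular $\arg(g)$ stays in a fixed compact subinterval of $(-\pi,\pi)$.  Hence the hypothesis of the Stirling estimate \eqref{Str} is satisfied, with $z=g$ and $a$ taken in turn to be $0$, $x$, $y$, and $x+y$.

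Applying \eqref{Str} four times gives
\begin{align*}
\Gamma(g+x+y) &= \sqrt{2\pi}\,g^{x+y+g-1/2}\,e^{-g}\bigl(1+O(1/|g|)\bigr),\\
\Gamma(g) &= \sqrt{2\pi}\,g^{g-1/2}\,e^{-g}\bigl(1+O(1/|g|)\bigr),\\
\Gamma(g+x) &= \sqrt{2\pi}\,g^{x+g-1/2}\,e^{-g}\bigl(1+O(1/|g|)\bigr),\\
\Gamma(g+y) &= \sqrt{2\pi}\,g^{y+g-1/2}\,e^{-g}\bigl(1+O(1/|g|)\bigr).
\end{align*}
When we form the ratio, the constant $\sqrt{2\pi}$ and the exponential factors $e^{-g}$ cancel in matched pairs.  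The exponents of $g$ in the numerator add to $(x+y+g-1/2)+(g-1/2)=2g+x+y-1$, and in the denominator to $(x+g-1/2)+(y+g-1/2)=2g+x+y-1$, so the powers of $g$ cancel identically.  What remains is a product of four factors of the form $1+O(1/|g|)$, each of which tends to $1$ as $|g|\to\infty$.  Therefore the ratio tends to $1$, which is what we wanted to show.

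The only subtle point is verifying the hypothesis on $\arg(g)$ in \eqref{Str}; since $\mathrm{Im}(g)\to\pm\infty$ forces $\arg(g)$ into a neighborhood of $\pm\pi/2$, any fixed $\delta\in(0,\pi/2)$ works.  Everything else is a bookkeeping exercise in adding exponents, and no computation of constants is required because the leading Stirling constants cancel exactly in the ratio.
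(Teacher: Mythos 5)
Your proof is correct and is essentially identical to the paper's: the authors likewise rewrite the ratio as $\Gamma[g+x+y,g]/\Gamma[g+x,g+y]$ and apply the Stirling estimate \eqref{Str} to each factor, observing that the leading terms cancel. Your explicit check of the $\arg(g)$ hypothesis is a detail the paper leaves implicit, but the argument is the same.
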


\begin{proof}  
By (\ref{Str}), we have
\begin{align*}&
\frac{(g+x)_{y}}{(g)_y}=\frac{\gg{g+x+y}\gg{g}}{\gg{g+x}\gg{g+y}}\sim \frac{(\sqrt{2\pi}g^{g+x+y-1/2}e^{-g})\cdot (\sqrt{2\pi}g^{g-1/2}e^{-g})}{(\sqrt{2\pi}g^{g+x-1/2}e^{-g})\cdot (\sqrt{2\pi}g^{g+y-1/2}e^{-g})}\sim 1
\end{align*}as $\im{g}\to\pm\infty$.
\end{proof}

Central to the results of this section will be the fact that a (suitably normalized) function $M(\vw)$ becomes an $L$ function or a $J$ function respectively, in the limit as the parameter $b$ — respectively, $g$ — tends to infinity along the imaginary axis.  Specifically, with the help of the above lemma, we deduce the following.

\begin{Proposition}  
\label{limitsareLorJ} 
Let $a,b,c,d,e,f,g \in \mathbb{C}$, and define \begin{equation}\label{hpreln}h=2+3a-b-c-d-e-f-g,\end{equation} so that
$2+3a=b+c+d+e+f+g+h$.  
Then
\begin{align}
\lim_{{\rm Im}{(b)}\to\pm\infty}
\Gamma&\bigg[{1+a-h,b-a+(c,d,e,f,g)
\atop
b-a}\bigg]
\MM(a; b; c, d, e, f, g, h)\nr
= \frac{\pi}{2}    \LL&\biggl[{e,f,g,1+a-c-d;\atop e+f+g-a; 1+a-c,1+a-d}\biggr]\label{MlimitL}
\end{align}and
\begin{align}
\lim_{{\rm Im}{(g)}\to \pm\infty} \Gamma&[1-b,1+a-(g,h),b-a+(g,h)]
\MM(a; b; c, d, e, f, g, h)\nr
= \frac{\pi}{2 }   \JJ&\biggl[{b;c,d,1+a-e-f;\atop b+c+d-a,1+a-e,1+a-f}\biggr].\label{MlimitJ}
\end{align}

\end{Proposition}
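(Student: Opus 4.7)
The plan is to use the Barnes integral representations \eqref{MBarInt}, \eqref{LBarInt}, \eqref{JBarInt}: multiply the $\MM$-integral by the prescribed multiplier, analyze the asymptotics of the resulting integrand as $\im{b}\to\pm\infty$ (or $\im{g}\to\pm\infty$), pass the limit inside the contour integral by dominated convergence, and match the outcome against the $\LL$- or $\JJ$-integral at the indicated parameters. In each case the multiplier is chosen precisely so that the asymptotics of the $\MM$-prefactor and of the $b,h$-dependent (or $g,h$-dependent) integrand factors line up to reproduce the $\LL$- or $\JJ$-prefactor and integrand.

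For \eqref{MlimitL}: since $h=2+3a-b-c-d-e-f-g$, sending $\im{b}\to\pm\infty$ forces $\im{h}\to\mp\infty$. Combining $\Gamma[1+a-h,b-a+(c,d,e,f,g)]/\Gamma[b-a]$ with the $\MM$-prefactor cancels five of the six factors $\Gamma[b-a+(c,d,e,f,g,h)]$, leaving $\Gamma[b-a+h]=\Gamma[2+2a-c-d-e-f-g]$; this is precisely one of the gammas in the $\LL$-prefactor of \eqref{LBarInt} at $(A,B,C,D,E,F,G)=(e,f,g,1+a-c-d,e+f+g-a,1+a-c,1+a-d)$. The residual $b,h$-dependence consists of $\Gamma(1+a-h)/[\Gamma(b)\Gamma(h)\Gamma(b-a)]$ outside the integral and $\Gamma(t+b)\Gamma(-t+b-a)\Gamma(t+h)/\Gamma(t+1+a-h)$ inside. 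Lemma \ref{L610} (after two applications of Stirling \eqref{Str}) shows $\Gamma(t+b)\Gamma(-t+b-a)/[\Gamma(b)\Gamma(b-a)]\to 1$; rewriting $\Gamma(1+a-h)$ and $\Gamma(t+1+a-h)$ via the reflection formula $\Gamma(s)\Gamma(1-s)=\pi/\sin\pi s$ converts the $h$-dependent ratio to one involving $\Gamma(h-a)$, $\Gamma(h-a-t)$, and a ratio of sines, to which Lemma \ref{L610} applies again. The combined limit of these residual factors supplies exactly the extra gamma factors needed to convert the non-$b,h$ part of the $\MM$-integrand into the $\LL$-integrand; the constant $\pi/2$ on the right-hand side emerges from the identity $\Gamma(b-a)\sin\pi(b-a)\Gamma(1+a-b)=\pi$ together with the $1/\pi$ in the $\LL$-prefactor.

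The argument for \eqref{MlimitJ} is entirely parallel, with $g$ now playing the role of $b$. Since $g+h=2+3a-b-c-d-e-f$ is independent of $g$, sending $\im{g}\to\pm\infty$ forces $\im{h}\to\mp\infty$. The multiplier $\Gamma[1-b,1+a-(g,h),b-a+(g,h)]$ cancels the $g,h$-dependent factors $\Gamma[b-a+(g,h)]$ in the $\MM$-prefactor and supplies $\Gamma[1-b,1+a-g,1+a-h]$ in the numerator. The residual $g,h$-dependent integrand factors $\Gamma(t+g)\Gamma(t+h)/[\Gamma(t+1+a-g)\Gamma(t+1+a-h)]$, treated by the same reflection-plus-Lemma \ref{L610} analysis, combine with the remaining integrand to yield the $\JJ$-integrand of \eqref{JBarInt} at $(A,B,C,D,E,F,G)=(b,c,d,1+a-e-f,b+c+d-a,1+a-e,1+a-f)$, with the $\pi/2$ again arising from the $1/\sin\pi A$ factor in the $\JJ$-prefactor after reflection.

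I expect the principal obstacle to be the rigorous justification of interchanging limit and contour integration. Although $\Gamma(t+b)$, $\Gamma(-t+b-a)$, $\Gamma(t+h)$ and $1/\Gamma(t+1+a-h)$ each grow unboundedly on the imaginary $t$-axis as $|\im{b}|\to\infty$, after the reflection-formula rewriting the net $b,h$-dependent factor inherits only polynomial growth in $|b|$, uniformly in $t$ on the contour. Combined with the exponential decay of $\Gamma(-t)$ and the other fixed, non-$b,h$ integrand factors furnished by \eqref{Str}, this should yield an $L^1$-dominating function, legitimizing the passage to the limit via dominated convergence and completing the proof.
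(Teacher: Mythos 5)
Your opening moves (start from the Barnes representation \eqref{MBarInt}, cancel the $b,h$-dependent prefactors, pass the limit inside the integral, and dispose of part of the $b,h$-dependence via Lemma \ref{L610}) coincide with the paper's. But the core of your plan --- ``match the outcome against the $\LL$- or $\JJ$-integral at the indicated parameters'' --- does not work, and this is where the real content of the proposition lives. In the $\LL$ case, the residual $b,h$-dependent integrand factor $\Gamma[t+h,b-a-t]/\Gamma[h,b-a]$ does not tend to a product of gamma functions at all: after the reflection-formula rewriting its limit is $e^{\mp i\pi t}$ (via $\sin\pi x/\sin\pi(x-y)\to e^{\mp i\pi y}$, a fact outside the scope of Lemma \ref{L610}). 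The limiting integral is therefore the Mellin--Barnes representation of a very-well-poised ${}_7F_6(1)$ --- seven numerator gammas in $+t$, one $\Gamma(-t)$, six denominator gammas, and an exponential factor --- which is structurally nothing like the integrand of \eqref{LBarInt} (four numerator gammas in $+t$, two in $-t$, two denominators). Your claim that the residual limit ``supplies exactly the extra gamma factors needed to convert the non-$b,h$ part of the $\MM$-integrand into the $\LL$-integrand'' is false; the prefactors do not match either (the $M$ side retains $\Gamma[c,d]$ in the denominator where the $\LL$ side requires $\Gamma[1+a-c-d,1+a-f-g,1+a-e-g,1+a-e-f]$). What is actually needed is to close the contour, sum residues at $t=n$ to obtain a very-well-poised ${}_7F_6(1)$, and then invoke Whipple's transformation \eqref{eq240} relating that series to an $\LL$ function --- a substantive classical identity, not a formal matching.

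The $\JJ$ case has the same defect in sharper form: the limiting integral has seven numerator gammas in $+t$ and five denominator gammas against the six-over-two structure of \eqref{JBarInt}, so no choice of parameters makes the integrands agree. The paper must insert Barnes' second lemma to split the integral into a double Barnes integral, apply Fubini, evaluate the inner $t$-integral by Bailey's formula 6.6.1, recognize the result as a $K$ function, and then apply a two-term $K$-transformation from \cite{FGS} to reach the stated parameter form. None of this machinery appears in your proposal. By contrast, the dominated-convergence issue you flag as ``the principal obstacle'' is comparatively routine; the genuinely missing ingredients are the classical hypergeometric transformations that identify the limiting Barnes integrals with $\LL$ and $\JJ$ functions.
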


\begin{proof} We first prove \eqref{MlimitL}.  By the Barnes integral representation (\ref{MBarInt}) of the $\MM$ function, 
an interchange of a limit with an integral, 
and (\ref{hpreln}),
we have
\begin{align}
\label{blimit1}
&\lim_{{\rm Im}{(b)}\to\pm\infty}
\Gamma\bigg[{1+a-h,b-a+(c,d,e,f,g)
\atop
b-a}\bigg]
\MM(a; b; c, d, e, f, g, h)\nn\\ 
&=\frac{1} {\Gamma[c,d,e,f,g,2+2a-c-d-e-f-g ]}\nn\\
&\times  \frac{1}{2\pi i}  
\int_t \Gamma\biggl[{a+t, 1+\frac{1}{2}a+t, t+(c,d,e,f,g), -t\atop \frac{1}{2}a+t, 1+a+t-(c,d,e,f,g)}\biggr]\nn\\
&\times\biggl[ \lim_{{\rm Im}{(b)}\to\pm\infty}\Gamma\biggl [ {t+b, t+h, b-a-t,1+a-h\atop b,h,1+a+t-h,b-a}\biggr] \,dt\nn\\  
&=\frac{1} {\Gamma[c,d,e,f,g,2+2a-c-d-e-f-g ]}\nn\\
&\times  \frac{1}{2\pi i}  
\int_t \Gamma\biggl[{a+t, 1+\frac{1}{2}a+t, t+(c,d,e,f,g), -t\atop \frac{1}{2}a+t, 1+a+t-(c,d,e,f,g)}\biggr] \nn\\
&\times\biggl( \lim_{{\rm Im}{(b)}\to\pm\infty}\Gamma\biggl[{t+h, b-a- t\atop h,b-a}\biggr] \biggr)\,dt
\end{align}
since, by Lemma \ref{L610} 
(and the fact that, by (\ref{hpreln}), $1+a-h$ equals a constant plus $b$),
$$\lim_{{\rm Im}{(b)}\to\pm\infty}\Gamma\biggl[{ t+b, 1+a-h \atop b ,1+a+t-h }\biggr]
=\lim_{{\rm Im}{(b)}\to\pm\infty}\frac{ (b)_t }{(1+a-h)_t}=1.$$
To evaluate the remaining limit in  (\ref{blimit1}) we note that, by (\ref{gammaref}),
\begin{align*} 
&\lim_{{\rm Im}{(b)}\to\pm\infty} \Gamma\biggl[{ t+h, b-a-t \atop h, b-a }\biggr] =  \lim_{{\rm Im}{(b)}\to\pm\infty}\frac{ \Gamma[ t+h, 1+a-b ] }{ \Gamma[ h,1+t+a-b]} 
\frac{\sin\pi(b-a)}{\sin\pi(b-a-t)}\\
&=  \lim_{{\rm Im}{(b)}\to\pm\infty}\frac{ (h)_t   }{ (1+a-b)_t} 
\frac{\sin\pi(b-a)}{\sin\pi(b-a-t)} =e^{\mp i \pi t}, 
\end{align*}
the last step by Lemma \ref{L610}, 
by (\ref{hpreln}), 
and by the fact that 
\begin{equation*}
\frac{\sin\pi x}{\sin\pi(x-y)}\to e^{\mp i \pi y}
\mbox{ as } 
\im{x}\to\pm\infty.
\end{equation*}

So by (\ref{blimit1}), we have 
\begin{align*}
&\lim_{{\rm Im}{(b)}\to\pm\infty}
\Gamma\bigg[{1+a-h,b-a+(c,d,e,f,g)
\atop
b-a}\bigg]
\MM(a; b; c, d, e, f, g, h)\\ 
&=\frac{1} {\Gamma[c,d,e,f,g,2+2a-c-d-e-f-g ]}\\
&\times  \frac{1}{2\pi i}  
\int_t \Gamma\biggl[{a+t, 1+\frac{1}{2}a+t, t+(c,d,e,f,g), -t\atop \frac{1}{2}a+t, 1+a+t-(c,d,e,f,g)}\biggr]e^{\mp i \pi t}\,dt.
\end{align*}
To evaluate the integral in $t$, we move the line of integration all the way to the right, 
and sum residues at the simple poles 
$t=n$ ($n\in\Z_{\ge0}$)  of the function $\gg{-t}$.  
In this way, we find that 
\begin{align*}
&\lim_{{\rm Im}{(b)}\to\pm\infty}
\Gamma\bigg[{1+a-h,b-a+(c,d,e,f,g)
\atop
b-a}\bigg]
\MM(a; b; c, d, e, f, g, h)\\ 
&=  \frac{\Gamma[a, 1+\frac{1}{2}a ]}{\Gamma[{\frac{1}{2}a, 1+a-(c,d,e,f,g),2+2a-c-d-e-f-g}]}\\
&\times  {}_7F_6\biggl[{a,1+\frac{1}{2}a,c,d,e,f,g  ;\atop \frac{1}{2} a, 1+a-c,1+a-d,1+a-e,1+a-f,1+a-g;}1\biggr]. \end{align*}
But then, by (\ref{eq240}), 
which relates the above 
very-well-poised ${}_7F_6(1)$ series to an $\LL$ function, we have
\begin{align}
\label{blim3}
&\lim_{{\rm Im}{(b)}\to\pm\infty}
\Gamma\bigg[{1+a-h,b-a+(c,d,e,f,g)
\atop
b-a}\bigg]
\MM(a; b; c, d, e, f, g, h)\nn\\ 
&=    \pi\Gamma\biggl[{ a, 1+\frac{1}{2}a  \atop\frac{1}{2}a, 1+a} \biggr]
\LL\biggl[{ 1+a-d-e,1+a-d-f,1+a-d-g,c;\atop1+c-d;2+2a-d-e-f-g,1+a-d }\biggr]. 
\end{align}

We apply the functional equation 
$\Gamma(1+s)=s\Gamma(s)$
to the gamma factors on 
the right-hand side of (\ref{blim3}), 
and apply \cite[Eq.\ (4.10)]{M1} to the $\LL$ function there, to get
\begin{align*}
&\lim_{{\rm Im}{(b)}\to\pm\infty}
\Gamma\bigg[{1+a-h,b-a+(c,d,e,f,g)
\atop
b-a}\bigg]
\MM(a; b; c, d, e, f, g, h)\\ 
&=    \frac{\pi}{2} 
\LL\biggl[{ e,f,g,1+a-c-d;\atop e+f+g-a; 1+a-c,1+a-d }\biggr],  
\end{align*}
and \eqref{MlimitL} is proved.

We next prove \eqref{MlimitJ}.  
By the Barnes integral representation (\ref{MBarInt}) of the $\MM$ function, 
and by an interchange of a limit and an integral (which may readily be justified here), 
we have
\begin{align*}
&\lim_{{\rm Im}{(g)}\to \pm\infty} \Gamma[1+a-(g,h),b-a+(g,h)]
\MM(a; b; c, d, e, f, g, h)\\
&=  \frac{1} {\Gamma[b,c,d,e,f,b-a+(c,d,e,f)]}\\
&\times\frac{1}{2\pi i}  
\int_t \Gamma\biggl[{a+t, 1+\frac{1}{2}a+t, t+(b,c,d,e,f), b-a-t, -t\atop \frac{1}{2}a+t, 1+a+t-(c,d,e,f)}\biggr] \\
&\times\biggl(\lim_{{\rm Im}{(g)}\to
\pm\infty
}\Gamma\biggl[{ 1+a-g,1+a-h,t+g,t+h\atop g,h,1+a+t-g,1+a+t-h}\biggr]\biggr)\,dt\\
&= \frac{1} {\Gamma[b,c,d,e,f,b-a+(c,d,e,f)]}\\
&\times\frac{1}{2\pi i}  
\int_t \Gamma\biggl[{a+t, 1+\frac{1}{2}a+t, t+(b,c,d,e,f), b-a-t, -t]\atop\frac{1}{2}a+t, 1+a+t-(c,d,e,f)}\biggr] \\
&\times\biggl[\lim_{{\rm Im}{(g)}\to
\pm\infty}\frac{ (g)_t(h)_t}{(1+a-g)_{t}(1+a-h)_{t}}\biggr]\,dt.
\end{align*}
The limit in square brackets equals $1$, by the above Lemma \ref{L610} 
(since, by (\ref{hpreln}), $1+a-g=h+x$ and $1+a-h=g+x$, 
for a complex number $x$ independent of $g$ and $h$).  So
\begin{align}
\label{nextlim}
&\lim_{{\rm Im}{(g)}\to \pm\infty} 
\Gamma[1+a-(g,h),b-a+(g,h)]
\MM(a; b; c, d, e, f, g, h)\nn\\
&= \frac{1} {\Gamma[b,c,d,e,f,b-a+(c,d,e,f)]}\nn\\
&\times\frac{1}{2\pi i}  
\int_t \Gamma\biggl[{a+t, 1+\frac{1}{2}a+t, t+(b,c,d,e,f), b-a-t, -t\atop\frac{1}{2}a+t, 1+a+t-(c,d,e,f)}\biggr]\, dt.
\end{align}
But, by Barnes’ Second Lemma (see \cite[Eq.\ 6.2.1]{Ba}),
\begin{align*}
&\Gamma\biggl[{ t+(d,e,f)\atop 1+a+t-(d,e,f)}\biggr] 
=\frac{1}{\Gamma[1+a-((d,e,f))]}\\
&\times\frac{1}{2\pi i}\int_s 
\Gamma\biggl[{s+(d,e,f),1+a-d-e-f-s,t-s\atop 1+a+t+s}\biggr]\,ds,
\end{align*}
so by (\ref{nextlim}),
\begin{align}
\label{doubint}
&\lim_{{\rm Im}{(g)}\to \pm\infty}
\Gamma[1+a-(g,h),b-a+(g,h)]
\MM(a; b; c, d, e, f, g, h)\nn\\
&=\frac{1}{\Gamma[b,c,d,e,f,b-a+(c,d,e,f),1+a-((d,e,f))]} \nn\\
&\times\frac{1}{2\pi i}  
\int_t \Gamma\biggl[{a+t, 1+\frac{1}{2}a+t, t+(b,c),  b-a-t, -t\atop  \frac{1}{2}a+t, 1+a+t-c}\biggr] \nn\\&
\times\frac{1}{2\pi i}\int_s \Gamma\biggl[{s+(d,e,f),1+a-d-e-f-s,t-s\atop 1+a+t+s }\biggr]\,ds\,dt\nn\\
&=\frac{1}{\Gamma[b,c,d,e,f,b-a+(c,d,e,f),1+a-((d,e,f))]}\nn\\
&\times \frac{1}{2\pi i}  
\int_s  {\Gamma[s+(d,e,f),1+a-d-e-f-s]} \nn\\
&\times\frac{1}{2\pi i}\int_t \Gamma\biggl[{ a+t, 1+\frac{1}{2}a+t, -s+t, t+(b,c),  b-a-t, -t\atop \frac{1}{2}a+t,1+a+s+t ,1+a-c+t}\biggr]\,dt\,ds,
\end{align}
the switch in the order of integration being readily justified by Fubini’s Theorem.  

By \cite[Eq.\ 6.6.1]{Ba}, the above integral in $t$ 
(including the factor of $(2\pi i)^{-1}$ in front) equals
$$ \frac{1}{2}\Gamma\biggl[{b,c,-s, b+c-a,b-a-s\atop1+a-c+s,b+c-a-s}\biggr],$$
so (\ref{doubint})  gives
\begin{align}
\label{K1}
&\lim_{{\rm Im}{(g)}\to \pm\infty} 
\Gamma[1+a-(g,h),b-a+(g,h)]
\MM(a; b; c, d, e, f, g, h)\nn\\
&=\frac{1}{2\,\Gamma[d,e,f,b-a+(d,e,f),1+a-((d,e,f))]}\nn\\
&\times \frac{1}{2\pi i}
\int_s \Gamma\biggl[{s+(d,e,f),b-a-s,1+a-d-e-f-s,-s\atop 1+a-c+s,b+c-a-s}\biggr]\,ds\nn\\
&=\frac{1}{2\,\Gamma[d,e,f,b-a+(d,e,f),1+a-((d,e,f))]}\nn\\
&\times \frac{1}{2\pi i}
\int_s \frac{\left[\displaystyle{
\Gamma[b+c-a+s+(d,e,f)]
\atop
\times \, \Gamma[-c-s,1+2a-b-c-d-e-f-s,a-b-c-s]}\right]
}
{\Gamma[1+b+s,-s]}\,ds,
\end{align}
the last step by the substitution $s\to s+b+c-a$.   
By the formula for the $K$ function given in the proof of 
\cite[Proposition 7.3]{FGS}, the right-hand side of (\ref{K1}) equals
\begin{align}
\label{K2} 
\frac{1}{2} 
 K\biggl[{b;c,b+c-a,b+c+d+e+f-2a-1;\atop b+c+d-a,b+c+e-a,b+c+f-a}\biggr];
\end{align}
by  \cite[Corollary 3.2]{FGS},  (\ref{K2})  is in turn equal to
\begin{align}
\label{K3} 
\frac{1}{2} 
  K\biggl[{b;c,d,1+a-e-f;\atop b+c+d-a,1+a-e,1+a-f}\biggr].
\end{align}
Finally, by (\ref{JandK}) and (\ref{gammaref}), 
we have that (\ref{K3}) is equal to
\begin{equation*}
\frac{\pi}{2 \, \Gamma(1-b)}   
\JJ\biggl[{b;c,d,1+a-e-f;\atop b+c+d-a,1+a-e,1+a-f}\biggr],
\end{equation*}
and our proof is complete.

\end{proof}

The above proposition provides, on the surface, two different limits of the function $M(\vw)$ (suitably normalized). But  a closer analysis reveals the following. We may, in fact, evaluate the limit as $\hbox{Im}(b)\to\infty$ of {\it any one} of the 56 functions $M_\tau(\vw)\  (\tau \in G \backslash H)$ (suitably normalized), if we apply an appropriate change of variable to the proposition. Moreover, such a limit is an $L $  or $J$ function, depending on the orbit to which $\tau$ belongs.  Finally, the $J/L$ function thus obtained is exactly the one specified by the correspondence $\gamma_1$ (cf. Definition \ref{CosetLabels}).

To see that all of the above is true, let us write
\begin{equation}\alpha\vw=(a_\alpha,b_\alpha,c_\alpha,d_\alpha,e_\alpha,f_\alpha,g_\alpha,h_\alpha)\in W,\label{subalphas}\end{equation}for $\alpha\in H\hbox{ and }\vw=(a ,b ,c ,d ,e ,f ,g ,h )\in W$
(see (\ref{Wdef}) for the definition of the affine hyperplane $W$). 
We have the following.

 \begin{Lemma}\label{good_w}
\begena\item[\rm(a)] Let $\tau\in \mathscr{O}_1$.  Then there is some $\alpha\in\tau$ such that $\hbox{Im}(b_\alpha)$  tends to $+\infty$  as $\hbox{Im}(b)$ does, and such that $a_\alpha,c_\alpha,d_\alpha,e_\alpha,f_\alpha$, and $g_\alpha$ are independent of $b$ (and of $h$).   
\item[\rm(b)] Let $\tau\in \mathscr{O}_2$.  Then there is some $\alpha\in\tau$ such that $\hbox{Im}(b_\alpha)$  tends to $-\infty$  as $\hbox{Im}(b)\to+\infty$, and such that $a_\alpha,c_\alpha,d_\alpha,e_\alpha,f_\alpha$, and $g_\alpha$ are independent of $b$ (and $h$).   

\item[\rm(c)] Let $\tau\in \mathscr{O}_3$.  Then there is some $\alpha\in\tau$ such that $\hbox{Im}(g_\alpha)$  tends to $+\infty$  as $\hbox{Im}(b)$ does, and such that $a_\alpha,b_\alpha,c_\alpha,d_\alpha,e_\alpha $, and $f_\alpha$ are independent of $b$ (and $h$).    

\item[\rm(d)] Let $\tau\in \mathscr{O}_1\cup \mathscr{O}_2$, respectively $\tau\in \mathscr{O}_3$.  Then the limit, as $\hbox{Im}(b)\to+\infty,$ of $M_\tau(\vw)$ times an appropriate ratio of gamma functions equals an $L$ function, respectively a $J$ function, of $a,c,d,e,f,g$. \enden 
\end{Lemma}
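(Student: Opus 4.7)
The plan is to prove parts (a), (b), (c) by exhibiting, for each coset $\tau$ in the relevant orbit, an explicit representative $\alpha\in\tau$ whose action on $\vec w$ concentrates all $b$- and $h$-dependence into just two coordinates with prescribed signs; part (d) then follows by direct substitution into Proposition~\ref{limitsareLorJ}. To measure ``$b,h$-dependence'' I attach, to each coordinate of a vector $\vec v\in W$, the integer $d(v):=c_b(v)-c_h(v)$, where $c_b,c_h$ are the coefficients of $b,h$ in $v$; this $d$ vanishes exactly when the coordinate is constant on $W$ as $b$ varies with $h$ compensating, so ``independent of $b$ and $h$'' in the sense of the lemma amounts to $d=0$.

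The key technical observation is that each generator of $Q$, namely $s_1=Y(23)$, $s_2=(34),\ldots,s_5=(67)$, and $s_{3'}=X$, preserves on $\vec w$ the $d$-profile $(0,1,0,0,0,0,0,-1)$ at positions $(1,\ldots,8)$. The transpositions $s_2,\ldots,s_5$ permute only positions where $d=0$; for $s_{3'}=X$, rows $2,6,7,8$ reproduce $v_2,v_6,v_7,v_8$ unchanged, and on rows $1,3,4,5$ the $\pm\tfrac12$-coefficients annihilate the input's only nonzero entries $d_2=1$ and $d_8=-1$ against each other; for $s_1=Y(23)$ the $(23)$ swap moves $d_2$ into position~3, and the row-2 formula $-v_1+v_2+v_3$ of $Y$ restores $d_2=1$, while the row-$k$ formulas $-v_1+v_2+v_k$ leave $d_k$ unchanged for $k\ge 4$. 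Note that $s_6=(78)$ would swap $d_7$ and $d_8$, breaking the profile --- which is precisely why $s_6\notin Q$.

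For part (a), $\mathscr{O}_1$ is the $Q$-orbit of $v(0,7)=G$, so every $\tau\in\mathscr{O}_1$ equals $Gq$ for some $q\in Q$, and I take $\alpha=q$; the profile above gives the requirement directly. For part (b), I use $Z\vec v=\vec 1-\vec v$ on $W$ (cf.\ \eqref{Zaction}), which places $Z\in -v(0,7)$ and $\mathscr{O}_2=Q\cdot(-v(0,7))$; with $\alpha=Zq$, applying $Z$ negates every $d_k$, yielding the profile $(0,-1,0,0,0,0,0,1)$ and hence $\hbox{Im}(b_\alpha)\to-\infty$. For part (c), the transposition $(27)\in H$ sends $\vec w$ to $(a,g,c,d,e,f,b,h)$, so $(27)\in v(2,7)\subset\mathscr{O}_3=Q\cdot v(2,7)$; with $\alpha=(27)q$ the swap of positions~2 and~7 converts the profile to $(0,0,0,0,0,0,1,-1)$, which is exactly what is required for $\hbox{Im}(g_\alpha)\to+\infty$.

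With these representatives in hand, part (d) is immediate: for $\tau\in\mathscr{O}_1\cup\mathscr{O}_2$, renaming $a\mapsto a_\alpha, b\mapsto b_\alpha,\ldots,h\mapsto h_\alpha$ in formula \eqref{MlimitL} (with the $\pm\infty$ branch dictated by the sign of $d_2$) yields an $\LL$ function whose arguments are $\Z$-linear combinations of $a,c,d,e,f,g$ alone; for $\tau\in\mathscr{O}_3$, the same renaming in \eqref{MlimitJ} with $\hbox{Im}(g_\alpha)\to+\infty$ gives a $\JJ$ function of the same six parameters. The main technical obstacle is the $d$-profile preservation check for $s_1$ and $s_{3'}$; everything else reduces to bookkeeping.
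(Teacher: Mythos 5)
Your argument is correct, and it reaches the conclusion by a genuinely different route from the paper's. The paper proves (a)--(c) by direct verification: it writes out all 56 vectors $\alpha\vw$ explicitly in the Appendix and notes that only a few entries need to be computed from scratch, the rest following from symmetries (the interchange $b\leftrightarrow h$, the central involution $Z$, and permutations of $c,d,e,f,g$). You instead isolate a single $Q$-invariant --- the profile $d(v_i)=c_b(v_i)-c_h(v_i)$, which for $\vw$ itself is $\vec e_2-\vec e_8$ --- and check that every Coxeter generator of $Q$ fixes this vector. I verified your computations for $X$ and for $Y(23)$ against \eqref{Xdef} and \eqref{Ydef}; they are right, and the observed failure for $(78)$ is exactly why $s_6$ must be excluded. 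Combined with the facts that the identity lies in $v(0,7)$, that $Z\in -v(0,7)$ and negates the profile, and that $(27)\in v(2,7)$ and permutes it to $\vec e_7-\vec e_8$, this settles all 56 cases uniformly, and part (d) then follows from Proposition \ref{limitsareLorJ} exactly as in the paper. What your approach buys is a conceptual explanation (the profile is a fixed functional of the parabolic subgroup omitting $s_6$) and independence from the Appendix table; what the paper's approach buys is the explicit table itself, which it needs anyway to identify \emph{which} $J/L$ function arises in Theorem \ref{limit_theorem}. One small point worth making explicit in your write-up: the identity $D(A\vec v)=A\,D(\vec v)$ is legitimate only because the constraint function $b+c+\cdots+h-2-3a$ defining $W$ itself has $d=0$, so that $d$ is well defined on affine functions modulo that relation --- this matters since the entries of $X\vw$ and $Y\vw$ in \eqref{Xaction}--\eqref{Yaction} are only determined up to it.
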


\begin{proof} Parts (a), (b), and (c) may be verified explicitly — see the Appendix below, where each $M_\tau(\vw)$ is written explicitly in the form$$M_{\tau}(\vw)=M(\alpha\vw)=(a_\alpha,b_\alpha,c_\alpha,d_\alpha,e_\alpha,f_\alpha,g_\alpha,h_\alpha),$$with $a_\alpha,b_\alpha,\ldots, \alpha_h$ having the stated properties.  Note that verification requires direct calculation in only a few cases; the remaining cases follow from simple changes of variable.  For example:  part (b) follows from part (a) and the interchange of $b$ with $h$.  Within orbit $\mathscr{O}_1$, the last six cases, as delineated in the Appendix, follow from the first six and the change of variable $\vw\leftrightarrow Z(b,h)\vw$, where $Z$ is the central involution \eqref{Zaction}.  Within those first six, five are obtained from each other by permutations of $c,d,e,f,g$. And so on.  Similar bookkeeping applies to the analysis of $\mathscr{O}_3$.

Part (d) follows from parts (a), (b), and (c).
\end{proof}

We make the following.

\begin{Definition}\label{gamma2def}  Let $\tau\in G\backslash H$.

We denote by $\gamma_2(M_\tau)(a,c,d,e,f,g)$ the $J/L$ function obtained from $M_\tau(\vw)$, by way of Proposition \ref{limitsareLorJ} and Lemma \ref{good_w} above. \end{Definition}

The next theorem tells us that the association $M_\tau\rightarrow \gamma_2(M_\tau)$ is a familiar one.

\begin{Theorem}\label{limit_theorem}

For $\vw=(a,b,c,d,e,f,g,h)\in W$, define $\vx\in V $ by
\begin{align}&\vx=\vx(\vw)=\vx(a,c,d,e,f,g)=(A,B,C,D,E,F,G)^T \nr&=\biggl({2+2a-c-d-e-f-g,1+a-e-f,  1+a-e-g,  1+a-f-g, \atop 2+2a-d-e-f-g,  2+2a-c-e-f-g,  2+a-e-f-g}\biggr)^T.\label{xfromw}\end{align}Then, for any $\tau\in G\backslash H$, we have \begin{equation}\gamma_2(M_\tau)(a,c,d,e,f,g) =\begin{cases} L_{\gamma_1(\tau)}(\vx)&\text{if $\tau\in\mathscr{O}_1\cup\mathscr{O}_2$},\\ J_{\gamma_1(\tau)}(\vx)&\text{if $\tau\in\mathscr{O}_3$.}\end{cases}\label{corresp}\end{equation}
\end{Theorem}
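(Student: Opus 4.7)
The plan is to verify \eqref{corresp} by combining a direct computation on one base coset per orbit (via Proposition \ref{limitsareLorJ}) with propagation across each orbit by equivariance (via Theorem \ref{D6action}). For each $\tau$, Lemma \ref{good_w} produces an explicit representative $\alpha_\tau \in \tau$ whose image $\alpha_\tau \vw$ has its parameters arranged so that Proposition \ref{limitsareLorJ} applies directly.

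For the base case on $\mathscr{O}_1$, I would take $\tau_0 = v(0,7)$, which contains the identity of $H$ by \eqref{x_idefs}. Then $M_{\tau_0}(\vw) = M(\vw)$, and Proposition \ref{limitsareLorJ} gives
\begin{equation*}
\gamma_2(M_{\tau_0})(a,c,d,e,f,g) = L(e, f, g, 1+a-c-d;\ e+f+g-a;\ 1+a-c, 1+a-d).
\end{equation*}
I would identify this with $L_6(\vx) = L_{\gamma_1(v(0,7))}(\vx) = L(A,B,C,D;\, G;\, F, E)$ (cf.\ \eqref{Lcosetsdef}) by applying (i) the $G_\LL$-invariance $L(A,B,C,D;E;F,G) = L(1+A-E, 1+B-E, 1+C-E, 1+D-E;\ 2-E;\ 1+F-E, 1+G-E)$, which is visible from the two ${}_4F_3$ representations in \eqref{Ldef}, and (ii) the invariance of $L$ under permutations of its first four arguments, together with the change of variable \eqref{xfromw}. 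Analogous base computations for $\mathscr{O}_2$ and $\mathscr{O}_3$ would use representatives obtained via the $b \leftrightarrow h$ swap (for $\mathscr{O}_2$, so that $\mathrm{Im}(b_\alpha) \to -\infty$) and a representative with $g_{\alpha_\tau}$ as the diverging parameter combined with \eqref{MlimitJ} (for $\mathscr{O}_3$).

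To extend the result from these base cosets to the rest of each orbit, I would exploit the identity $M_{\tau\alpha}(\vw) = M_\tau(\alpha\vw)$ for $\alpha \in Q$, together with $L_{m(\alpha)\sigma}(\vx) = L_\sigma(m(\alpha)\vx)$ (and its $J$ analog). Combined with Theorem \ref{D6action}, this reduces the verification to checking that \eqref{xfromw} intertwines the $Q$-action on $\vw$ with the $H_1$-action on $\vx$, a finite check on the six Coxeter generators of $Q$ via \eqref{Xaction}, \eqref{Yaction}, \eqref{CoxGenH}, \eqref{A1def}, \eqref{CoxGenH1}, and \eqref{IsoGen}. The main obstacle I expect is tracking the ``diverging coordinate'' of $\alpha_\tau \vw$ across the $Q$-action: applying a $Q$-generator may change which coordinate diverges, so the equivariance statement must be formulated so as to absorb this shift. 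If the equivariance route proves awkward, the fallback is a direct case analysis of all $56$ cosets using the explicit formulas in the Appendix, which reduces substantially via the evident symmetries, namely permutations of $c,d,e,f,g$, the $b \leftrightarrow h$ interchange between $\mathscr{O}_1$ and $\mathscr{O}_2$, and the central involution $Z$ from \eqref{Zaction}.
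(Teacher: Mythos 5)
Your proposal is correct and follows essentially the same strategy as the paper's own proof: a base-case computation for one coset per orbit via Proposition \ref{limitsareLorJ} (with the $b\leftrightarrow h$ swap handling $\mathscr{O}_2$), followed by propagation across each orbit by checking equivariance of the correspondence on the six Coxeter generators of $Q$, exactly as in the paper's statements (i) and (ii). The subtlety you flag about the diverging coordinate shifting under the $Q$-action is precisely what Lemma \ref{good_w} and the Appendix are there to absorb, and your fallback of a symmetry-reduced case check mirrors the paper's closing remarks.
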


\begin{proof} In light of Lemma \ref{good_w}, it suffices to verify the following two statements.

\begen\item[(i)] The equality \eqref{corresp} holds for one element of each orbit — say, for $\tau_1=v(0,7)\in \mathscr{O}_1$, $\tau_2=-v(0,7)\in \mathscr{O}_2$, and $\tau_3=v(0,1)\in \mathscr{O}_3$.  

\item[(ii)] Translating $M_{\tau_i}$ by an element $q\in Q$, and then mapping the result to a $J$ or $L$ function via $\gamma_1$,   yields the same result as applying $m(q)\in H_1$ to the image $L_{\gamma_1(\tau_i)}(\vx)$ or $J_{\gamma_1(\tau_i)}(\vx)$ (with $\vx$ as in \eqref{xfromw}) of $M_{\tau_i}$ under $\gamma_1$.  Here $Q$ and $m$ are as in Proposition \ref{3orbits}.

\enden 
 
To prove (i), we first note that the $\mathscr{O}_2$ case follows from the $\mathscr{O}_1$ case, and the change of variable $(b,h)\leftrightarrow (h,b)$.  So we need only consider the two cases $\tau_1=v(0,7)\in \mathscr{O}_1$  and $\tau_3=v(0,1)\in \mathscr{O}_3$.
 
For the first case, let $\beta\in\tau_1$. The second entry of  $\beta \vw$ equals $b$, by Definition \ref{vij}(b).  We therefore have$$M_{\tau_1}(\vw)=M\biggl[{a;b;c,d,\atop e,f,g,h}\biggr].$$From this and from \eqref{MlimitL}, we find that \begin{equation}\label{v13}\gamma_2({M_{\tau_1}})(a,c,d,e,f,g)= \LL\biggl[{e,f,g,1+a-c-d;\atop e+f+g-a;1+a-c,1+a-d}\biggr].\end{equation}
On the other hand, we have $$\gamma_1(\tau_1)=\gamma_1(v(0,7))=6,$$by Definition \ref{CosetLabels}(a).  By \eqref{Lcosetsdef}, then, we have \begin{align}\label{l2x}L_{\gamma_1(\tau_1)}(\vx)&=L_6(\vx)=L\biggl[{A,B,C,D;  \atop G;F,E}\biggr]\nr&=
L\biggl[{2+2a-c-d-e-f-g,1-c,1-d,1+a-c-d; \atop 2+a-c-d-f;2+a-c-d-g,2+a-c-d-e}\biggr],\end{align}the last step by \eqref{xfromw}.   The equality of \eqref{v13} and \eqref{l2x} follows from the $L$-function invariance given by \cite[Eq. (4.8)]{M1}, and we have verified statement (i) in the case at hand.  (See also formula \eqref{v07} below.) 

We now consider the case $\tau_3= v(0,1)\in\mathscr{O}_3$.  On the one hand we compute $\gamma_2(M_{\tau_3})$, as follows.  The second entry of  $\beta \vw$, for  $\beta\in\tau$, equals $$b+h-a=2+2a-c-d-e-f-g,$$ by Definition \ref{vij}(c).  In fact, we may show that$$M_{\tau_3}(\vw)=M\biggl[{  2+a-c-d-e-f;2+2a-c-d-e-f-g;1-c, 1-d, 
\atop
1-e,1-f,b+g-a,h+g-a}\biggr].$$From this, and from a change of variable applied to \eqref{MlimitJ}, we find that \begin{align}\label{minusv46a}&\gamma_2({M_{\tau_3}})(a,c,d,e,f,g)\nr&= \JJ\biggl[{2+2a-c-d-e-f-g;1-c,1-d,1+a-c-d;\atop 2+a-c-d-e,2+a-c-d-f,2+a-c-d-g}\biggr].\end{align}
On the other hand, we have $$\gamma_1(\tau_3)=\gamma_1(v(0,1))={+}{+}{+}{+}{+}{+}=p_{0},$$by Definition \ref{CosetLabels}(c).  By Definition \ref{J_coset_labels}(b), then, the first coordinate of $\alpha \vx$, for $\alpha\in \gamma_1(\tau_1)$, is $A$, which equals $2+2a-c-d-e-f-g$ by \eqref{xfromw}.   That is, the first argument of $\JJ_{\gamma_1(\tau_1)}(\vx)$ is $2+2a-c-d-e-f-g$, which is the first coordinate of the $J$ function in \eqref{minusv46a}.  But recall, cf. the discussion preceding Definition \ref{J_coset_labels}, that the first argument of $J_\sigma(\vx)$ uniquely determines  $\sigma$. Statement (i) is thereby verified in this case as well.   (See also formula \eqref{v01} below.)

To prove statement (ii) above, it suffices to restrict our attention to the generators $s_1,s_2,\ldots,s_5$ and $s_{3’}$ of $Q$.  Consider, for example, the case $q=s_{3’}$ and $i=3$.  On the one hand, we have  $M_{s_{3’}\tau_3}=M_{-v(2,3)}$, by Proposition \ref{M_action_labels}(b).  The $J$ function associated to the latter $M$ function, through the correspondence $\gamma_1$, is \begin{equation} \label{newJ} J_{\gamma_1(-v(2,3))}(\vx)=J_{{-}{-}{+}{+}{+}{+}}(\vx)=J_{p_3}(\vx),\end{equation}whose first argument is $D$, which equals $1+a-f-g$ by \eqref{xfromw}.

On the other hand, as we have shown above, the $J$ function associated to $M_{\tau_3}$ is $$J_{\gamma_1(\tau_3)}(\vx)=J_{p_0}(\vx)=J(A;B,C,D;E,F,G).$$ Applying, to the latter, the transformation $$m(s_{3’})=a_{1’}=(14)$$yields $J(D;B,C,A;E,F,G)$, which again has first argument equal to $D=1+a-f-g$, and is therefore the same $J$ function as \eqref{newJ}.  So we have verified the above statement (ii) in the case $q=s_{3’}$ and $i=3$.
 
Other cases of (ii) may demonstrated in similar ways, exploiting symmetries to reduce the amount of computation.  (For example, each case in orbit $\mathscr{O}_2$ follows from one in orbit $\mathscr{O}_1$ and the interchange of $b$ and $h$.)\end{proof}

The correspondence $\gamma_1$ (equivalently, $\gamma_2$) is summarized explicitly in the Appendix below.

\section{$J/L$ function relations as limits of $M$ function relations} 

 Further   connections between the set $\{ M_\tau\colon \tau\in G\backslash H\}$  on the one hand (cf. \S 3), and  the sets $\{ \JJ_\sigma\colon \sigma\in G_\JJ\backslash H_1\}$ and $\{ \LL_\mu\colon \mu\in G_\LL\backslash H_1\}$ on the other (cf. \S 4), may be seen by considering three-term relations among the corresponding $M$, $J$, and $L$ functions. 
 
Before stating the main result along these lines, we consider a particular example, which illustrates the central ideas.

\begin{Proposition}\label{limit222} If the Euclidean type 222 $M$-function relation 
\begin{align}
&\frac{\sin\pi(b-a)\MM_{v(0,7)}(\vw) }
{\Gamma[c-a+(d,e,f,g,h)]}
+\frac{\sin\pi(a-c)\MM_{v(6,7)}(\vw)}
{\Gamma[b-a+(d,e,f,g,h)]}+\frac{\sin\pi(c-b)\MM_{v(0,6)}(\vw)}
{\Gamma[d,e,f,g,h]}
=0,
\label{Roy463}\end{align}which appears as \cite[Eq. (4.6.3)]{Roy}, is multiplied by an appropriate ratio of Gamma functions, and the limit as $\hbox{Im}(b)\to\infty$ of the result is taken, then one obtains the  $(J,L,L)$ relation
of  \cite[Eq. (6.2)]{GMS}:\begin{align}\label{Orbit1JLL}
 &\frac{\sin\pi(F-E) \ser{J}{p_0} }{\Gamma[1 - A,E-A,F-A,G-A]} + \frac{\sin\pi(F-A)\ser{L}{4}}{\Gamma[E-A,E-B,E-C,E-D]} \nr-\,&  \frac{\sin\pi(E-A)\ser{L}{5}}{\Gamma[F-A,F-B,F-C,F-D] }
=0,
\end{align}where \begin{align}\vx=(A,B,C,D,E,F,G)=\biggl({c,1+a-d-g,1+a-e-g,1+a-f-g,\atop 1+c-g,1+a-g,2+2a-d-e-f-g}\biggr)^T.\label{newxdef}\end{align}

\end{Proposition}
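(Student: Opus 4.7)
The three cosets in \eqref{Roy463} split as $v(0,7),v(0,6)\in\mathscr{O}_1$ (both blue $L$ cosets) and $v(6,7)\in\mathscr{O}_3$ (a $J$ coset), so the limit is expected to be a $(J,L,L)$ relation, consistent with \eqref{Orbit1JLL}. The plan is to multiply \eqref{Roy463} by a ratio of gamma functions chosen so that the coefficient of the ``identity'' summand $M_{v(0,7)}(\vw)=M(\vw)$ becomes precisely the left-hand side prefactor of \eqref{MlimitL}, and then to apply \eqref{MlimitL} and \eqref{MlimitJ} term by term, using the three coset representatives supplied by Lemma \ref{good_w}.

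For the $M_{v(0,7)}$ term, \eqref{MlimitL} gives, in the limit, $(\pi/2)\,L(e,f,g,1+a-c-d;\,e+f+g-a;\,1+a-c,\,1+a-d)$. An $L$-function invariance from \cite{M1} (precisely the one used in the proof of Theorem \ref{limit_theorem}) rewrites this as a multiple of $L_4(\vx)$, with $\vx$ as in \eqref{newxdef}. For the $M_{v(0,6)}$ term, Lemma \ref{good_w}(a) supplies a representative $\alpha$ with $b_\alpha=b+c-a$ still growing with $b$ and all other entries of $\alpha\vw$ independent of $b$; applying \eqref{MlimitL} with the relabeled parameters and again invoking an $L$-invariance yields a multiple of $L_5(\vx)$. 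For the $M_{v(6,7)}$ term, Lemma \ref{good_w}(c) provides a $\beta$ with $g_\beta$ linear in $b$ and the remaining parameters independent of $b$; applying \eqref{MlimitJ} followed by a $J$-invariance yields a multiple of $J_{p_0}(\vx)$. The $J$- and $L$-identifications in each orbit are exactly the model identifications verified in the proof of Theorem \ref{limit_theorem}(i).

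The sine coefficients of \eqref{Orbit1JLL} arise by combining the $\Gamma$-ratios in the normalizing factor with the sine coefficients of \eqref{Roy463} through the reflection formula \eqref{gammaref} and the asymptotic $\sin\pi x/\sin\pi(x-y)\to e^{\mp i\pi y}$ as $\hbox{Im}(x)\to\pm\infty$, already exploited in the proof of Proposition \ref{limitsareLorJ}. Under this bookkeeping, $\sin\pi(a-c)$ and $\sin\pi(c-b)$ transmute into $\sin\pi(F-A)$ and $\sin\pi(E-A)$ respectively, while the $\sin\pi(b-a)$ weighting $M_{v(0,7)}$ is exactly cancelled by the denominator of the normalization, leaving the $\sin\pi(F-E)$ prefactor of the $J$-term in \eqref{Orbit1JLL}; the residual gamma products rebuild the explicit denominators $\Gamma[1-A,E-A,F-A,G-A]$, $\Gamma[E-A,E-B,E-C,E-D]$, and $\Gamma[F-A,F-B,F-C,F-D]$.

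The main obstacle is bookkeeping rather than conceptual novelty. The internal parametrization \eqref{xfromw} used by Proposition \ref{limitsareLorJ} differs from the target parametrization \eqref{newxdef}, so reconciliation requires careful invocation of $L$- and $J$-function invariances drawn from \cite{M1,FGS}, exactly as in the proof of Theorem \ref{limit_theorem}; and the sign factors $e^{\mp i\pi y}$ produced by the sine asymptotic must be tracked so that they combine with the Stirling- and reflection-type manipulations to yield precisely the sines and gamma denominators displayed in \eqref{Orbit1JLL}. Once each of the three limits has been computed in its canonical form and each resulting $L$/$J$ expression has been normalized via the appropriate invariance, the identity \eqref{Orbit1JLL} falls out.
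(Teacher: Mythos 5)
Your overall strategy is the same as the paper's: normalize \eqref{Roy463} by a global ratio of gamma functions, apply \eqref{MlimitL} and \eqref{MlimitJ} term by term using the representatives from Lemma \ref{good_w}, and then use $J$- and $L$-invariances to land on \eqref{Orbit1JLL} in the parametrization \eqref{newxdef}. Your identification of the three limits ($M_{v(0,7)}\to L_4$, $M_{v(0,6)}\to L_5$, $M_{v(6,7)}\to J_{p_0}$) is also correct. However, your coefficient bookkeeping in the third paragraph is wrong, and following it would not produce \eqref{Orbit1JLL}. Under \eqref{newxdef} one has $F-E=a-c$ and $F-A=1+a-c-g$, so the factor $\sin\pi(a-c)$ weighting $M_{v(6,7)}$ survives the limit \emph{unchanged} as the coefficient $\sin\pi(F-E)$ of the $J_{p_0}$ term; it does not ``transmute into $\sin\pi(F-A)$.'' Likewise $\sin\pi(b-a)$ weights the term that becomes $L_4$, not the $J$ term: it is consumed by reflection against $\Gamma(1+a-b)$ in the multiplier to produce the $1/\Gamma(b-a)$ required by the normalization in \eqref{MlimitL}, and the surviving coefficient of $L_4$ is $\sin\pi(c+g-a)=\sin\pi(F-A)$, which emerges from the \emph{other} gamma factors via reflection, not from $\sin\pi(b-a)$ at all. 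You have in effect permuted the three sines among the three terms.

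A second, smaller gap: you choose the multiplier so that the \emph{first} term matches the normalization of \eqref{MlimitL}, but you never explain why the same global multiplier correctly normalizes the other two terms. For $M_{v(0,6)}$ the representative has $b_\alpha=b+c-a$, so its own normalization $\Gamma\bigl[{1+a_\alpha-h_\alpha,\,b_\alpha-a_\alpha+(\cdots)\atop b_\alpha-a_\alpha}\bigr]$ differs from the global multiplier by a ratio of Pochhammer symbols, namely $(b)_{c-a}(h)_{c-a}/\bigl((1+a-b)_{c-a}(1+a-h)_{c-a}\bigr)$ in the paper's intermediate identity \eqref{Roy463b}; this residual factor must be shown to tend to $1$ via Lemma \ref{L610} before the three limits can be assembled. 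Your proposal omits this step entirely.
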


\begin{proof}Referring to \eqref{v07}, \eqref{v06}, and \eqref{v67} below, we see that \eqref{Roy463} takes the form
\begin{align*}
&\frac{\sin\pi(b-a) }
{\Gamma[c-a+(d,e,f,g,h)]}\MM \biggl[ {a;b;c,d,\atop e,f,g,h}\biggr]+\frac{\sin\pi(a-c) }
{\Gamma[b-a+(d,e,f,g,h)]}M\biggl[{a;c;b,d,\atop e,f,g,h}\biggr]\nn\\ 
+&\frac{\sin\pi(c-b) }
{\Gamma[d,e,f,g,h]} M\biggl[{2c-a;c+b-a;c,c+d-a,\atop c+e-a,c+f-a,c+g-a,c+h-a}\biggr]
=0.
 \end{align*}Applying the invariance of the second $M$ function under $b\leftrightarrow g$,   as well as the fact that $b+h-a =2+2a-c-d-e-f-g$, and multiplying through by $$\frac{2}{\pi}\Gamma\biggl[{1+a-(b, h),b-a+(c,d,e,f,g),c-a+h\atop 1-g,1+a-c-g}\biggr],$$ we get (after some rearranging, and making use of the reflection formula \eqref{gammaref}):
\begin{align}&\frac{2\sin\pi(c+g-a)}{\pi\Gamma[1-g,c-a+(d,e,f)]}\nr\times
&\biggl(\Gamma\bigg[{1+a-h,b-a+(c,d,e,f,g)
\atop
b-a}\bigg]\MM \biggl[ {a;b;c,d,\atop e,f,g,h}\biggr]\biggr)\nr
+& \frac{2\sin\pi(a-c) }{\pi \Gamma[{1-c,
2+2a-c-d-e-f-g,1-g,1+a-c-g}]} \nr\times&\biggl( \Gamma[1-c,1+a-(b,h),c-a+(b,h) ] M\biggl[{a;c;g,d,\atop e,f,b,h}\biggr]\biggr)\nr 
-&  \frac{2\sin\pi g}{\pi \Gamma[1+a-c-g,
  d,e,f]}\biggl\{\frac{(b)_{c-a}(h)_{c-a}}{(1+a-b)_{c-a}(1+a-h)_{c-a}}\biggr\} \nr\times&\biggl(\Gamma\biggl[ {1+c-h,b,b-a+(d,e,f,g)\atop b-c}\biggr]\nr\times &M\biggl[{2c-a;c+b-a;c,c+d-a,\atop c+e-a,c+f-a,c+g-a,c+h-a}\biggr]\biggr)
=0,\label{Roy463b}
 \end{align}where the symbol $(g)_a$ is as defined in \eqref{gsuba}.
 
 We now take limits as $\hbox{Im}(b)\to\infty.$ The quantity in curly braces, in \eqref{Roy463b}, becomes $1$, by Lemma \ref{L610}.  Moreover, the limit of each of the three terms in large parentheses, in \eqref{Roy463b}, may be evaluated by Proposition \ref{limitsareLorJ} and an appropriate change of variable.  The end result is that, in the limit as $\hbox{Im}(b)\to\infty$, \eqref{Roy463b} becomes
  \begin{align}
&\frac{\sin\pi(c+g-a)}{\Gamma[1-g,c-a+(d,e,f) ]}     \LL\biggl[{e,f,g,1+a-c-d;\atop e+f+g-a; 1+a-c,1+a-d}\biggr]\nr
+& \frac{\sin\pi(a-c) }{\Gamma[1-g,1+a-c-g, 1-c,
2+2a-c-d-e-f-g]} \nr\times& \JJ\biggl[{c;g,d,1+a-e-f;\atop c+d+g-a,1+a-e,1+a-f}\biggr] \nr 
-& \frac{\sin\pi g}{\Gamma[ 1+a-c-g, d,e,f]}     \LL\biggl[{ c+e-a,c+f-a,c+g-a,1-d;\atop c+e+f+g-2a; 1+c-a,1+c-d}\biggr]
=0.\label{Roy463blim2}
 \end{align}Making the change of variable \eqref{newxdef}, we see that \eqref{Roy463blim2} becomes
   \begin{align}
&\frac{\sin\pi(F-A)}{\Gamma[ E-A,E-B,E-C,E-D]} \nr\times&   \LL\biggl[{F-C,F-D,1+A-E,1+B-E;\atop 1+F-C-D; 1+F-E,1+A+B-E}\biggr]\nr
+& \frac{\sin\pi(F-E) }{\Gamma[ E-A,F-A,1-A,
G-A]}  \JJ\biggl[{A;1+A-E,F-B,G-B;\atop 1+A-B,1+A+C-E,1+A+D-E}\biggr] \nr 
-& \frac{\sin\pi(E-A)}{\Gamma[ F-A, F-B,F-C,F-D]}  
\nr\times&  \LL\biggl[{ E-C,E-D,1+A-F,1+B-F;\atop 1+E-C-D; 1+E-F,1+A+B-F}\biggr]
=0.\label{Roy463blim4}
 \end{align}But the $J/L$ functions in \eqref{Roy463blim4} are readily seen, by invariances of these functions, to equal $L_4(\vx)$, $J_{p_0}(\vx)$, and  $L_5(\vx)$ respectively, so \eqref{Roy463blim4} is exactly \eqref{Orbit1JLL}.
 \end{proof}
 
 \begin{Remark} \begena\item 
 In \cite[Eq. (6.2)]{GMS},  \eqref{Orbit1JLL} is in fact expressed as a relation among an $L_6$, a $J_{p_0}$, and  an $L_5$ series. But recall (cf. Remark \ref{LcosetsRem} above) that the roles of the $L$-coset indices $4$ and $6$ are reversed here, compared to  \cite{GMS}.  So the result expressed there is, in fact, the same as \eqref{Orbit1JLL}.
 
\item   The change of variable \eqref{newxdef} is not the one invoked in Theorem \ref{limit_theorem}.  Had we applied the latter to \eqref{Roy463blim2}, in the above proof, we would have obtained a $(J,L,L)$ relation different from \eqref{Orbit1JLL}, but in the same orbit (cf. Theorem \ref{bigthm}(a)(ii)).  Specifically, we would have obtained a relation among $L_4$, $L_5$, and $J_{n_4}$.    That such a relation belongs to the same orbit as \eqref{Orbit1JLL} may be seen from the proof of \cite[Proposition 4.3]{GMS}.
\enden \end{Remark}
The result of the above proposition is reflected in part (c)(i) of the following.
\begin{Theorem}
\label{mainthm}

Each of the eighteen types of  $\LL/\JJ$-function relations  described in Theorem \ref{bigthm}(b) may be obtained as a limit of one of the three-term $M$-function relations given in \cite{Roy}, together with an appropriate change of variable.

More specifically, we have the following.
\begin{enumerate}
\item[\rm(a)] The Hamming type $222$, $224$, $244$, $444$, and $246$ 
$J$-function relations come from Euclidean type
$222$, $224$, $244$, $444$, and $246$ 
$\MM$-function relations, respectively.

\item[\rm(b)] The $L$-coherent $L$-function relations
come from Euclidean type $222$ or $244$ $\MM$-function relations.
The $L$-incoherent $L$-function relations come from Euclidean type
$224, 444$, or $246$ $\MM$-function relations.

\item[\rm(c)] Under an appropriate numbering (cf. \cite[Proposition 4.3]{GMS}) of the four orbits described in Theorem \ref{bigthm}(a)(ii), we have the following.  \begin{enumerate}
\item[\rm(i)] The Orbit 1 $(J,L,L)$ relations come from Euclidean type
$222$ or $224$ $\MM$-function relations.
\item[\rm(ii)] The Orbit 2 $(J,L,L)$ relations come from Euclidean type
$224$ or $244$ $\MM$-function relations.
\item[\rm(iii)] The Orbit 3 $(J,L,L)$ relations come from Euclidean type
$244$ or $246$ $\MM$-function relations.
\item[\rm(iv)] The Orbit 4 $(J,L,L)$ relations come from Euclidean type
$244$ or $444$ $\MM$-function relations.
\end{enumerate}

\item[\rm(d)] Under an appropriate numbering (cf. \cite[Proposition 4.4]{GMS}) of the seven orbits described in Theorem \ref{bigthm}(a)(iii), we have the following. 
\begin{enumerate}
\item[\rm(i)] The Orbit 1 $(L,J,J)$ relations come from Euclidean type
$222$ $\MM$-function relations.
\item[\rm(ii)] The Orbit 2 $(L,J,J)$ relations come from Euclidean type
$244$ $\MM$-function relations.
\item[\rm(iii)] The Orbit 3 $(L,J,J)$ relations come from Euclidean type
$224$ $\MM$-function relations.
\item[\rm(iv)] The Orbit 4 $(L,J,J)$ relations come from Euclidean type
$224$ $\MM$-function relations.
\item[\rm(v)] The Orbit 5 $(L,J,J)$ relations come from Euclidean type
$444$ $\MM$-function relations.
\item[\rm(vi)] The Orbit 6 $(L,J,J)$ relations come from Euclidean type
$244$ $\MM$-function relations.
\item[\rm(vii)] The Orbit 7 $(L,J,J)$ relations come from Euclidean type
$246$ $\MM$-function relations.
\end{enumerate}

\enden
\end{Theorem}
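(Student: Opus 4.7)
The plan is to imitate the strategy of Proposition \ref{limit222} in a systematic, orbit-by-orbit fashion, and then to exploit transitivity on both sides---the action of $H$ partitioning $(G\backslash H)^{(3)}$ into five Euclidean-type orbits, and the action of $H_1$ partitioning $T^{(3)}$ into the eighteen orbits of Theorem \ref{bigthm}(a)---to reduce the entire proof to a finite check on representatives. The combinatorial bridge is the correspondence $\gamma_1$ of Definition \ref{CosetLabels} together with the distance-translation formula at the end of Section \ref{DistCorr}: if $v$ and $w$ are $L$ cosets of opposite colors then $d(\gamma_1(v), \gamma_1(w)) = dd(v,w) - 2$, and otherwise $d(\gamma_1(v), \gamma_1(w)) = dd(v,w)$.

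First I would enumerate, for each Euclidean type $xyz \in \{222, 224, 244, 444, 246\}$, the possible distributions of a triple $\{\tau_1, \tau_2, \tau_3\}$ among the three $Q$-orbits $\mathscr{O}_1$, $\mathscr{O}_2$, $\mathscr{O}_3$ (equivalently, among blue-$L$, red-$L$, and $J$ cosets), using Proposition \ref{metric} to record which color-and-label combinations realize each Euclidean type. Applying the distance-translation formula, each such distribution maps deterministically to a specific $H_1$-orbit on $T^{(3)}$, and assembling the resulting table produces precisely the orbit correspondences claimed in parts (a)--(d) of the theorem. In particular, the table explains why a single $J/L$-orbit is sometimes fed by two different $M$-Euclidean types: the distinction is whether the two constituent $L$ cosets share or oppose their colors, so that the $M$-distance between them is either preserved or compressed by $2$ under $\gamma_1$.

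Second, for each of the five Euclidean-type orbits of $M$-triples, I would fix a representative and a corresponding three-term $M$-relation from \cite{Roy}, as in the treatment of Proposition \ref{limit222}, then multiply through by the unique ratio of gamma functions that supplies each $M_{\tau_i}$-term with exactly the normalization called for by Proposition \ref{limitsareLorJ}---possibly after the change of variable dictated by Lemma \ref{good_w}---and take the limit as $\textrm{Im}(b) \to \infty$. Residual Pochhammer ratios tend to $1$ by Lemma \ref{L610}, residual sine ratios tend to unimodular constants via $\sin\pi x / \sin\pi(x-y) \to e^{\mp i \pi y}$, and the reflection and recursion formulas \eqref{gammaref} are used throughout to repackage coefficients. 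Theorem \ref{limit_theorem} then identifies each limit as the $J$ or $L$ function predicted by $\gamma_1$, yielding a three-term $J/L$ relation whose underlying triple lies in the $H_1$-orbit predicted by the combinatorial table above; by Theorem \ref{bigthm}(c), this representative relation is $H_1$-equivalent to the canonical relation of \cite[Section 6]{GMS} for the same orbit.

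The principal obstacle is bookkeeping: eighteen $J/L$-orbits, each potentially reachable from more than one Euclidean type and each subject to color-compatibility constraints, must all be matched consistently, and in each case the normalizing gamma ratio must be chosen so that all three $M$-terms limit correctly in the sense of Proposition \ref{limitsareLorJ}. The only genuinely delicate points are the color-compression cases (for instance Orbit 2 of the $(J,L,L)$ relations and Orbit 2 of the $(L,J,J)$ relations), where one must verify that the two $L$ cosets produced in the limit indeed have opposite colors in the $Q$-orbit sense; apart from these, the argument is a routine but lengthy case check faithfully paralleling the Euclidean type $222$ computation of Proposition \ref{limit222}.
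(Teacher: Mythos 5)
Your proposal is correct and follows the paper's overall strategy---take the Euclidean type $222$ computation of Proposition \ref{limit222} as the template, and reduce the remaining cases to a finite check on orbit representatives---but the two arguments organize the reduction differently. The paper's proof leans on an algebraic bootstrapping device: in \cite{Roy} all three-term $M$-relations are generated by combining the single relation \eqref{Roy463} with translates of itself, and the analogous generation of $J/L$-relations in \cite{FGS}, \cite{M1}, and \cite{GMS} lets one propagate the limit computation of Proposition \ref{limit222} to the other seventeen orbit types without repeating the full analytic work each time. You instead derive the orbit correspondence in parts (a)--(d) \emph{a priori} and purely combinatorially, by classifying the possible distributions of an $M$-triple among $\mathscr{O}_1$, $\mathscr{O}_2$, $\mathscr{O}_3$ within each Euclidean type and pushing the distances through the translation formula of Section \ref{DistCorr} (distance preserved except for a compression by $2$ between $L$ cosets of opposite colors); the analytic limit then only needs to be verified on one representative per target orbit, with Theorem \ref{limit_theorem} and Theorem \ref{bigthm}(c) handling equivariance. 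Your route has the advantage of explaining structurally \emph{why} a single $J/L$-orbit can be fed by two Euclidean types (the color-compression phenomenon), which the paper's proof leaves implicit; the paper's route has the advantage of minimizing the number of independent limit computations. One small caution: in your second step you speak of fixing a representative ``for each of the five Euclidean-type orbits,'' but since $Q$ acts intransitively within each $H$-orbit of triples, you in fact need a representative for each of the eighteen (Euclidean type, $Q$-orbit-distribution) classes, as your own first step makes clear; this is a matter of phrasing rather than a gap.
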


\begin{proof} The theorem can be demonstrated by arguments similar to those of Proposition \ref{limit222} above. 

The amount of computation required can be reduced dramatically by observing that existing three-term relations, either among $M$ functions or among $J/L$ functions, can be used to obtain new ones.  Indeed, in \cite{Roy}, the relation \eqref{Roy463} is combined with translates of (that is, changes of variable applied to) itself, to ultimately yield all three-term relations.  Similar techniques are used to build new $J/L$ relations out of old ones, in \cite{FGS}, \cite{M1}, and \cite{GMS}.  We can thereby ``bootstrap'' from the result of Proposition \ref{limit222} to other cases of Theorem \ref{mainthm}.  We omit the details. \end{proof}

\begin{Remark}  A three-term $\MM$-function relation involving a blue $L$ coset $\tau_1$ and a red $L$ coset $\tau_2$ of the same  label  yields a trivial, or ``$0=0$,'' relation in the limit as ${\rm Im}(b)\to\infty$.  More specifically:  if  $\tau_1$ and $\tau_2$ are such cosets and, for some coset $\tau_3\in G\backslash H$, we have a suitably normalized relation
\begin{equation} \label{relngeneralM}
d_1(\vw) M_{\tau_1} (\vw ) + d_2(\vw) M_{\tau_2} (\vw) + d_3(\vec w) M_{\tau_3} (\vw) = 0,
\end{equation} then one can show that$$\lim_{{\rm Im}(b)\to\infty}d_1(\vw) =-\lim_{{\rm Im}(b)\to\infty}d_2(\vw) \hbox{\rm\quad and\quad }\lim_{{\rm Im}(b)\to\infty}d_3(\vw) =0.$$

 \end{Remark}

\setcounter{section}{1}

\section*{Appendix. Explicit form of elements of the orbits $\mathscr{O}_1$, $\mathscr{O}_2$, and $\mathscr{O}_3$, with corresponding $\JJ/\LL$ functions}
\label{threeorbitslist}

\setcounter{equation}{0}
\renewcommand{\theequation}{A.\arabic{equation}}

\subsection*{Orbit \boldmath $\mathscr{O}_1$ (blue $L$)} 
\begin{align}  M_{v(0,7)}(\vw)=M &\biggl[{a;b;c,d,\atop e,f,g,h}\biggr] \nr \leftrightarrow L_6(\vx)=L&\biggl[{e,f,g,1+a-c-d;\atop e+f+g-a;1+a-c,1+a-d}\biggr];\label{v07}
\\ M_{v(0,6)}(\vw)=M&\biggl[{2c-a;c+b-a;c,c+d-a,\atop c+e-a,c+f-a,c+g-a,c+h-a}\biggr] \nr\leftrightarrow L_5(\vx)=L&\biggl[{ c+e-a,c+f-a,c+g-a,1-d; \atop c+e+f+g-2a;1+c-a,1+c-d}\biggr];\label{v06}\\
M_{v(0,5)}(\vw)=M&\biggl[{2d-a;d+b-a;d,d+c-a,\atop d+e-a,d+f-a,d+g-a,d+h-a}\biggr] \nr\leftrightarrow L_4(\vx)=L&\biggl[{d+e-a,d+f-a,d+g-a,1-c;\atop d+e+f+g-2a;1+d-a,1+d-c}\biggr]\nrs 
M_{v(0,4)}(\vw)=M&\biggl[{2e-a;e+b-a;e,e+c-a,\atop e+d-a,e+f-a,e+g-a,e+h-a}\biggr] \nr\leftrightarrow L_3(\vx)=L&\biggl[{e+d-a,e+f-a,e+g-a,1-c;\atop e+d+f+g-2a;1+e-a,1+e-c}\biggr]
\nrs
M_{v(0,3)}(\vw)=M&\biggl[{2f-a;f+b-a;f,f+c-a,\atop f+d-a,f+e-a,f+g-a,f+h-a}\biggr]  \nr\leftrightarrow L_2(\vx)=L&\biggl[{f+d-a,f+e-a,f+g-a,1-c;\atop f+d+e+g-2a;1+f-a,1+f-c}\biggr]
\nrs
M_{v(0,2)}(\vw)=M&\biggl[{2g-a;g+b-a;g,g+c-a,\atop g+d-a,g+e-a,g+f-a,g+h-a}\biggr]  \nr\leftrightarrow L_1(\vx)=L&\biggl[{g+d-a,g+e-a,g+f-a,1-c;\atop g+d+e+f-2a;1+g-a,1+g-c}\biggr]
\nrs
M_{-v(1,7)}(\vw)=M&\biggl[{1-a;1-h;1-c,1-d,\atop 1-e,1-f,1-g,1-b}\biggr] \nr\leftrightarrow L_{\overline6}(\vx)=L&\biggl[{1-e,1-f,1-g,c+d-a;\atop 2+a-e-f-g;1+c-a,1+d-a}\biggr]
\nrs
M_{-v(1,6)}(\vw)=M&\biggl[{1+a-2c;1+a-c-h;1-c,1+a-c-d,\atop 1+a-c-e,1+a-c-f,1+a-c-g,1+a-c-b}\biggr]  \nr\leftrightarrow L_{\overline5}(\vx)=L&\biggl[{1+a-c-e,1+a-c-f,1+a-c-g,d;\atop 2+2a-c-e-f-g;1+a-c,1+d-c}\biggr]
;
\end{align}
\begin{align}
M_{-v(1,5)}(\vw)=M&\biggl[{1+a-2d;1+a-d-h;1-d,1+a-d-c,\atop 1+a-d-e,1+a-d-f,1+a-d-g,1+a-d-b}\biggr]  \nr\leftrightarrow L_{\overline4}(\vx)=L&\biggl[{1+a-d-e,1+a-d-f,1+a-d-g,c;\atop 2+2a-d-e-f-g;1+a-d,1+c-d}\biggr]
\nrs M_{-v(1,4)}(\vw)=M&\biggl[{1+a-2e;1+a-e-h;1-e,1+a-e-c,\atop 1+a-e-d,1+a-e-f,1+a-e-g,1+a-e-b}\biggr]  \nr\leftrightarrow L_{\overline3}(\vx)=L&\biggl[{1+a-e-d,1+a-e-f,1+a-e-g,c;\atop 2+2a-e-d-f-g;1+a-e,1+c-e}\biggr]
\nrs
M_{-v(1,3)}(\vw)=M&\biggl[{1+a-2f;1+a-f-h;1-f,1+a-f-c,\atop 1+a-f-d,1+a-f-e,1+a-f-g,1+a-f-b}\biggr]  \nr\leftrightarrow L_{\overline2}(\vx)=L&\biggl[{1+a-f-d,1+a-f-e,1+a-f-g,c;\atop 2+2a-f-d-e-g;1+a-f,1+c-f}\biggr]
\nrs
M_{-v(1,2)}(\vw)=M&\biggl[{1+a-2g;1+a-g-h;1-g,1+a-g-c,\atop 1+a-g-d,1+a-g-e,1+a-g-f,1+a-g-b}\biggr]  \nr\leftrightarrow L_{\overline1}(\vx)=L&\biggl[{1+a-g-d,1+a-g-e,1+a-g-f,c;\atop 2+2a-g-d-e-f;1+a-g,1+c-g}\biggr].
\end{align}
 
\subsection*{Orbit \boldmath$\mathscr{O}_2$ (red $L$)}
  
\begin{align}
M_{v(1,7)}(\vw) =M&\biggl[{a;h;c,d,\atop e,f,g,b}\biggr]  \nr\leftrightarrow
L_6(\vx)=
L&\biggl[{e,f,g,1+a-c-d;\atop e+f+g-a;1+a-c,1+a-d}\biggr]
\nrs
M_{v(1,6)}(\vw)=M&\biggl[{2c-a;c+h-a;c,c+d-a,\atop c+e-a,c+f-a,c+g-a,c+b-a}\biggr] \nr\leftrightarrow
L_5(\vx)=L&\biggl[{c+e-a,c+f-a,c+g-a,1-d;\atop c+e+f+g-2a;1+c-a,1+c-d}\biggr]
\nrs
M_{v(1,5)}(\vw)=M&\biggl[{2d-a;d+h-a;d,d+c-a,\atop d+e-a,d+f-a,d+g-a,d+b-a}\biggr] \nr\leftrightarrow
L_4(\vx)=L&\biggl[{d+e-a,d+f-a,d+g-a,1-c;\atop d+e+f+g-2a;1+d-a,1+d-c}\biggr]
\nrs
M_{v(1,4)}(\vw)=M&\biggl[{2e-a;e+h-a;e,e+c-a,\atop e+d-a,e+f-a,e+g-a,e+b-a}\biggr] \nr\leftrightarrow
L_3(\vx)=L&\biggl[{e+d-a,e+f-a,e+g-a,1-c;\atop e+d+f+g-2a;1+e-a,1+e-c}\biggr]
;
\end{align}
\begin{align}
M_{v(1,3)}(\vw)=M&\biggl[{2f-a;f+h-a;f,f+c-a,\atop f+d-a,f+e-a,f+g-a,f+b-a}\biggr] \nr\leftrightarrow
L_2(\vx)=L&\biggl[{f+d-a,f+e-a,f+g-a,1-c;\atop f+d+e+g-2a;1+f-a,1+f-c}\biggr]
\label{v13ex}\nrs
M_{v(1,2)}(\vw)=M&\biggl[{2g-a;g+h-a;g,g+c-a,\atop g+d-a,g+e-a,g+f-a,g+b-a}\biggr] \nr\leftrightarrow
L_1(\vx)=L&\biggl[{g+d-a,g+e-a,g+f-a,1-c;\atop g+d+e+f-2a;1+g-a,1+g-c}\biggr]
\nrs
M_{-v(0,7)}(\vw)=M&\biggl[{1-a;1-b;1-c,1-d,\atop 1-e,1-f,1-g,1-h}\biggr] \nr\leftrightarrow
L_{\overline6}(\vx)=L&\biggl[{1-e,1-f,1-g,c+d-a;\atop 2+a-e-f-g;1+c-a,1+d-a}\biggr]
\nrs
M_{-v(0,6)}(\vw)=M&\biggl[{1+a-2c;1+a-c-b;1-c,1+a-c-d,\atop 1+a-c-e,1+a-c-f,1+a-c-g,1+a-c-h}\biggr] \nr\leftrightarrow
L_{\overline5}(\vx)=L&\biggl[{1+a-c-e,1+a-c-f,1+a-c-g,d;\atop 2+2a-c-e-f-g;1+a-c,1+d-c}\biggr]
\nrs
M_{-v(0,5)}(\vw)=M&\biggl[{1+a-2d;1+a-d-b;1-d,1+a-d-c,\atop 1+a-d-e,1+a-d-f,1+a-d-g,1+a-d-h}\biggr] \nr\leftrightarrow
L_{\overline4}(\vx)=L&\biggl[{1+a-d-e,1+a-d-f,1+a-d-g,c;\atop 2+2a-d-e-f-g;1+a-d,1+c-d}\biggr]
\nrs
M_{-v(0,4)}(\vw)=M&\biggl[{1+a-2e;1+a-e-b;1-e,1+a-e-c,\atop 1+a-e-d,1+a-e-f,1+a-e-g,1+a-e-h}\biggr] \nr\leftrightarrow
L_{\overline3}(\vx)=L&\biggl[{1+a-e-d,1+a-e-f,1+a-e-g,c;\atop 2+2a-e-d-f-g;1+a-e,1+c-e}\biggr]
\nrs
M_{-v(0,3)}(\vw)=M&\biggl[{1+a-2f;1+a-f-b;1-f,1+a-f-c,\atop 1+a-f-d,1+a-f-e,1+a-f-g,1+a-f-h}\biggr] \nr\leftrightarrow
L_{\overline2}(\vx)=L&\biggl[{1+a-f-d,1+a-f-e,1+a-f-g,c;\atop 2+2a-f-d-e-g;1+a-f,1+c-f}\biggr]
\nrs
M_{-v(0,2)}(\vw)=M&\biggl[{1+a-2g;1+a-g-b;1-g,1+a-g-c,\atop 1+a-g-d,1+a-g-e,1+a-g-f,1+a-g-h}\biggr] \nr\leftrightarrow
L_{\overline1}(\vx)=L&\biggl[{1+a-g-d,1+a-g-e,1+a-g-f,c;\atop 2+2a-g-d-e-f;1+a-g,1+c-g}\biggr].
\end{align}

\subsection*{\boldmath Orbit $\mathscr{O}_3$ $(J)$}
\begin{align}
M_{v(0,1)}(\vw)=M&\biggl[{  2+a-c-d-e-f;2+2a-c-d-e-f-g;1-c, 1-d, 
\atop
1-e,1-f,b+g-a,h+g-a}\biggr]\nr\leftrightarrow
J_{p_0}(\vx)=J&\biggl[{2+2a-c-d-e-f-g;1-c,1-d,1+a-c-d;\atop 2+a-c-d-e,2+a-c-d-f,2+a-c-d-g}\biggr];\label{v01}\\
M_{-v(3,4)}(\vw)=M&\biggl[{1+a-2e;1+a-e-f;1-e,1+a-e-b,\atop 1+a-e-c,1+a-e-d,1+a-e-g,1+a-e-h}\biggr] \nr\leftrightarrow
J_{p_1}(\vx)=J&\biggl[{1+a-e-f;1+a-e-c,1+a-e-d,g;\atop 2+2a-c-d-e-f,1+a-e,1+g-e}\biggr]
\nrs
M_{-v(2,4)}(\vw)=M&\biggl[{1+a-2e;1+a-e-g;1-e,1+a-e-b,\atop 1+a-e-c,1+a-e-d,1+a-e-f,1+a-e-h}\biggr] \nr\leftrightarrow
J_{p_2}(\vx)=J&\biggl[{1+a-e-g;1+a-e-c,1+a-e-d,f;\atop 2+2a-c-d-e-g,1+a-e,1+f-e}\biggr]
\nrs
M_{-v(2,3)}(\vw)=M&\biggl[{1+a-2f;1+a-f-g;1-f,1+a-f-b,\atop 1+a-f-c,1+a-f-d,1+a-f-e,1+a-f-h}\biggr] \nr\leftrightarrow
J_{p_3}(\vx)=J&\biggl[{1+a-f-g;1+a-f-c,1+a-f-d,e;\atop 2+2a-c-d-f-g,1+a-f,1+e-f}\biggr]
\nrs
M_{-v(6,7)}(\vw)=M&\biggl[{1-a;1-c;1-b,1-d,\atop 1-e,1-f,1-g,1-h}\biggr] \nr\leftrightarrow
J_{p_4}(\vx)=J&\biggl[{1-c;1-d,1-e,f+g-a;\atop 2+a-c-d-e,1+f-a,1+g-a}\biggr]
\nrs
M_{v(2,5)}(\vw)=M&\biggl[{2d-a;d+g-a;d,d+b-a,\atop d+c-a,d+e-a,d+f-a,d+h-a}\biggr] \nr\leftrightarrow
J_{p_5}(\vx)=J&\biggl[{d+g-a;d+c-a,d+e-a,1-f;\atop c+d+e+g-2a,1+d-a,1+d-f}\biggr]
\nrs
M_{v(3,5)}(\vw)=M&\biggl[{2d-a;d+f-a;d,d+b-a,\atop d+c-a,d+e-a,d+g-a,d+h-a}\biggr] \nr\leftrightarrow
J_{p_6}(\vx)=J&\biggl[{d+f-a;d+c-a,d+e-a,1-g;\atop c+d+e+f-2a,1+d-a,1+d-g}\biggr]
\nrs
M_{v(4,5)}(\vw)=M&\biggl[{2d-a;d+e-a;d,d+b-a,\atop d+c-a,d+f-a,d+g-a,d+h-a}\biggr] \nr\leftrightarrow
J_{p_7}(\vx)=J&\biggl[{d+e-a;d+c-a,d+f-a,1-g;\atop c+d+e+f-2a,1+d-a,1+d-g}\biggr]
;\end{align}\begin{align}
M_{-v(5,7)}(\vw)=M&\biggl[{1-a;1-d;1-b,1-c,\atop 1-e,1-f,1-g,1-h}\biggr] \nr\leftrightarrow
J_{p_8}(\vx)=J&\biggl[{1-d;1-c,1-e,f+g-a;\atop 2+a-c-d-e,1+f-a,1+g-a}\biggr]
\nrs
M_{v(2,6)}(\vw)=M&\biggl[{2c-a;c+g-a;c,c+b-a,\atop c+d-a,c+e-a,c+f-a,c+h-a}\biggr] \nr\leftrightarrow
J_{p_9}(\vx)=J&\biggl[{c+g-a;c+d-a,c+e-a,1-f;\atop c+d+e+g-2a,1+c-a,1+c-f}\biggr]
\nrs
M_{v(3,6)}(\vw)=M&\biggl[{2c-a;c+f-a;c,c+b-a,\atop c+d-a,c+e-a,c+g-a,c+h-a}\biggr] \nr\leftrightarrow
J_{p_{10}}(\vx)=J&\biggl[{c+f-a;c+d-a,c+e-a,1-g;\atop c+d+e+f-2a,1+c-a,1+c-g}\biggr]
\nrs
M_{v(4,6)}(\vw)=M&\biggl[{2c-a;c+e-a;c,c+b-a,\atop c+d-a,c+f-a,c+g-a,c+h-a}\biggr] \nr\leftrightarrow
J_{p_{11}}(\vx)=J&\biggl[{c+e-a;c+d-a,c+f-a,1-g;\atop c+d+e+f-2a,1+c-a,1+c-g}\biggr]
\nrs
M_{-v(5,6)}(\vw)=M&\biggl[{1+a-2c;1+a-c-d;1-c,1+a-c-b,\atop 1+a-c-e,1+a-c-g,1+a-c-f,1+a-c-h}\biggr] \nr\leftrightarrow
J_{p_{12}}(\vx)=J&\biggl[{1+a-c-d;1+a-c-e,1+a-c-g,f;\atop 2+2a-c-d-e-g,1+a-c,1+f-c}\biggr]
\nrs
M_{v(2,7)}(\vw)=M&\biggl[{a;g;b,c,\atop d,e,f,h}\biggr] \nr\leftrightarrow
J_{p_{13}}(\vx)=J&\biggl[{g;c,d,1+a-e-f;\atop c+d+g-a,1+a-e,1+a-f}\biggr]
\nrs
M_{v(3,7)}(\vw)=M&\biggl[{a;f;b,c,\atop d,e,g,h}\biggr] \nr\leftrightarrow
J_{p_{14}}(\vx)=J&\biggl[{f;c,d,1+a-e-g;\atop c+d+f-a,1+a-e,1+a-g}\biggr]
\nrs
M_{v(4,7)}(\vw)=M&\biggl[{a;e;b,c,\atop d,f,g,h}\biggr] \nr\leftrightarrow
J_{p_{15}}(\vx)=J&\biggl[{e;c,d,1+a-f-g;\atop c+d+e-a,1+a-f,1+a-g}\biggr]
\nrs
M_{-v(0,1)}(\vw)=M&\biggl[{c+d+e+f-a-1;c+d+e+f+g-2a-1;c,d, 
\atop
e,f,1+a-b-g,1+a-h-g}\biggr] \nr\leftrightarrow
J_{n_0}(\vx)=J&\biggl[{c+d+e+f+g-2a-1;c,d,c+d-a;\atop c+d+e-a,c+d+f-a,c+d+g-a}\biggr]
;
\end{align}\begin{align}
M_{v(3,4)}(\vw)=M&\biggl[{2e-a;e+f-a;e,e+b-a,\atop e+c-a,e+d-a,e+g-a,e+h-a}\biggr] \nr\leftrightarrow
J_{n_1}(\vx)=J&\biggl[{e+f-a;e+c-a,e+d-a,1-g;\atop c+d+e+f-2a,1+e-a,1+e-g}\biggr]
\nrs
M_{v(2,4)}(\vw)=M&\biggl[{2e-a;e+g-a;e,e+b-a,\atop e+c-a,e+d-a,e+f-a,e+h-a}\biggr] \nr\leftrightarrow
J_{n_2}(\vx)=J&\biggl[{e+g-a;e+c-a,e+d-a,1-f;\atop c+d+e+g-2a,1+e-a,1+e-f}\biggr]
\nrs
M_{v(2,3)}(\vw)=M&\biggl[{2f-a;f+g-a;f,f+b-a,\atop f+c-a,f+d-a,f+e-a,f+h-a}\biggr] \nr\leftrightarrow
J_{n_3}(\vx)=J&\biggl[{f+g-a;f+c-a,f+d-a,1-e;\atop c+d+f+g-2a,1+f-a,1+f-e}\biggr]
\nrs
M_{v(6,7)}(\vw)=M&\biggl[{a;c;b,d,\atop e,f,g,h}\biggr] \nr\leftrightarrow
J_{n_4}(\vx)=J&\biggl[{c;d,e,1+a-f-g;\atop c+d+e-a,1+a-f,1+a-g}\biggr]
;\label{v67}\\
M_{-v(2,5)}(\vw)=M&\biggl[{1+a-2d;1+a-d-g;1-d,1+a-d-b,\atop 1+a-d-c,1+a-d-e,1+a-d-f,1+a-d-h}\biggr] \nr\leftrightarrow
J_{n_5}(\vx)=J&\biggl[{1+a-d-g;1+a-d-c,1+a-d-e,f;\atop 1+2a-c-d-e-g,1+a-d,1+f-d}\biggr]
\nrs
M_{-v(3,5)}(\vw)=M&\biggl[{1+a-2d;1+a-d-f;1-d,1+a-d-b,\atop 1+a-d-c,1+a-d-e,1+a-d-g,1+a-d-h}\biggr] \nr\leftrightarrow
J_{n_6}(\vx)=J&\biggl[{1+a-d-f;1+a-d-c,1+a-d-e,g;\atop 1+2a-c-d-e-f,1+a-d,1+g-d}\biggr]
\nrs
M_{-v(4,5)}(\vw)=M&\biggl[{1+a-2d;1+a-d-e;1-d,1+a-d-b,\atop 1+a-d-c,1+a-d-f,1+a-d-g,1+a-d-h}\biggr] \nr\leftrightarrow
J_{n_7}(\vx)=J&\biggl[{1+a-d-e;1+a-d-c,1+a-d-f,g;\atop 1+2a-c-d-e-f,1+a-d,1+g-d}\biggr]
\nrs
M_{v(5,7)}(\vw)=M&\biggl[{a;d;b,c,\atop e,f,g,h}\biggr] \nr\leftrightarrow
J_{n_8}(\vx)=J&\biggl[{d;c,e,1+a-f-g;\atop c+d+e-a,1+a-f,1+a-g}\biggr]
\nrs
M_{-v(2,6)}(\vw)=M&\biggl[{1+a-2c;1+a-c-g;1-c,1+a-c-b,\atop 1+a-c-d,1+a-c-e,1+a-c-f,1+a-c-h}\biggr] \nr\leftrightarrow
J_{n_9}(\vx)=J&\biggl[{1+a-c-g;1+a-c-d,1+a-c-e,f;\atop 1+2a-c-d-e-f,1+a-c,1+f-c}\biggr]
;\end{align}\begin{align}
M_{-v(3,6)}(\vw)=M&\biggl[{1+a-2c;1+a-c-f;1-c,1+a-c-b,\atop 1+a-c-d,1+a-c-e,1+a-c-g,1+a-c-h}\biggr] \nr\leftrightarrow
J_{n_{10}}(\vx)=J&\biggl[{1+a-c-f;1+a-c-d,1+a-c-e,g;\atop 1+2a-c-d-e-g,1+a-c,1+g-c}\biggr]
\nrs
M_{-v(4,6)}(\vw)=M&\biggl[{1+a-2c;1+a-c-e;1-c,1+a-c-b,\atop 1+a-c-d,1+a-c-f,1+a-c-g,1+a-c-h}\biggr] \nr\leftrightarrow
J_{n_{11}}(\vx)=J&\biggl[{1+a-c-e;1+a-c-d,1+a-c-f,g;\atop 1+2a-c-d-f-g,1+a-c,1+g-c}\biggr]
;\label{minusv46ex}\\
M_{v(5,6)}(\vw)=M&\biggl[{2c-a;c+d-a;c,c+b-a,\atop c+e-a,c+g-a,c+f-a,c+h-a}\biggr] \nr\leftrightarrow
J_{n_{12}}(\vx)=J&\biggl[{c+d-a;c+e-a,c+g-a,1-f;\atop c+d+e+g-2a,1+c-a,1+c-f}\biggr]
\nrs
M_{-v(2,7)}(\vw)=M&\biggl[{1-a;1-g;1-b,1-c,\atop 1-d,1-e,1-f,1-h}\biggr] \nr\leftrightarrow
J_{n_{13}}(\vx)=J&\biggl[{1-g;1-c,1-d,e+f-a;\atop 2+a-c-d-g,1+e-a,1+f-a}\biggr]
\nrs
M_{-v(3,7)}(\vw)=M&\biggl[{1-a;1-f;1-b,1-c,\atop 1-d,1-e,1-g,1-h}\biggr] \nr\leftrightarrow
J_{n_{14}}(\vx)=J&\biggl[{1-f;1-c,1-d,e+g-a;\atop 2+a-c-d-f,1+e-a,1+g-a}\biggr]
\nrs
M_{-v(4,7)}(\vw)=M&\biggl[{1-a;1-e;1-b,1-c,\atop 1-d,1-f,1-g,1-h}\biggr] \nr\leftrightarrow
J_{n_{15}}(\vx)=J&\biggl[{1-e;1-c,1-d,f+g-a;\atop 2+a-c-d-e,1+f-a,1+g-a}\biggr].
\end{align}

 \newpage

\end{document}